\definecolor{OmegaCol}{RGB}{255,185,110}   
\definecolor{SigmaCol}{RGB}{170,215,255}   
\definecolor{TauCol}{RGB}{95,120,85}
\theoremstyle{plain}
\newtheorem{teor}{Theorem}
\numberwithin{teor}{section}
\numberwithin{equation}{section}
\theoremstyle{definition}
\newaliascnt{defi}{teor}
\newtheorem{defi}[defi]{Definition}
\theoremstyle{plain}
\newaliascnt{lemma}{teor}
\newtheorem{lemma}[lemma]{Lemma}
\theoremstyle{plain}
\newaliascnt{prop}{teor}
\newtheorem{prop}[prop]{Proposition}
\theoremstyle{plain}
\newaliascnt{conjecture}{teor}
\theoremstyle{plain}
\newaliascnt{cor}{teor}
\newtheorem{cor}[cor]{Corollary}
\theoremstyle{definition}
\newaliascnt{ex}{teor}
\theoremstyle{definition}
\newaliascnt{oss}{teor}
\newtheorem{oss}[oss]{Remark}
\theoremstyle{plain}
\newcommand{\Hlabel}[1]{
  \hypertarget{H#1}{}
  \textnormal{\textcolor{magenta}{{H#1}}}
}
\newcommand{\Alabel}[1]{
  \hypertarget{C#1}{}
  \textnormal{\textcolor{magenta}{{C#1}}}
}
\newcommand{\Href}[1]{\hyperlink{#1}{\textnormal{\textcolor{magenta}{{#1}}}}}
\DeclarePairedDelimiter{\abs}{\lvert}{\rvert}
\DeclarePairedDelimiter{\norma}{\lVert}{\rVert}
\DeclareMathOperator{\spn}{span}
\newcommand{\R}{\mathbb{R}}
\newcommand{\N}{\mathbb{N}}
\newcommand{\Hn}{\mathcal{H}^{n-1}}
\newcommand{\eps}{\varepsilon}
\newcommand{\Om}{\Omega}
\DeclareMathOperator{\divv}{div}
\newcommand{\Addresses}{{ 
 \bigskip 
 \footnotesize 

 \textsc{Dipartimento di Matematica ``Federigo Enriques'', Universit\'a degli Studi di Milano La Statale, Via Saldini 50 20123 Milano Italia.}\par\nopagebreak 
 
 \medskip 
 
 \textit{E-mail address}, E.~Cristoforoni: \texttt{emanuele.cristoforoni@unimi.it} 

  \bigskip 

\textsc{Mathematical and Physical Sciences for Advanced Materials and Technologies, Scuola Superiore Meridionale, Largo San Marcellino 10, 80138, Napoli, Italy.}\par\nopagebreak 
 
 \medskip 

  \textit{E-mail address}, 
  F.~Villone: \texttt{f.villone@ssmeridionale.it} 
}}
\title{Remarks on the reinforcement of the spectrum of an elliptic problem with Robin boundary condition}
\author{Emanuele Cristoforoni, Federico Villone*}
\date{}
\begin{document}
\maketitle
\begin{abstract}
We investigate the spectral properties of a differential elliptic operator on $H^1(\bar{\Om}\cup \Sigma_\eps)$, where $\Om$ is a smooth domain surrounded by a layer $\Sigma_\eps$. The thickness of the layer is given by $\eps h$, where $h$ is a positive function defined on the boundary $\partial \Om$ and $\eps$ is the ellipticity constant of the operator in $\Sigma_\eps$. We prove that, in the limit for $\eps$ going to $0$, the spectrum converges to the spectrum of a differential elliptic operator in $H^1(\Om)$, and we investigate a first-order asymptotic development.\smallskip

    \textsc{Keywords: Eigenvalue problem, Robin Boundary Condition,  Thermal Insulation}  
    
    \textsc{MSC 2020: 35P20, 49R05, 35J25, 80A19}
\end{abstract}

\begin{center}
\begin{minipage}{10cm}
\small
\tableofcontents
\end{minipage}
\end{center}

\section{Introduction}
Let $\Omega\subset\R^n$ be a bounded, open set with $C^{1,1}$ boundary, let $\nu_0$ be the unit, outer normal vector to the boundary of $\partial \Omega$.
Let $h:\partial\Om\to \R$ be a strictly
positive Lipschitz-continuous function. For all $\eps>0$, consider 
\[\Sigma_\eps:=\Set{\sigma+t\nu_0(\sigma)| \begin{gathered}
\sigma\in\partial\Om, \\
0<t<\eps h(\sigma)
\end{gathered}}.\]

We denote by 
\[\Om_\eps=\overline{\Om}\cup \Sigma_\eps,\]
and by $\nu_\eps$ the unit, outer normal to its boundary (see \autoref{figura omeps}).

\begin{figure}[htbp]
\centering

 \begin{tikzpicture}[scale=0.66]

\definecolor{OmegaCol}{RGB}{255,185,110}   
\definecolor{SigmaCol}{RGB}{170,215,255}   
\definecolor{TauCol}{RGB}{95,120,85}


\begin{scope}[scale=0.67, rotate=90, shift={(5,0)}]

\fill[OmegaCol]
  plot [smooth cycle, tension=0.9]
  coordinates {
    (-5,0)
    (-3.8,2.6)
    (0,3)
    (3.8,2.6)
    (5,0)
    (3.8,-2.6)
    (0,-3)
    (-3.8,-2.6)
  };

\draw[thick]
  plot [smooth cycle, tension=0.9]
  coordinates {
    (-5,0)
    (-3.8,2.6)
    (0,3)
    (3.8,2.6)
    (5,0)
    (3.8,-2.6)
    (0,-3)
    (-3.8,-2.6)
  };

\fill[SigmaCol]
  plot [smooth cycle, tension=0.9]
  coordinates {
    (-4.9,0)
    (-3.65,2.55)
    (0,2.85)
    (3.65,2.55)
    (4.9,0)
    (3.65,-2.55)
    (0,-2.85)
    (-3.65,-2.55)
  };

  \draw[dashed] (4,-1.8) rectangle (4.7,-2.5);
\end{scope}

\draw[dashed] (1.7,6.3) -- (6.2,4.3);


\begin{scope}[scale=1.8, shift={(-3.5,-0.2)}]

\begin{scope}
\clip (6.2,-0.1) rectangle (10.2,3.9);

\begin{scope}[shift={(8.8,2.0)}, scale=2.7]


\fill[OmegaCol]
  plot [smooth cycle, tension=1]
  coordinates {
    (-2.4,-3)   
    (-2.4,1.0)  
    (-1.0,0.5)
    (0,0.0)
    (1.0,-1.0)
    (2.4,-2.6)
    (2.7,-1)    
  };
  
\fill[SigmaCol]
  plot [smooth cycle, tension=1]
  coordinates {
    (-2.3,0.5)   
    (-1.0,0.0)   
    (0,-0.5)     
    (1.0,-1.5)   
    (2.3,-3.0)   
    (2.3,-5)     
    (-2.3,-5)    
  };

\draw[thick]
  plot [smooth, tension=1]
  coordinates {
    (-2.3,1.0)
    (-1.0,0.5)
    (0,0.0)
    (1.0,-1.0)
    (2.3,-2.5)   
  };

\draw[->,  line width=0.7pt] (-0.26,-0.32) -- (-0.1,-0.123)
  node[pos=0.4, above=0.01pt, sloped] { $\nu_0$} ;

\draw[decorate, decoration={brace, mirror, amplitude=8pt}, line width=0.8pt]
  (-0.26,-0.32) -- (-0.0,0.0)
  node[pos=0.55, sloped, below=7pt] { $\varepsilon h$};


\end{scope}
\end{scope}
\node at (6.85,2.45) {$\Sigma_\eps$} ;
\node at (6.6,0.6) {${\Om}$} ;
\draw (6.2,-0.1) rectangle (10.2,3.9);

\node at (8,1) {${\sigma}$};



\end{scope}

\end{tikzpicture}
\caption{In blue the set $\Omega$, in orange the  layer $\Sigma_\eps$.}
\label{figura omeps}
\end{figure}

We remark that, by the regularity of $\Omega$, there exists a neighbourhood $\Gamma_{d_0}$ of $\partial\Omega$ such that every $x\in \Gamma_{d_0}$ admits a unique metric projection, $\sigma(x)$, on $\partial\Omega$, moreover, every $x\in \Gamma_{d_0}$ can be uniquely written as
\[x=\sigma(x)+d(x)\nu_0(\sigma(x)),\]
where $d(x)$ denotes the signed distance of $x$ from $\partial\Omega$. Hence, the mapping $(\sigma,t)\mapsto \sigma+t\nu_0(\sigma)$ is invertible on $\Sigma_\eps$ for $\eps$ sufficiently small.\medskip

Fix $\beta>0$. For every $\eps>0$ we consider the following eigenvalue problem with Robin boundary condition.
\begin{equation}\label{eq forte autofunz}
\begin{cases}
-\Delta u = \lambda u & \text{in } \Omega, \\[5 pt]
-\eps\Delta u = \lambda u  & \text{in } \Sigma_\eps, \\[5 pt]
u^-=u^+ & \text{on } \partial \Om, \\[5 pt]
\displaystyle\dfrac{\partial {{u}}^-}{\partial \nu_0} = \varepsilon \dfrac{\partial {u}^+}{\partial \nu_0} & \text{on } \partial \Omega,\\[10 pt]

\eps\displaystyle\dfrac{\partial u}{\partial \nu_\eps} + \beta u = 0 & \text{on } \partial \Omega_\eps, 
\end{cases}
\end{equation}
where $u^-$ and $u^+$ denote the traces of $u$ on $\partial\Omega$ from $\Omega$ and from $\Sigma_\eps$ respectively. By classical theory, the eigenvalue problem \eqref{eq forte autofunz} admits a discrete spectrum 
\[0<\lambda^1_{ \eps}(h)\le\lambda^2_{ \eps}(h)\le\dots\le\lambda^j_{ \eps}(h)\le\dots\to+\infty.\]

We study the asymptotic behaviour of $\lambda_{ \eps}^j(h)$ when $\eps$ approaches $0$. In particular, we will show that, for every $j\in\N$, $\lambda_{ \varepsilon}^j(h)$ converges to $\lambda^j(h)$, the $j$-th eigenvalue of the following limit problem
\begin{equation}\label{eq forte autoval limite}
\begin{cases}
-\Delta u = \lambda u & \text{in } \Omega, \\[5 pt]
\displaystyle\dfrac{\partial u}{\partial \nu_0} + \dfrac{\beta}{1+\beta h} u = 0 & \text{on } \partial \Omega,
\end{cases}
\end{equation}
and study the dependence of the limit eigenvalues on the function $h$. The limit of the solutions to elliptic problems of this type, and their properties, has been extensively studied in the case of the Dirichlet boundary condition in \cite{BCF80, F80, AB86, BBN17} (see also \cite{B88, BB19, HLL22, HLL24, AKK25, AKK25bis} ) and more recently for the Robin boundary condition in \cite{ZRWZ09, LWZZ12, DPNST21, DPO25} (see also \cite{ACNT24} and \cite{AC25}). In particular, in \cite{F80}, \cite{ZRWZ09} and \cite{DPO25}, the authors investigates asymptotic properties of the principal eigenvalue. We finally refer to \cite{CNT26} for a brief survey paper on the subject and on the related optimisation problems. \medskip

The interest in the eigenvalue problem \eqref{eq forte autofunz} is motivated by the study of thermal insulation. Indeed, if $\Omega$ represents a thermal conductor of thermal diffusivity $k_\Omega$ surrounded by a thin layer of highly insulated material $\Sigma_\eps$ of thermal diffusivity $k_\Sigma<<k_\Omega$, setting $\eps=k_\Sigma/k_\Omega$, we have that, assuming the outside temperature to be equal to zero, the temperature, $T$, in the insulated body $\Omega_\eps$ is a solution to the heat equation
\[\begin{cases}
     \dfrac{\partial T}{\partial t} -\Delta T = f &\text{ in } \Omega\times(0,+\infty),\\[10 pt]
     \dfrac{\partial T}{\partial t} -\eps\Delta T = f &\text{ in } \Sigma_\eps\times(0,+\infty),\\[10 pt]
       T^- = T^+ &\text{ on }\partial\Omega\times(0,+\infty),\\[10 pt]
     \dfrac{\partial T^-}{\partial \nu_0} = \eps  \dfrac{\partial T^+}{\partial \nu_0} &\text{ on }\partial\Omega\times(0,+\infty),\\[10 pt]
     \eps \dfrac{\partial T}{\partial \nu_\eps}+ \beta T=0 &\text{ on }\partial\Om_\eps\times(0,+\infty),\\[10pt]
     T(\cdot,0)=T_{0}&\text{ in }\Omega_\eps,
\end{cases}\]
where $f$ represents the heat source, $T_0$ is the initial temperature, and the Robin boundary condition, according to Newton's law of cooling, models the case in which the main mode of heat transfer with the environment is through convection. Hence, the temperature $T$ can be written as 
\[T(x,t)=T_\infty(x)+\sum_{i=1}^\infty c_i e^{-\lambda_\eps^i t}u_\eps^i(x),\]
where $(\lambda_\eps^i,u_\eps^i)$ are the solutions to the eigenvalue  problem \eqref{eq forte autofunz}, and $T_\infty$ is the stationary solution to the heat equation, that is, the solution to the Poisson problem
\[\begin{cases}
     -\Delta T_\infty = f &\text{ in } \Omega,\\[10 pt]
     -\eps\Delta T_\infty = f &\text{ in } \Sigma_\eps,\\[10 pt]
       T^-_\infty = T^+_\infty &\text{ on }\partial\Omega,\\[10 pt]
     \dfrac{\partial T^-_\infty}{\partial \nu_0} = \eps  \dfrac{\partial T^+_\infty}{\partial \nu_0} &\text{ on }\partial\Omega\,\\[10 pt]
     \eps \dfrac{\partial T_\infty}{\partial \nu_\eps}+ \beta T_\infty=0 &\text{ on }\partial\Om_\eps.
\end{cases}\]

\medskip

We now state the main results of the paper. In \autoref{spectr} we study the limit for $\eps$ going to zero of the eigenvalues $\lambda_\eps^j=\lambda_\eps^j(h)$ and prove the following theorem

\begin{teor}\label{cor3.6}
Let $\set{\lambda_\eps^j}$ and $\set{\lambda^j}$ denote the sequences of eigenvalues of problems \eqref{eq forte autofunz} and \eqref{eq forte autoval limite} respectively, counted with multiplicity. Fix $\{u_\eps^j\}$ be an orthonormal basis of eigenfunctions associated with problem \eqref{eq forte autofunz}. Then the following properties hold:
\begin{enumerate}[(i)]
    \item For every $j\in\mathbb{N}$, 
    \begin{equation}
        \lim_{\eps\to0^+}\lambda_\eps^j=\lambda^j.
    \end{equation}
    \item There exist $\set{\eps_k}$ a vanishing sequence and $\set{u^j}$ an orthonormal basis of eigenfunctions associated with problem \eqref{eq forte autoval limite} such that for every $j\in\mathbb{N}$
    \begin{equation}\label{convergenza estratta autofunzioni lambd}
        \lim_{k\to+\infty}\norma{u_{\eps_k}^j- u^j}_{L^2(\Om)}= 0,\quad \lim_{k\to+\infty}\norma{u_{\eps_k}^j}_{L^2(\Sigma_{\eps_k})}= 0.
    \end{equation}
  \item Assume that $\lambda$ is an eigenvalue of multiplicity $l+1$ for problem \eqref{eq forte autoval limite} such that \[\lambda=\lambda^j=\lambda^{j+1}=\dots=\lambda^{j+l}.\] 
  Then for all $w$ such that $w$ is an eigenfunction associated to $\lambda$ for \eqref{eq forte autoval limite}, there exists a family $\set{w_\eps}$ such that  $w_\eps\in \spn\{u_\eps^j,\dots ,u_\eps^{j+l}\}$ and
  \begin{equation}\label{conv ad autofunzione lambd}
      \lim_{\eps\to0^+}\norma{ w_\eps- w}_{L^2(\Om)}= 0, \quad \lim_{\eps\to0^+}\norma{ w_\eps}_{L^2(\Sigma_\eps)}= 0,
  \end{equation}
  
\end{enumerate}

\end{teor}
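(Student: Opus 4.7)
The plan is to sandwich $\lambda_\eps^j$ between matching upper and lower bounds derived from the Courant--Fischer characterization
\[
\lambda^j_\eps \;=\; \min_{\substack{V\subset H^1(\Om_\eps)\\\dim V=j}}\;\max_{u\in V\setminus\{0\}}\; \frac{\int_\Om|\nabla u|^2+\eps\int_{\Sigma_\eps}|\nabla u|^2+\beta\int_{\partial\Om_\eps}u^2\,d\Hn}{\int_\Om u^2+\int_{\Sigma_\eps}u^2},
\]
together with the analogous formula for $\lambda^j$. Both bounds are guided by the thin-layer ansatz $u(\sigma+t\nu_0)\simeq a(\sigma)+b(\sigma)t$ on $\Sigma_\eps$: plugging it into the transmission and Robin conditions of \eqref{eq forte autofunz} forces $\eps b(\sigma)=-\beta a(\sigma)/(1+\beta h(\sigma))$, which simultaneously explains the appearance of the effective coefficient $\beta/(1+\beta h)$ and prescribes the shape of the optimal test functions used below.

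For the upper bound I extend any $u\in H^1(\Om)$ to $\Om_\eps$ by
\[
\tilde u_\eps(\sigma+t\nu_0(\sigma))\;:=\;u(\sigma)\left(1-\frac{\beta t}{\eps(1+\beta h(\sigma))}\right),\qquad 0<t<\eps h(\sigma),
\]
keeping $\tilde u_\eps=u$ on $\Om$. Using the tubular-neighbourhood change of coordinates on $\Sigma_\eps$ (with Jacobian $1+O(\eps)$) a direct computation gives $\int_\Om|\nabla\tilde u_\eps|^2=\int_\Om|\nabla u|^2$, $\int_{\Sigma_\eps}\tilde u_\eps^2=O(\eps)$, and
\[
\eps\int_{\Sigma_\eps}|\nabla\tilde u_\eps|^2+\beta\int_{\partial\Om_\eps}\tilde u_\eps^2\,d\Hn\;\longrightarrow\;\int_{\partial\Om}\frac{\beta u^2}{1+\beta h}\,d\Hn,
\]
so the Rayleigh quotient of $\tilde u_\eps$ converges to that of $u$, uniformly on any fixed finite-dimensional subspace of $H^1(\Om)$. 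Plugging the span of the first $j$ eigenfunctions of \eqref{eq forte autoval limite} into the $\eps$-Courant--Fischer formula then yields $\limsup_{\eps\to0}\lambda^j_\eps\le\lambda^j$.

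For the lower bound and for (ii), I fix an $L^2(\Om_\eps)$-orthonormal family $\{u^k_\eps\}_{k=1}^{j}$ of eigenfunctions. The upper bound keeps $\|u^k_\eps\|_{H^1(\Om)}$, $\eps\int_{\Sigma_\eps}|\nabla u^k_\eps|^2$ and $\|u^k_\eps\|_{L^2(\partial\Om_\eps)}$ uniformly bounded, so along a common subsequence $u^k_\eps\rightharpoonup u^k$ weakly in $H^1(\Om)$ and strongly in $L^2(\Om)$ and in $L^2(\partial\Om)$. A one-dimensional Poincaré-type inequality along normal fibres of $\Sigma_\eps$ yields
\[
\int_{\Sigma_\eps}(u^k_\eps)^2\;\leq\; C\eps\|u^k_\eps\|_{L^2(\partial\Om)}^{2}+C\eps^2\int_{\Sigma_\eps}|\nabla u^k_\eps|^2\;=\;O(\eps),
\]
so the $L^2$-mass in the layer vanishes, the $L^2(\Om_\eps)$-orthonormality transfers to $\{u^k\}\subset L^2(\Om)$, and the limit family remains linearly independent. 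To identify each $u^k$ as a solution of \eqref{eq forte autoval limite}, I test the weak formulation of \eqref{eq forte autofunz} against the extension $\tilde\varphi_\eps$ of an arbitrary $\varphi\in C^\infty(\overline\Om)$ and integrate by parts on $\Sigma_\eps$. The resulting layer/boundary contribution
\[
\eps\int_{\Sigma_\eps}\nabla u^k_\eps\cdot\nabla\tilde\varphi_\eps+\beta\int_{\partial\Om_\eps}u^k_\eps\tilde\varphi_\eps\,d\Hn
\]
splits into three pieces: the outer-boundary term $\int_{\partial\Om_\eps}u^k_\eps(\eps\partial_{\nu_\eps}\tilde\varphi_\eps+\beta\tilde\varphi_\eps)\,d\Hn=O(\eps)$ vanishes by the design of $\tilde\varphi_\eps$; the inner-boundary term collapses to $\int_{\partial\Om}\beta u^k_\eps\varphi/(1+\beta h)\,d\Hn$, which converges by trace compactness to the limit Robin term; and the bulk term $\eps\int_{\Sigma_\eps}u^k_\eps\Delta\tilde\varphi_\eps$ is $o(1)$ because $\tilde\varphi_\eps$ is affine along normals and $|\Sigma_\eps|=O(\eps)$. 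Letting $\eps\to 0$ shows that $u^k$ solves \eqref{eq forte autoval limite} weakly with eigenvalue $\tilde\lambda^k:=\lim\lambda^k_\eps$. The orthonormal family $\{u^1,\dots,u^j\}$ therefore has $\tilde\lambda^k\leq\tilde\lambda^j$, so Courant--Fischer for the limit problem forces $\lambda^j\leq\liminf_{\eps\to 0}\lambda^j_\eps$, closing the gap with the upper bound and simultaneously producing (ii) after a diagonal extraction in $j$.

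Part (iii) is a direct consequence of (i) and (ii): given a limit eigenfunction $w=\sum_{k=0}^{l}\alpha_k u^{j+k}$ at the $l$-fold eigenvalue $\lambda$, set $w_\eps:=\sum_{k=0}^{l}\alpha_k u^{j+k}_\eps$ and apply (ii) term by term. The main technical obstacle I anticipate is the careful bookkeeping of the $o(1)$ remainders produced by the tubular-neighbourhood Jacobian and by the discrepancy $\partial_{\nu_\eps}-\partial_t$ in the integration by parts on $\Sigma_\eps$; the affine extension $\tilde\varphi_\eps$ is designed precisely so that those remainders recombine into the effective Robin coefficient $\beta/(1+\beta h)$.
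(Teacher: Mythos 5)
Your approach is genuinely different from the paper's. The paper invokes an abstract operator-convergence theorem (JOS89, reproduced as their Theorem~\ref{convergenza spettri astratto}) and verifies its hypotheses \Href{H1}--\Href{H4} for the resolvent operators $\mathcal{T}_{\eps,h}$ and $\mathcal{T}_h$, with \Href{H3} obtained via the $\Gamma$-convergence of the energy functionals established in \cite{DPNST21}. You instead run a direct Courant--Fischer sandwich argument with the explicit affine-in-$t$ extension as trial functions and an integration-by-parts identification of the limit weak equation. For parts~(i) and~(ii) your argument is sound: the upper bound via the extension $\tilde u_\eps$ and the lower bound via weak compactness, trace compactness, the one-dimensional Poincar\'e estimate in the layer, and Courant--Fischer applied to the limiting orthonormal family all go through (the bookkeeping of the $O(1/\eps)$ curvature term in $\Delta\tilde\varphi_\eps$ against the $O(\sqrt{\eps})$ bound on $\|u_\eps^k\|_{L^2(\Sigma_\eps)}$ works out, though it deserves to be written explicitly). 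Your route has the merit of being self-contained and explicit, at the cost of more analysis in $\Sigma_\eps$; the paper's route is shorter once the abstract theorem is granted and automatically delivers all three conclusions.

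There is, however, a genuine gap in part~(iii). You claim it is ``a direct consequence of (i) and (ii)'' by fixing an expansion $w=\sum_{k=0}^{l}\alpha_k u^{j+k}$ in the basis produced by~(ii) and setting $w_\eps:=\sum_{k=0}^{l}\alpha_k u_\eps^{j+k}$ with the \emph{same fixed} coefficients. But~(ii) only gives convergence along a subsequence $\eps_k$, whereas~(iii) demands convergence along the full family $\eps\to 0^+$, and a fixed linear combination need not converge along the full family: when $\lambda$ has multiplicity $l+1>1$, the orthonormal eigenfunctions $u_\eps^j,\dots,u_\eps^{j+l}$ can rotate within the approximate eigenspace as $\eps$ varies, so different subsequences can produce different limit bases and hence different limits of $\sum_k\alpha_k u_\eps^{j+k}$. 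What does converge is the orthogonal projection of (a suitable extension of) $w$ onto $\spn\{u_\eps^j,\dots,u_\eps^{j+l}\}$, i.e.\ $w_\eps$ must be allowed to depend on $\eps$ inside the span, not be a fixed combination. This is precisely the content of \autoref{lemma 13} (from \cite{VL57}) used in the paper's proof of \autoref{convergenza spettri astratto}: one shows that $R_\eps w$ is within $o(1)$ of the span of the eigenvectors whose eigenvalues lie in a fixed window around $\lambda$, and takes $w_\eps$ to be that projection. You would need to import an argument of this type to close~(iii); as written, the step fails.
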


In \autoref{opt}, we study the dependence of the eigenvalues $\lambda^j(h)$ on the function $h$ and related optimisation problems. For every $m>0$, let
\[\mathcal{H}_m(\partial \Om):=\left\{h\in L^1(\partial\Om), \,h\geq 0:\, \int_{\partial\Om}  h\,d\Hn\leq m \right\},\]
here $\Hn$ is the $(n-1)$-dimensional Hausdorff measure on $\R^n$. As the measure of the set $\Sigma_\eps$ can be approximated as
\[\abs{\Sigma_\eps}=\eps\int_{\partial\Omega} h\,d\Hn+o(\eps),\]
the integral constraint on $h$ is an approximation of the measure constraint on $\Sigma_\eps$, one would naturally impose in the study of the dependence of the eigenvalues $\lambda_\eps^j$ on the shape of the layer $\Sigma_\eps$.  The main result of the section is the following theorem.
\begin{teor}\label{optlambda}
Let $m>0$. For all $j\in\mathbb{N}$ there exists $\bar{h}_j\in\mathcal{H}_m$ such that 
\[\lambda^j(\bar{h}_j)=\min_{h\in\mathcal{H}_m}\lambda^j(h) \quad \text{and}\quad\int_{\partial\Om}\bar{h}_j\,d\Hn=m.\]

Moreover, if we consider $F:\R^r\to \R$ a lower semicontinuous function and $j_1\,,\dots,\, j_r\in\mathbb{N}$, then there exists
\[\min_{h\in \mathcal{H}_m} F(\lambda^{j_1}(h),\,\lambda^{j_2}(h),\,\dots,\,\lambda^{j_r}(h))\]
\end{teor}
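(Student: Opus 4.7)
My plan is to apply the direct method, after reparametrising $h$ by the Robin weight $\xi_h:=\beta/(1+\beta h)\in(0,\beta]$. In these variables the Rayleigh quotient for \eqref{eq forte autoval limite},
\[
R(u,\xi):=\frac{\int_\Om|\nabla u|^2\,dx+\int_{\partial\Om}\xi u^2\,d\Hn}{\int_\Om u^2\,dx},
\]
is affine in $\xi$, and the admissibility $\int_{\partial\Om} h\,d\Hn\le m$ becomes $\int_{\partial\Om}\xi_h^{-1}\,d\Hn\le M:=m+\Hn(\partial\Om)/\beta$. By Courant--Fischer, $\lambda^j(h)=\min_E\max_{u\in E\setminus\{0\}}R(u,\xi_h)$, the minimum running over $j$-dimensional subspaces $E\subset H^1(\Om)$.

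Given a minimising sequence $\{h_k\}\subset\mathcal{H}_m$, the weights $\xi_k:=\xi_{h_k}$ are uniformly bounded in $L^\infty(\partial\Om)$, so a subsequence satisfies $\xi_k\stackrel{*}{\rightharpoonup}\bar\xi$ with $0\le\bar\xi\le\beta$. Weak-$*$ lower semicontinuity of the convex functional $\eta\mapsto\int_{\partial\Om}\eta^{-1}\,d\Hn$ yields $\int_{\partial\Om}\bar\xi^{-1}\,d\Hn\le M$; in particular $\bar\xi>0$ a.e.\ on $\partial\Om$, so $\bar h:=\bar\xi^{-1}-1/\beta$ is a nonnegative $L^1(\partial\Om)$ function with $\int_{\partial\Om}\bar h\,d\Hn\le m$, and $\bar h\in\mathcal{H}_m$.

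The heart of the proof is continuity of $\lambda^j$ along the extracted subsequence. For the $\limsup$ inequality I would use $V$, the span of the first $j$ eigenfunctions of \eqref{eq forte autoval limite} with weight $\bar\xi$, as a test subspace in Courant--Fischer at $h_k$: since $V$ is finite dimensional its $L^2(\Om)$-unit sphere is compact, $v\mapsto(\Tr v)^2$ is continuous from $V$ to $L^1(\partial\Om)$, and $\xi_k\stackrel{*}{\rightharpoonup}\bar\xi$, hence $R(v,\xi_k)\to R(v,\bar\xi)$ uniformly on this sphere, which gives $\limsup\lambda^j(h_k)\le\max_{v\in V,\,\|v\|_{L^2(\Om)}=1}R(v,\bar\xi)=\lambda^j(\bar h)$. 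For the $\liminf$ inequality I would take $L^2$-orthonormal eigenfunctions $u_k^1,\dots,u_k^j$ of \eqref{eq forte autoval limite} relative to the weight $\xi_k$; the Rayleigh identity together with the trace inequality bounds them in $H^1(\Om)$, so up to a subsequence they converge weakly in $H^1(\Om)$ and strongly in $L^2(\Om)$ and $L^2(\partial\Om)$ to an $L^2(\Om)$-orthonormal family $u_*^1,\dots,u_*^j$. For any $u=\sum_i c_i u_*^i$ with $\sum c_i^2=1$, weak lower semicontinuity of the Dirichlet integral combined with weak-$*$ $\times$ strong convergence of the boundary term in $L^1(\partial\Om)$ gives $R(u,\bar\xi)\le\liminf_k R\bigl(\sum_i c_i u_k^i,\xi_k\bigr)\le\liminf\lambda^j(h_k)$, and maximising over $(c_i)$ yields $\lambda^j(\bar h)\le\liminf\lambda^j(h_k)$.

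Combining the two inequalities gives $\lambda^j(h_k)\to\lambda^j(\bar h)$, so $\bar h$ attains the minimum. If $\int_{\partial\Om}\bar h\,d\Hn<m$, replacing $\bar h$ by $\tilde h:=\bar h+c$ with $c:=(m-\int_{\partial\Om}\bar h\,d\Hn)/\Hn(\partial\Om)>0$ gives $\tilde h\in\mathcal{H}_m$ with $\int_{\partial\Om}\tilde h\,d\Hn=m$, and since $\xi_{\tilde h}<\xi_{\bar h}$ pointwise the Rayleigh quotient decreases and $\lambda^j(\tilde h)\le\lambda^j(\bar h)$, producing a saturating minimiser. The second part of the theorem follows from the same compactness scheme: a minimising sequence for $f\bigl(\lambda^{j_1}(\cdot),\dots,\lambda^{j_d}(\cdot)\bigr)$ produces some $\bar h\in\mathcal{H}_m$ along which each $\lambda^{j_i}(h_k)\to\lambda^{j_i}(\bar h)$, and the lower semicontinuity of $f$ yields $f\bigl(\lambda^{j_1}(\bar h),\dots,\lambda^{j_d}(\bar h)\bigr)\le\liminf f\bigl(\lambda^{j_1}(h_k),\dots,\lambda^{j_d}(h_k)\bigr)$, which is the infimum. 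The main obstacle is the compactness step, namely ruling out that $\bar\xi$ vanishes on a positive-measure set so that $\bar h$ stays an $L^1$ function rather than degenerating into a measure with singular part; this is exactly what the convex bookkeeping of $\int 1/\xi$ buys.
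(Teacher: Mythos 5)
Your argument is correct, and it reaches the same conclusion by a genuinely different route on the key continuity step. The paper first proves that $h\mapsto\lambda^j(h)$ is continuous for the weak-$*$ topology on the Robin weights (its Theorem on the continuity of $\lambda^j(h)$) by showing the resolvent operators $\mathcal{T}_{h_k}$, $\mathcal{T}_h$ satisfy hypotheses \Href{H1}--\Href{H4} of the abstract spectral-convergence framework of \autoref{convergenza spettri astratto} (via the $\Gamma$-convergence type argument in \autoref{lemma conv L2 h}), and then combines this with compactness of $\mathcal{B}_m$ in the weak-$*$ topology. You instead argue continuity directly from Courant--Fischer: a fixed test subspace $V$ for the $\limsup$, weak $H^1$ compactness of the approximating eigenfunctions plus weak lower semicontinuity of the Dirichlet term and weak-$*\times$strong convergence of the boundary term for the $\liminf$. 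Your approach is more elementary and self-contained; the paper's is indirect but recycles machinery already built for the $\eps\to0$ limit and in addition yields convergence of eigenfunctions (needed elsewhere). The compactness bookkeeping is the same in both: $\{b_h:h\in\mathcal{H}_m\}$ is bounded in $L^\infty$, and weak-$*$ lower semicontinuity of the convex functional $\eta\mapsto\int_{\partial\Om}\eta^{-1}$ (Fatou along a.e.\ convergent subsequences, or Fenchel duality) keeps the limit weight positive a.e.\ and the limit $\bar h$ in $L^1$. Your saturation step (adding a constant to $\bar h$ lowers the Robin weight, hence lowers $\lambda^j$) is the same monotonicity the paper extracts from the min-max formula; and the second part of the statement follows identically in both proofs from compactness plus continuity of the $\lambda^{j_i}$ and lower semicontinuity of $f$.

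One small point worth making explicit in your write-up: for the uniform $H^1$ bound on the eigenfunctions $u_k^1,\dots,u_k^j$ you need $\lambda^j(h_k)$ to stay bounded, which is automatic along a minimising sequence (compare against any fixed admissible $h_0$), but you should say so; similarly, the uniform-over-the-sphere convergence $R(v,\xi_k)\to R(v,\bar\xi)$ follows because $\{(\Tr v)^2:\,v\in V,\,\|v\|_{L^2(\Om)}=1\}$ is a norm-compact subset of $L^1(\partial\Om)$ and a weak-$*$ convergent sequence in $L^\infty$ converges uniformly on $L^1$-compacta.
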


In \autoref{dev} we use the so-called \textit{lemma on small eigenvalues}, introduced by Colin de 
Verdière in \cite{CV86}, to study the limit of the quotient 
\[\dfrac{\lambda_\eps^j-\lambda^j}{\eps}.\]
Namely, for every $\lambda\in\R$, consider the bilinear form on $H^1(\Omega)$ 
\[Q_\lambda(u,v;h) =
\int_{\partial \Om}\dfrac{\beta Hh(2+\beta h)}{2(1+\beta h)^2}uv\,d\Hn -\lambda\int_{\partial\Om}\dfrac{h(3+3\beta h+\beta^2h^2)}{3(1+\beta h)^2}uv\,d\Hn,\]
where $H$ denotes the mean curvature of the boundary of $\Omega$. We then prove the following theorem

\begin{teor}\label{asymptotic}
Fix a positive function $h \in C^{1,1}(\Gamma_{d_0})$ such that $h(x) = h(\sigma(x))$. Then, for every $i\in\N$ the following limit exists
\[\lim_{\eps\to0^+}\dfrac{\lambda^i_\eps(h)-\lambda^i(h)}{\eps},\]
 and can be characterised as follows.
Let $l\in\N_0$, let $\lambda$ be an eigenvalue of \eqref{eq forte autoval limite} with multiplicity $l+1$,  let $i\in\N$ such that
\[\lambda=\lambda^i(h)=\dots=\lambda^{i+l}(h),\]
and denote by $E(\lambda)$ the associated eigenspace. for every $j=0,\dots,l$ denote by $\zeta^{j}_\lambda(h)$ the $(j+1)$-th eigenvalue of the bilinear form $Q_\lambda$ restricted to the $(l+1)$-dimensional space $E(\lambda)$.  Then for every $j=0,\dots,l$ we have
\begin{equation}\label{differeig}
    \lim_{\eps\to0^+}\dfrac{\lambda_\eps^{i+j}(h)-\lambda^{i+j}(h)}{\eps}= \zeta^{j}_\lambda(h).
\end{equation}
\end{teor}

We remark that, for the case of the principal eigenvalue with Dirichlet boundary condition (formally $\beta=+\infty$)  the first-order asymptotic development for the principal eigenvalue has already been proved in \cite{Y18}. Finally, in \autoref{Dirichlet}, we discuss the previous theorems in the case in which we replace the Robin boundary condition, in the original problem, with a Dirichlet one

\section{Convergence of the spectrum}\label{spectr}

To facilitate the analysis of the asymptotic behaviour of the eigenvalues, we will consider the resolvent operators. In this way, we can see both $\lambda_{\varepsilon}^j(h)$ and $\lambda^j(h)$ as the reciprocals of the eigenvalues of an operator. This reformulation allows for a clearer comparison of the spectra as $\eps$ approaches $0$.\medskip

For all $\eps>0$ and $f\in L^2(\Om_\eps)$ we consider $v_{\eps,h, f}=\mathcal{T}_{\eps,h} f\in H^1(\Om_\eps)$ the unique weak solution of
\[
\begin{cases}
-\Delta v_{\varepsilon,h,f} = f & \text{in } \Omega, \\[5 pt]
-\eps\Delta v_{\varepsilon,h,f} = f & \text{in } \Sigma_\eps, \\[5pt]
v_{\eps, h,f}^-=v_{\eps,h,f}^+&\text{on } \partial \Om, \\[5 pt]
\displaystyle\dfrac{\partial {v_{\eps, h,f}^-}}{\partial \nu_0} = \varepsilon \dfrac{\partial v_{\eps, h,f}^+}{\partial \nu_0} & \text{on } \partial \Omega,\\[10 pt]
\eps\displaystyle\dfrac{\partial v_{\eps, h, f}}{\partial \nu_\eps} + \beta v_{\varepsilon,h, f} = 0 & \text{on } \partial \Omega_\varepsilon. 
\end{cases}
\]
That is the unique solution of
\begin{equation}\label{eq forte}\int_{\Omega} \nabla v_{\eps,h,f}\nabla \varphi\,dx+\eps\int_{\Sigma_\eps} \nabla v_{\eps,h,f}\nabla \varphi\,dx+\beta\int_{\partial\Omega_\eps}v_{\eps,h,f}\varphi\,d\Hn=\int_{\partial\Omega_\eps}f\varphi\,dx,\end{equation}
for every $\varphi\in H^1(\Omega_\eps)$.
Moreover $v_{\eps,h,f}$ is also the unique minimiser in $H^1(\Om_\eps)$ of the functional
\begin{equation}\label{funzionale Gepsf}
    G_{\eps,f}(w):=\int_{\Om}\abs{\nabla w}^2 \,dx+\eps\int_{\Sigma_\eps}\abs{\nabla w}^2 \,dx+\beta \int_{\partial\Om_\eps} w^2\,d\Hn- 2\int_{\Om_\eps} w f\,dx.
\end{equation}
In the following, we will use the notation 
\[
a(t) = \begin{cases} 
\eps & \text{if } t > 0, \\[5 pt]
1 & \text{if } t \leq 0,
\end{cases}
\]
so that, if $d$ denotes the signed distance form $\Omega$, the functional $G_{\eps,f}$ can be more compactly written as
\[ G_{\eps,f}(w)=\int_{\Om_\eps} a(d)\abs{\nabla w}^2 \,dx+\beta \int_{\partial\Om_\eps} w^2\,d\Hn- 2\int_{\Om_\eps} w f\,dx.\]

Similarly, for all $f\in L^2(\Om)$ we consider $v_{h,f}=\mathcal{T}_hf\in H^1(\Om)$ the unique weak solution of the boundary value problem 
\[
\begin{cases}
-\Delta v_{h,f} = f & \text{in } \Omega, \\[5 pt]
\displaystyle\dfrac{\partial v_{h,f}}{\partial \nu_0} + \dfrac{\beta}{1+\beta h} v_{h,f} = 0 & \text{on } \partial \Omega.
\end{cases}
\]
That is the unique solution of
\begin{equation}\label{eq forte limite}\int_\Omega\nabla v_{h,f}\nabla\varphi\,dx+\beta\int_{\partial\Omega}\dfrac{v_{h,f}\varphi}{1+\beta h}\,d\Hn=\int_\Om f\varphi\,dx,\end{equation}
for every $\varphi\in H^1(\Omega)$.
The solution $v_{h,f}$ is also the unique minimiser in $H^1(\Om)$ of the functional
\begin{equation}\label{G 0 f}
G_{f}(w):=\int_{\Om}|\nabla w|^2 \,dx+\beta\int_{\partial\Om} \dfrac{w^2}{1+\beta h} \,d\Hn- 2\int_{\Om} w f\,dx.
\end{equation}

For $\eps>0$, the operator $ \mathcal{T}_{\eps,h} : L^2(\Omega_\varepsilon) \to L^2(\Omega_\varepsilon) $, as previously defined, is positive, linear, continuous, compact, and self-adjoint. Similarly, $ \mathcal{T}_h : L^2(\Omega) \to L^2(\Omega) $ satisfies the same properties.
For the sake of simplicity, whenever there is no ambiguity, we will drop the explicit dependence on $h$.\medskip

We now state a Poincarè-type inequality, which will be useful in the following. We refer, for instance,  to \cite[Lemma 5.2]{AC25} for the proof. 
\begin{lemma}\label{lemma:poinc}
Let $h\colon\partial\Omega\to\R$ be a positive  Lipschitz function . Then there exist positive constants $d_0=d_0(\Omega)$, and $C_p=C_p(\Omega,\norma{h}_{C^{0,1}},\beta)$ such that if
\begin{equation}
    \eps\norma{h}_{\infty}\le d_0,
\end{equation}
then for every $w\in H^1(\Omega_\eps)$
\begin{equation}\label{eq:epspoinc}\int_{\Omega_\eps} w^2\,dx\le C_p\left[\int_{\Omega_\eps} a(d) \abs{\nabla w}^2\,dx+\beta\int_{\partial\Omega_\eps} w^2\,d\Hn\right],\end{equation}
and
\begin{equation}\label{eq:epspoinc2}\int_{\Sigma_\eps} w^2\,dx\le \eps C_p\left[\eps\int_{\Sigma_\eps} \abs{\nabla w}^2\,dx+\beta\int_{\partial\Omega_\eps} w^2\,d\Hn\right].\end{equation}
\end{lemma}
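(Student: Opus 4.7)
The plan is to combine a tubular-coordinate parametrization of the thin layer $\Sigma_\eps$ with the classical Robin--Poincar\'e inequality on the fixed domain $\Om$. Since $\partial\Om$ is $C^{1,1}$, there is a reach $d_0 = d_0(\Om)>0$ such that the normal map $\Phi(\sigma,t) := \sigma + t\,\nu_0(\sigma)$ is a bi-Lipschitz homeomorphism of $\partial\Om\times[0,d_0)$ onto its image, with Jacobian bounded above and below by constants depending only on $\Om$. The assumption $\eps\|h\|_\infty\le d_0$ guarantees that $\Sigma_\eps$ is entirely contained in the range of $\Phi$ and that $\partial\Om_\eps = \Phi\bigl(\{(\sigma,\eps h(\sigma)):\sigma\in\partial\Om\}\bigr)$.

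Step 1 (proof of \eqref{eq:epspoinc2}): by density assume $w$ smooth. For a.e.\ $\sigma\in\partial\Om$ and every $t\in[0,\eps h(\sigma)]$, the fundamental theorem of calculus along the normal segment gives
\[
w(\sigma+t\nu_0(\sigma)) = w(\sigma+\eps h(\sigma)\nu_0(\sigma)) - \int_t^{\eps h(\sigma)} \partial_s w(\sigma+s\nu_0(\sigma))\,ds.
\]
Squaring, applying Cauchy--Schwarz, integrating in $t\in(0,\eps h(\sigma))$ and then in $\sigma$ against the bounded Jacobian of $\Phi$ yields an estimate of the form
\[
\int_{\Sigma_\eps} w^2\,dx \le C\Bigl(\eps\|h\|_\infty \int_{\partial\Om_\eps} w^2\,d\Hn + \eps^2\|h\|_\infty^2 \int_{\Sigma_\eps} \abs{\nabla w}^2\,dx\Bigr),
\]
with $C=C(\Om)$. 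Rewriting $\eps^2\int_{\Sigma_\eps}\abs{\nabla w}^2 = \eps\cdot\eps\int_{\Sigma_\eps}\abs{\nabla w}^2$ and factoring out a common $\eps$ gives \eqref{eq:epspoinc2} with $C_p$ depending on $\Om$, $\|h\|_\infty$, and $\beta$.

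Step 2 (proof of \eqref{eq:epspoinc}): decompose $\int_{\Om_\eps} w^2 = \int_\Om w^2 + \int_{\Sigma_\eps} w^2$; the second term is handled by Step 1. For the first, I would apply the classical Robin--Poincar\'e inequality on the fixed $C^{1,1}$ domain $\Om$,
\[
\int_\Om w^2\,dx \le C(\Om,\beta)\Bigl(\int_\Om \abs{\nabla w}^2\,dx + \beta\int_{\partial\Om} w^2\,d\Hn\Bigr),
\]
and then transfer the inner trace onto the outer one by the very same FTC argument of Step 1, obtaining
\[
\int_{\partial\Om} w^2\,d\Hn \le C'\Bigl(\int_{\partial\Om_\eps} w^2\,d\Hn + \eps\|h\|_\infty \int_{\Sigma_\eps}\abs{\nabla w}^2\,dx\Bigr).
\]
Summing the two contributions and using $\eps\int_{\Sigma_\eps}\abs{\nabla w}^2 = \int_{\Sigma_\eps} \eps(d)\abs{\nabla w}^2$ produces \eqref{eq:epspoinc}.

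The main obstacle is the uniform control of constants: one must verify that every $\eps$-dependent factor produced by the change of variables and by the FTC either is absorbed by the weight $\eps(d)$ on the bulk term or by the factor $\beta$ on the boundary term, so that the final $C_p$ depends only on $\Om$, $\|h\|_{C^{0,1}}$, and $\beta$. The role of $\eps\|h\|_\infty\le d_0$ is precisely to keep the tubular neighborhood well-defined and its Jacobian uniformly comparable to $1$; no subtler analytical argument is required, which is why the lemma is quoted from the literature rather than reproved in the paper.
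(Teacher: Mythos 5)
The paper itself does not prove this lemma; it simply quotes \cite[Lemma~5.2]{AC25}, so there is no internal proof to compare against. Your argument is a correct, self-contained route to the statement: the FTC along normal rays in the tubular neighborhood, combined with the boundedness of the parametrization Jacobian under $\eps\norma{h}_\infty\le d_0$, gives exactly the $\eps$-homogeneity needed in \eqref{eq:epspoinc2}, and the decomposition $\int_{\Omega_\eps}w^2=\int_\Omega w^2+\int_{\Sigma_\eps}w^2$ together with the fixed-domain Robin--Poincar\'e inequality and the inner-to-outer trace transfer yields \eqref{eq:epspoinc}. The one detail you gloss over but that justifies the $\norma{h}_{C^{0,1}}$ (rather than only $\norma{h}_\infty$) dependence in $C_p$ is that passing from $\int_{\partial\Omega}w(\sigma+\eps h(\sigma)\nu_0)^2\,d\Hn$ to $\int_{\partial\Omega_\eps}w^2\,d\Hn$ introduces the boundary Jacobian $J^\tau\phi_{\eps h}\sqrt{1+\eps^2\abs{\nabla h}^2}$, whose uniform upper and lower bounds require $h$ Lipschitz; and the $\beta$-dependence enters both through the fixed-domain inequality and through dividing out $\beta$ from the boundary term when packaging the constant. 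Neither of these is a gap, only a bookkeeping point worth making explicit.
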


Our aim is to prove the convergence of the spectrum of $\mathcal{T}_{\eps,h}$ to the spectrum of $\mathcal{T}_h$. To do so, we will apply the theory developed in \cite{JOS89}. 

\begin{teor}[\cite{JOS89}, Chapter III \S 1.2]\label{convergenza spettri astratto}
Let ${H}_\varepsilon$, ${H}_0$ be separable Hilbert spaces with scalar products $(\cdot, \cdot)_\varepsilon$, $(\cdot, \cdot)_0$ and norms $\| \cdot \|_\varepsilon$, $\| \cdot \|_0$ respectively. Consider  $\mathcal{A}_\varepsilon:{H}_\eps\to{H}_\eps $ and $\mathcal{A}_0:{H}_0\to{H}_0$ continuous linear operators such that for all $w\in{H}_0$, $\mathcal{A}_0 w\in \mathcal{V}$, where $\mathcal{V}$ is a linear subspace of ${H}_0$.
Assume the following:

\begin{enumerate}
  \item[\Hlabel{1}.] There exist continuous linear operators $R_\eps: {H}_0 \to {H}_\eps$ such that for any $f \in \mathcal{V}$ we have 
  \[
  \lim_{\eps\to 0}(R_\eps f, R_\eps f)_\eps \to (f,f)_0,
  \]

  \item[\Hlabel{2}.] The operators $\mathcal{A}_\eps$, $\mathcal{A}_0$ are positive, compact, and self-adjoint in ${H}_\eps$ and ${H}_0$ respectively. Moreover
  \[
  \sup_\eps \| \mathcal{A}_\eps \|_{\mathcal{L}({H}_\eps)} < \infty.
  \]

  \item[\Hlabel{3}.] For any $f \in \mathcal{V}$ we have
  \[
  \lim_{\eps\to 0}\| \mathcal{A}_\eps R_\eps f - R_\eps \mathcal{A}_0 f \|_\eps=0.
  \]

  \item[\Hlabel{4}.] The family of operators $\Set{\mathcal{A}_\eps}$ is uniformly compact in the following sense. For any family $\Set{f_\eps}$ with $f_\eps \in {H}_\eps$ for every $\eps>0$, such that $\sup_\eps \| f^\eps \|_\eps < \infty$, there exists a sequence $\set{f_{\eps_k}}$ such that for some $w_0 \in \mathcal{V}$,
  \[
  \lim_{k\to+\infty}\| \mathcal{A}_{\eps_k} f_{\eps_k} - R_{\eps_k} w_0 \|_{\eps_k}=0.
  \]

\end{enumerate}

Consider $\mu_\eps^1\geq\mu_\eps^2\geq\dots\geq \mu_\eps^j\geq\dots\to0$ and $\mu_0^1\geq\mu_0^2\geq\dots\mu_0^j\geq \dots\to0$ respectively the sequence of  eigenvalues of $\mathcal{A}_\eps$ and $\mathcal{A}_0$, where each eigenvalue is counted with multiplicity. Let $\{u_\eps^j\}$  be an orthonormal basis of eigenvectors for $\mathcal{A}_\eps$. 

Then the following properties hold:
\begin{enumerate}[(i)]
    \item For every $j\in\mathbb{N}$, 
    \begin{equation}\label{convergenza mu}
        \lim_{\eps\to0^+}\mu_\eps^j=\mu_0^j.
    \end{equation}
    \item There exist $\set{\eps_k}$ a vanishing sequence and $\set{u^j}$ an orthonormal basis of eigenvector of $\mathcal{A}_0$ such that 
    \begin{equation}\label{convergenza estratta autofunzioni}
        \lim_{k\to+\infty}\norma{u_{\eps_k}^j-{R}_{\eps_k} u^j}_{\eps_k}= 0.
    \end{equation}
  \item Assume that $\mu$ is an eigenvalue of multiplicity $l$ for $\mathcal{A}_0$ such that \[\mu=\mu_0^j=\mu_0^{j+1}=\dots=\mu_0^{j+l}.\] Then for all $v$ such that $\mathcal{A}_0 v=\mu v$, there exists a family $\Set{v_\eps}$ such that  $v_\eps\in \spn\{u_\eps^j,\dots ,u_\eps^{j+l}\}$ and
  \begin{equation}\label{conv ad autofunzione}
      \lim_{\eps\to0^+}\norma{ v_\eps-{R}_\eps v}_\eps= 0.
  \end{equation}
  
\end{enumerate}
\end{teor}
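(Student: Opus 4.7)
The plan is to deduce (i)--(iii) from the Courant--Fischer min--max characterisation
\[
\mu_\eps^j = \max_{\substack{V\subset H_\eps\\ \dim V=j}}\min_{v\in V\setminus\{0\}}\frac{(\mathcal{A}_\eps v,v)_\eps}{\norma{v}_\eps^2},
\]
together with the analogous formula for $\mu_0^j$, transferring information between $H_0$ and $H_\eps$ via $R_\eps$ and the four hypotheses. Convergence of eigenvalues splits into the matching bounds $\liminf_\eps\mu_\eps^j\ge\mu_0^j$ and $\limsup_\eps\mu_\eps^j\le\mu_0^j$, proved by a test-space construction and a compactness-based contradiction respectively; convergence of eigenfunctions then follows by a refined extraction.

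For the liminf bound, I would pick orthonormal eigenvectors $\{u^1,\dots,u^j\}$ of $\mathcal{A}_0$ corresponding to $\mu_0^1,\dots,\mu_0^j$ (they lie in $\mathcal{V}$ because $\mathcal{A}_0(H_0)\subseteq\mathcal{V}$) and use the test subspace $V_\eps:=\spn\{R_\eps u^1,\dots,R_\eps u^j\}\subset H_\eps$. Polarising \Href{H1} gives $(R_\eps u^i,R_\eps u^k)_\eps\to\delta_{ik}$, so $\dim V_\eps=j$ for $\eps$ small, and \Href{H3} applied to each $u^i$ yields $\mathcal{A}_\eps R_\eps u^i=\mu_0^i R_\eps u^i+o_\eps(1)$ in $H_\eps$. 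Expanding $v=\sum_i c_iR_\eps u^i\in V_\eps$ gives $(\mathcal{A}_\eps v,v)_\eps=\sum_i\mu_0^i c_i^2+o_\eps(\abs{c}^2)$ and $\norma{v}_\eps^2=\abs{c}^2+o_\eps(\abs{c}^2)$, so the Rayleigh quotient on $V_\eps$ is bounded below by $\mu_0^j+o_\eps(1)$, and the max--min formula delivers the liminf.

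For the limsup, I argue by contradiction: if $\mu_{\eps_k}^j\to\mu>\mu_0^j$ along a subsequence, pick orthonormal eigenvectors $v_{\eps_k}^1,\dots,v_{\eps_k}^j$ of $\mathcal{A}_{\eps_k}$ for $\mu_{\eps_k}^1\ge\dots\ge\mu_{\eps_k}^j$. The uniform bound in \Href{H2} and iterated applications of \Href{H4} (with diagonal extraction in $i$) produce $w^i\in\mathcal{V}$ with $\mathcal{A}_{\eps_k}v_{\eps_k}^i-R_{\eps_k}w^i\to 0$; dividing by $\mu_{\eps_k}^i\to\mu_*^i\ge\mu>0$ gives $v_{\eps_k}^i-R_{\eps_k}\tilde w^i\to 0$ for $\tilde w^i:=w^i/\mu_*^i$. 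Polarised \Href{H1} transfers the orthonormality of $\{v_{\eps_k}^i\}$ to $\{\tilde w^i\}$ in $H_0$, and the cyclic identity
\[
R_{\eps_k}\mathcal{A}_0\tilde w^i \;\approx\; \mathcal{A}_{\eps_k}R_{\eps_k}\tilde w^i \;\approx\; \mathcal{A}_{\eps_k}v_{\eps_k}^i = \mu_{\eps_k}^i v_{\eps_k}^i \;\approx\; \mu_*^i R_{\eps_k}\tilde w^i,
\]
combined with the asymptotic isometry of $R_{\eps_k}$ on $\mathcal{V}$ (again \Href{H1}), forces $\mathcal{A}_0\tilde w^i=\mu_*^i\tilde w^i$. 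Thus $\mathcal{A}_0$ has $j$ orthonormal eigenvectors with eigenvalues $\ge\mu>\mu_0^j$, contradicting the ordering of $\mu_0^i$.

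For (ii), now knowing (i), the same compactness-and-identification argument applied to the orthonormal basis $\{u_\eps^j\}$ produces, up to a Cantor diagonal subsequence, limit vectors $u^j$ forming an orthonormal eigenbasis of $\mathcal{A}_0$ with the matching eigenvalues $\mu_0^j$. For (iii), any $w$ with $\mathcal{A}_0 w=\mu w$ decomposes as $w=\sum_{i=0}^{l}\alpha_i u^{j+i}$ in the limit eigenspace, and setting $w_\eps:=\sum_{i=0}^{l}\alpha_i u_\eps^{j+i}\in\spn\{u_\eps^j,\dots,u_\eps^{j+l}\}$ yields \eqref{conv ad autofunzione} directly from (ii). The main technical obstacle I foresee is the contradiction step of the limsup, where one must orchestrate \Href{H1}, \Href{H3}, \Href{H4} simultaneously along $j$ extracted sequences to obtain $j$ mutually orthogonal eigenvectors of $\mathcal{A}_0$ with the precise limit eigenvalues.
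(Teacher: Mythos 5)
Your arguments for (i) and (ii) are correct, and your route to (i) is genuinely different from the paper's: you get the two matching bounds $\liminf_{\eps\to0}\mu_\eps^j\ge\mu_0^j$ (test space $\spn\{R_\eps u^1,\dots,R_\eps u^j\}$, polarised H1 plus H3, noting $u^i=\mathcal{A}_0u^i/\mu_0^i\in\mathcal{V}$) and $\limsup_{\eps\to0}\mu_\eps^j\le\mu_0^j$ (H2/H4 compactness, identification of limit eigenpairs through H1--H3, and Courant--Fischer for $\mathcal{A}_0$), whereas the paper first extracts limits $\bar\mu^j$ of $\mu^j_{\eps_k}$ by a diagonal argument and only afterwards proves $\bar\mu^j=\mu_0^j$ via the Vishik--Lyusternik Lemmas (Lemma A.2 and A.3) and an explicit multiplicity-counting argument. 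In your order the multiplicity matching needed for (ii) comes essentially for free from (i): the limit vectors $u^j$ with $\mu_0^j=\mu$ are orthonormal elements of the eigenspace $E(\mu)$, whose dimension equals the number of such indices, and positivity of $\mathcal{A}_0$ gives a trivial kernel, so the system is a basis --- but you should spell this step out, since ``orthonormal eigenbasis'' is exactly the content of the paper's cardinality argument for $I(\mu)$ and $I'(\mu)$.

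There is, however, a genuine gap in your part (iii). The statement requires $\lim_{\eps\to0^+}\norma{v_\eps-R_\eps v}_\eps=0$ along the \emph{whole} family $\eps\to0^+$, with the basis $\{u_\eps^j\}$ fixed in advance. Your construction $w_\eps=\sum_{i=0}^{l}\alpha_i u_\eps^{j+i}$ takes the coefficients $\alpha_i$ from the decomposition of $v$ in the limit basis $\{u^j\}$, but (ii) provides that basis, and the convergence $\norma{u^{j+i}_{\eps_k}-R_{\eps_k}u^{j+i}}_{\eps_k}\to0$, only along one extracted sequence $\eps_k$; for $\eps$ outside this sequence you have no information relating $u_\eps^{j+i}$ to $u^{j+i}$, so your argument yields \eqref{conv ad autofunzione} only along $\eps_k$, not the stated full limit. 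A repair within your framework is to define $v_\eps$ as the orthogonal projection of $R_\eps v$ onto $\spn\{u_\eps^j,\dots,u_\eps^{j+l}\}$ and argue by contradiction: on any subsequence where $\norma{v_\eps-R_\eps v}_\eps\ge\delta$, rerun the extraction of (ii) to get a further subsequence with a (possibly different) limit eigenbasis; since $v$ always lies in the span of the limiting $u^{j},\dots,u^{j+l}$ (they span $E(\mu)$) and $(R_{\eps_k}v,u^{j+i}_{\eps_k})_{\eps_k}\to(v,u^{j+i})_0$, the projection converges to $R_{\eps_k}v$ along that sub-subsequence, a contradiction. The paper instead gets the full-family limit directly from its Lemma A.3 (the spectral-window projection estimate), applied for every small $\eps$ once (i) guarantees that exactly the eigenvalues $\mu_\eps^j,\dots,\mu_\eps^{j+l}$ lie in a fixed interval $[\mu-d,\mu+d]$, together with H3 which makes $\norma{\mathcal{A}_\eps R_\eps v-\mu R_\eps v}_\eps$ vanish.
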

For the sake of completeness,  we include the proof of \autoref{convergenza spettri astratto}.
\begin{oss}
Notice that if $\mu=\mu_0^j$ is a simple eigenvalue of $\mathcal{A}_0$, then  \eqref{conv ad autofunzione} implies 
\[\lim_{\eps\to0^+} \norma{u^j_\eps-{R}_\eps v}_\eps=0\]
\end{oss}

We now want to prove that the functionals $\mathcal{T}_{\eps,h}$ and $\mathcal{T}$ satisfy the assumptions of \autoref{convergenza spettri astratto}. 
We can consider the function ${R}_\eps: L^2(\Om)\to L^2(\Om_\eps)$ defined as the extension at $0$ outside of $\Om$. For such a function assumption \Href{H1} is trivially satisfied, taking $\mathcal{V}=L^2(\Om)$.

Moreover, we have the following Lemma.
\begin{lemma}\label{ipotesi conv astratta}
    The family of functionals $\Set{\mathcal{T}_{\eps,h}}$ and $\mathcal{T}_h$ satisfy assumptions \Href{H2}, \Href{H3} and \Href{H4}.
\end{lemma}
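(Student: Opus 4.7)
\emph{Hypothesis \Href{H2}.} The operators $\mathcal{T}_{\eps,h}$ and $\mathcal{T}_h$ are resolvents of symmetric, coercive bilinear forms---coercivity follows from \autoref{lemma:poinc}---and are compact via the Rellich embedding of $H^1$ into $L^2$; hence they are positive, compact and self-adjoint. For the uniform norm bound, let $v_\eps := \mathcal{T}_{\eps,h} f$; testing the weak formulation of \eqref{eq forte} against $v_\eps$ itself and applying \eqref{eq:epspoinc} gives
\[\norma{v_\eps}_{L^2(\Om_\eps)}^2 \le C_p\left[\int_{\Om_\eps}\eps(d)|\nabla v_\eps|^2\,dx+\beta\int_{\partial\Om_\eps}v_\eps^2\,d\Hn\right] = C_p\int_{\Om_\eps} f\, v_\eps\,dx \le C_p\norma{f}_{L^2(\Om_\eps)}\norma{v_\eps}_{L^2(\Om_\eps)},\]
so $\norma{\mathcal{T}_{\eps,h}}_{\mathcal{L}(L^2(\Om_\eps))}\le C_p$ uniformly in $\eps$.

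\emph{Hypothesis \Href{H3}.} Fix $f\in L^2(\Om)$ and set $w_\eps:=\mathcal{T}_{\eps,h}(R_\eps f)$, $v:=\mathcal{T}_h f$. I construct a recovery $\tilde v_\eps\in H^1(\Om_\eps)$, equal to $v$ on $\Om$ and, in the tubular parametrization of $\Sigma_\eps$, given by
\[\tilde v_\eps(\sigma+t\nu_0(\sigma))=v(\sigma)\left(1-\frac{\beta t}{\eps(1+\beta h(\sigma))}\right),\qquad 0\le t\le \eps h(\sigma),\]
i.e.\ the solution of the one-dimensional normal ODE imposed by the Robin condition. A direct change of variables shows that the $\eps$-weighted layer gradient and the Robin boundary term combine, as $\eps\to 0$, to produce exactly $\beta\int_{\partial\Om} v^2/(1+\beta h)\,d\Hn$, so $G_{\eps,R_\eps f}(\tilde v_\eps)\to G_f(v)$. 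Minimality of $w_\eps$ combined with \eqref{eq:epspoinc} then yields a uniform $H^1(\Om)$-bound; up to a subsequence $w_\eps\to w^*$ weakly in $H^1(\Om)$ and strongly in $L^2(\Om)$ (Rellich), while \eqref{eq:epspoinc2} forces $\norma{w_\eps}_{L^2(\Sigma_\eps)}\to 0$. To identify $w^*=v$, I plug the recovery $\tilde\phi_\eps$ of an arbitrary $\phi\in H^1(\Om)$ into the weak formulation for $w_\eps$: the layer integral $\eps\int_{\Sigma_\eps}\nabla w_\eps\cdot\nabla\tilde\phi_\eps\,dx$, rewritten via the fundamental theorem of calculus along the normal fibers and combined with the Robin boundary integral on $\partial\Om_\eps$, produces in the limit $\beta\int_{\partial\Om}w^*\phi/(1+\beta h)\,d\Hn$. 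This identifies $w^*$ as the weak solution of \eqref{eq forte limite}, hence $w^*=v$ by uniqueness and $\norma{w_\eps-R_\eps v}_{L^2(\Om_\eps)}\to 0$.

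\emph{Hypothesis \Href{H4}.} Given $\sup_\eps\norma{f_\eps}_{L^2(\Om_\eps)}<+\infty$, set $v_\eps:=\mathcal{T}_{\eps,h}f_\eps$. The energy estimate of \Href{H2} provides a uniform bound on $\int_\Om|\nabla v_\eps|^2$, so Rellich yields a subsequence $v_\eps\to w_0$ strongly in $L^2(\Om)$ for some $w_0\in H^1(\Om)$, while \eqref{eq:epspoinc2} forces $\norma{v_\eps}_{L^2(\Sigma_\eps)}\to 0$, so $\norma{v_\eps-R_\eps w_0}_{L^2(\Om_\eps)}\to 0$. The substantive obstacle is the identification step in \Href{H3}: the layer integral $\eps\int_{\Sigma_\eps}\nabla w_\eps\cdot\nabla\tilde\phi_\eps\,dx$ is of order $O(1)$---because $\partial_t\tilde\phi_\eps=O(1/\eps)$---and cannot be discarded; its correct limit emerges only after pairing it with the Robin boundary term through integration along the normal fibers, a computation that is sensitive to the precise form of the recovery and which reproduces the weight $\beta/(1+\beta h)$ appearing in \eqref{eq forte limite}.
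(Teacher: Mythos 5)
Your proof of \Href{H2} is essentially identical to the paper's: test the weak formulation against the solution, apply \eqref{eq:epspoinc}, and absorb via Young's/Cauchy--Schwarz; the $\eps$-uniformity comes from $C_p$ being independent of $\eps$. Your proof of \Href{H4} likewise matches the paper: uniform $H^1(\Omega)$ bound, Rellich on $\Omega$, and \eqref{eq:epspoinc2} to kill the layer.

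Where you genuinely diverge is \Href{H3}. The paper simply invokes the $\Gamma$-convergence $\tilde G_{\eps,f}\to\tilde G_f$ proved in \cite{DPNST21}, which, combined with equicoercivity, gives $L^2$-convergence of minimisers with no further computation. You instead re-derive the result from scratch: you build an explicit recovery $\tilde v_\eps$ that is affine along the normal fibres (matching the Robin-imposed slope), show that its layer Dirichlet energy plus the Robin term collapse to the weight $\beta/(1+\beta h)$, and then identify the weak limit by inserting a recovery-type test function $\tilde\phi_\eps$ and combining the $O(1)$ layer integral with the Robin boundary integral via the fundamental theorem of calculus along fibres. This is a self-contained route and, importantly, it is precisely the mechanism underlying the cited $\Gamma$-convergence; you correctly flag that the layer integral cannot be discarded and only closes after pairing with the Robin term. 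What the paper's citation buys is brevity and the avoidance of a technical point your sketch skips: the recovery map $\sigma+t\nu_0\mapsto v(\sigma)(1-\beta t/(\eps(1+\beta h)))$ lies in $H^1(\Sigma_\eps)$ only when $\nabla_\tau(v|_{\partial\Omega})\in L^2(\partial\Omega)$, which is not automatic for $v\in H^1(\Omega)$. For $v=v_f$ this is rescued by elliptic regularity ($v_f\in H^2(\Omega)$, hence $v_f|_{\partial\Omega}\in H^1(\partial\Omega)$), and for the test functions $\tilde\phi_\eps$ one restricts to $\phi\in C^\infty(\overline\Omega)$ and concludes by density. With those two remarks added, your argument is complete and provides a useful self-contained alternative to the citation.

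One small misplacement: the cautionary paragraph you appended under \Href{H4} about the $O(1/\eps)$ normal derivative actually concerns the identification step of \Href{H3}, not \Href{H4}; in H4 the abstract hypothesis only requires existence of some $w_0\in\mathcal V=L^2(\Omega)$, so no identification of $w_0$ as a PDE solution is needed, and your three-step compactness argument there already suffices.
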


\begin{proof}

We start by proving that the family $\Set{\mathcal{T}_{\eps,h}}$ satisfies \Href{H2}. Let $f\in L^2(\Om_\eps)$, by \eqref{eq:epspoinc}, there exists a positive constant $C_p$ such that 
\[\norma{v_{\eps,f}}_{L^2(\Om_\eps)}^2\leq C_p\left(\int_{\Om_\eps}a(d)|\nabla v_{\eps,f}|^2\,dx+\beta \int_{\partial \Om_\eps}v_{\eps,f}^2\,d\Hn\right).\]
Using \eqref{eq forte} and Young's inequality, we obtain that for all $\delta>0$ 
\[\norma{v_{\eps,f}}_{L^2(\Om_\eps)}^2\leq C_p \int_{\Om_\eps}f v_{\eps, f}\, dx\leq  C_p\left(\dfrac{\delta}{2}\norma{f}_{L^2(\Om_\eps)}^2+\dfrac{1}{2\delta}\norma{v_{\eps,f}}_{L^2(\Om_\eps)}^2\right).\]
Choosing $\delta=C_p$, we obtain  
\begin{equation}\label{l2epsapriori}\norma{v_{\eps,f}}_{L^2(\Om_\eps)}\leq {C_p}\norma{f}_{L^2(\Om_\eps)}.\end{equation}
Since $C_p$ does not depend on $\eps$ we deduce \Href{H2}.

Notice that the inequalities above also imply that there exists a constant $C>0$ such that
\begin{equation}\label{boundedness norma X}
    \int_{\Om_\eps}a(d)|\nabla v_{\eps,f}|^2\,dx+\beta \int_{\partial \Om_\eps}v_{\eps,f}^2\,d\Hn\leq C \norma{f}_{L^2(\Om_\eps)}^2.
\end{equation}

We now aim to prove that assumption \Href{H3} is satisfied. Let $f \in L^2(\Omega)$, we extend the functionals $G_{\eps,f}$ and $G_{h,f}$ to the whole space $L^2(\mathbb{R}^n)$  as
\[\tilde{G}_{\varepsilon,f}(w)= \begin{cases}
G_{\eps,f}(w) &\text{ if }w\in H^1(\Omega_\eps),\\[5 pt]

+\infty &\text{ if }w\in L^2(\R^n)\setminus H^1(\Omega_\eps),
\end{cases}\] 
and 
\[\tilde{G}_{f}(w)= \begin{cases}
G_{f}(w) &\text{ if }w\in H^1(\Omega),\\[5 pt]

+\infty &\text{ if }w\in L^2(\R^n)\setminus H^1(\Omega).
\end{cases}\] 
In \cite{DPNST21}, the authors prove that $\tilde{G}_{\varepsilon,f}$ $\Gamma$-converges to $\tilde{G}_{f}$.
By the properties of $\Gamma-$ convergence, we obtain
\[\lim_{\eps\to 0}\norma{v_{\eps,\mathcal{R}_\eps f}-\mathcal{R_\eps} v_{f}}_{L^2(\Om_\eps)}=0.\]

We now want to prove that $\mathcal{T_\eps}$ satisfies \Href{H4}. Let $f_\eps\in L^2(\Om_\eps)$ be such that $\sup_{\eps}\norma{f_\eps}_\eps<+\infty$. Using \eqref{boundedness norma X} and \eqref{l2epsapriori}, we obtain that there exists a constant $C>0$ such that for every $\eps>0$,

\begin{equation}\label{bound norma X}
    \int_{\Om_\eps}a(d)|\nabla v_{\eps,f_\eps}|^2\,dx+\beta \int_{\partial \Om_\eps}v_{\eps,f_\eps}^2\,d\Hn\leq C,
\end{equation}
\[\norma{v_{\eps,f_\eps}}_{L^2(\Om_\eps)}^2\leq C.\]
Hence, $v_{\eps,f_\eps}$ is uniformly bounded in $H^1(\Om)$. As a consequence, there exists $w_0\in L^2(\Om)$ and a vanishing sequence $\set{\eps_k}$ such that 
\[\norma{v_{\eps_k, f_{\eps_k}}-w_0}_{L^2(\Omega)}\to 0.\]
By \eqref{eq:epspoinc2} and \eqref{bound norma X},  there exists a constant $C>0$ such that 
\[\norma{v_{\eps,f_\eps}}_{L^2(\Sigma_\eps)}\leq \eps C\to 0\text{ as }\eps \text{ approaches } 0.\]
\end{proof}

\begin{oss}\label{vepsfeps}
    About assumption \Href{H4}, we remark that, if we assume the family $\set{f_\eps}$ to be extended to zero outside $\Omega_\eps$, up to a subsequence $\set{\eps_k}$, there exists $f\in L^2(\R^n)$ such that $f_{\eps_k}$ converges to $f$ weakly in $L^2(\R^n)$. Then, since for every $w_{\eps_k}$ converging strongly in $L^2(\R^n)$ to a function $w$ we have 
    \[\int_{\Omega_\eps} f_{\eps_k} w_{\eps_k}\,dx\to\int_{\Omega} fw\,dx,\]
    one can adapt the proof of \cite[Theorem 3.1]{DPNST21} to prove that the functional $\tilde{G}_{\eps_k,f_{\eps_k}}$ $\Gamma$-convergence to $\tilde{G}_{f}$. Hence, up to possibly passing to a subsequence, we have that $v_{\eps_k,f_{\eps_k}}$ converges to $v_f$ weakly in $H^1(\Omega)$.
\end{oss}

Since the family $\Set{\mathcal{T}_{\eps,h}}$ satisfies all the assumptions of \autoref{convergenza spettri astratto}, we deduce \autoref{cor3.6}.

\section{Optimisation problems}\label{opt}
Given a fixed amount $m > 0 $ of insulating material, we seek an optimal distribution function $ h $ that minimises the $j$-th eigenvalue $ \lambda^j(h)$ of the limit problem \eqref{eq forte autoval limite}. We recall the definition of the set
\[\mathcal{H}_m(\partial \Om):=\left\{h\in L^1(\partial\Om), \,h\geq 0:\, \int_{\partial\Om}  h\,d\Hn\leq m \right\}.\]
We aim to prove that there exists $\overline{h}\in\mathcal{H}_m$ that maximises $\lambda^j$ over $\mathcal{H}_m$ that saturates the mass constraint.

For each $h\in\mathcal{H}_m$, we define 
\[b_h:=\dfrac{\beta}{1+\beta h},\]
which is the boundary parameter in \eqref{eq forte autoval limite}. Notice that the set
 \[\mathcal{B}_m(\partial\Omega)=\set{b_h:\, h\in \mathcal{H}_m}=\Set{b\in L^\infty(\partial \Om):\, 0<b\leq\beta, \, \int_{\partial \Om} \dfrac{1}{b}\,d\Hn\leq \dfrac{P(\Om)}{\beta} + m},\]
where $P(\Om)$ denotes the perimeter of $\Om$. Given the convexity of the function 
\[x\in(0,+\infty)\mapsto\dfrac{1}{x}\in(0,+\infty),\]
we have that the set $\mathcal{B}_m$ is compact in the the weak-$^*$ topology of $L^\infty(\partial \Om)$.\medskip

\begin{defi}
    We define a topology on $\mathcal{H}_m$ as follows. Let $\set{h_k}$ be a sequence in $\mathcal{H}_m$, We say that $\set{h_k}$ converges to $h\in \mathcal{H}_m $ if and only if the sequence $\set{b_{h_k}}$ converges to $b_h$ in the $L^\infty(\partial\Om)$ weak-$^*$ topology.
\end{defi}
\begin{oss}
    Notice that $\mathcal{H}_m$ with the previously defined topology is compact.
\end{oss}

We aim to prove that all the eigenvalues $\lambda^j(h)$ of the problem \eqref{eq forte autoval limite} are continuous with respect to this topology. 

We now state a uniform Poincaré-type inequality in $\mathcal{H}_m$ that will be useful in the following. The proof is a consequence of the results in the optimisation of $\lambda^1(h)$ in \cite{DPO25}. 
\begin{lemma}
   Fix $m>0$. There exists a constant $C=C(\Om, m)$ such that for all $h\in\mathcal{H}_m$ and for all $w\in H^1(\Om)$
    \begin{equation}\label{poinc unif Hm}
        \int_\Om w^2\,dx\leq C\left[\int_\Om |\nabla w|^2\,dx+\int_{\partial \Om}\dfrac{\beta}{1+\beta h}w^2\,d\Hn\right].
    \end{equation}
    
\end{lemma}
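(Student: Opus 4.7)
The plan is to argue by contradiction. Suppose the inequality fails uniformly in $h\in\mathcal{H}_m$. Then one can produce sequences $\{h_k\}\subset\mathcal{H}_m$ and $\{w_k\}\subset H^1(\Omega)$ normalised so that $\|w_k\|_{L^2(\Omega)}=1$ and
\[
\int_\Omega|\nabla w_k|^2\,dx+\int_{\partial\Omega}\frac{\beta}{1+\beta h_k}w_k^2\,d\Hn\;\longrightarrow\;0.
\]
The first ingredient is a compactness step for $\{w_k\}$: the energy bound forces $w_k$ to be bounded in $H^1(\Omega)$, so up to a subsequence $w_k\rightharpoonup w$ weakly in $H^1(\Omega)$, strongly in $L^2(\Omega)$, and, by compactness of the trace operator $H^1(\Omega)\to L^2(\partial\Omega)$, strongly in $L^2(\partial\Omega)$. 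Weak lower semicontinuity of the Dirichlet energy gives $\nabla w\equiv 0$, so $w$ is constant, and the normalisation forces $w\equiv|\Omega|^{-1/2}$; in particular $w^2$ is a strictly positive constant on $\partial\Omega$.

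The second ingredient is a compactness step for the weights. Since $b_{h_k}\in\mathcal{B}_m$ and $\mathcal{B}_m$ is weakly-$^*$ compact in $L^\infty(\partial\Omega)$, up to a further subsequence $b_{h_k}\overset{*}{\rightharpoonup} b$ for some $b\in\mathcal{B}_m$. Crucially, every element of $\mathcal{B}_m$ satisfies $1/b\in L^1(\partial\Omega)$, hence $b>0$ $\Hn$-a.e. on $\partial\Omega$ and $\int_{\partial\Omega}b\,d\Hn>0$. I would then pass to the limit in the boundary term by pairing weak-$^*$ convergence of $b_{h_k}$ with strong $L^1(\partial\Omega)$ convergence of $w_k^2$ (which follows from $w_k\to w$ in $L^2(\partial\Omega)$):
\[
0=\lim_{k\to\infty}\int_{\partial\Omega}b_{h_k}w_k^2\,d\Hn=\int_{\partial\Omega}b\,w^2\,d\Hn=|\Omega|^{-1}\int_{\partial\Omega}b\,d\Hn>0,
\]
which is the desired contradiction.

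The only non-routine point, and the one I would be most careful about, is verifying that the weak-$^*$ limit $b$ still lies in $\mathcal{B}_m$, in particular that $b>0$ almost everywhere. This follows from the convexity and lower semicontinuity of $t\mapsto 1/t$ on $(0,\infty)$, which makes the functional $b\mapsto\int_{\partial\Omega}(1/b)\,d\Hn$ sequentially lower semicontinuous with respect to weak-$^*$ convergence in $L^\infty$; this is precisely the closedness property of $\mathcal{B}_m$ exploited in the optimisation argument of \cite{DPO25}, which the statement explicitly invokes.
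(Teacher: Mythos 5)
Your proof is correct, and it takes a genuinely different route from the paper's. The paper simply cites the existence of $\min_{h\in\mathcal{H}_m}\lambda^1(h)$ from \cite[Theorem 3.1 and Proposition 3.4]{DPO25} and observes that, by the min--max characterisation of the first eigenvalue, one may take $C^{-1}=\min_{h\in\mathcal{H}_m}\lambda^1(h)$; the entire compactness work is thus outsourced to that reference. You instead give a self-contained contradiction argument that never mentions eigenvalues: you extract a normalised minimising-type sequence $(w_k,h_k)$, use weak $H^1$ compactness together with compactness of the trace operator to force $w_k\to|\Omega|^{-1/2}$, and then exploit the weak-$^*$ compactness of $\mathcal{B}_m$ (whose closedness rests on the convexity and lower semicontinuity of $t\mapsto 1/t$, exactly the property the paper records when introducing $\mathcal{B}_m$) to pass to the limit in the boundary term and reach $0=|\Omega|^{-1}\int_{\partial\Omega}b\,d\Hn>0$. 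Both arguments ultimately lean on the same underlying compactness of $\mathcal{B}_m$, but yours bypasses the eigenvalue machinery entirely and is more transparent about where each hypothesis enters, at the cost of being a bit longer than the paper's one-line citation; the paper's route has the advantage of also delivering the sharp constant $C^{-1}=\min\lambda^1(h)$.
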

\begin{proof}
    In \cite[Theorem 3.1 and Proposition 3.4]{DPO25} the authors proved the existence of
    \[\min_{h\in \mathcal{H}_m} \lambda^1(h).\]
    Using the variational characterization of $\lambda^1$, we deduce \eqref{poinc unif Hm}, choosing
    \[C^{-1}=\min_{h\in \mathcal{H}_m} \lambda^1(h).\]
    
\end{proof}

We recall that for all $f\in L^2(\Om)$ we denote by $v_{h,f}$ the solution to \eqref{eq forte limite}. Using this notation, we have the following lemma.

\begin{lemma}\label{lemma conv L2 h}
    Let $\{h_k\}\subset \mathcal{H}_m$ such that  $h_k$ converges to $h$ in $\mathcal{H}_m$. Then for all $f\in L^2(\Om)$,
    \[\lim_{k\to +\infty}\norma{v_{h_k,f}-v_{h,f}}_{L^2(\Om)}=0.\]
\end{lemma}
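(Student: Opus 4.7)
The plan is to work with the weak formulation of \eqref{eq forte limite}, exploit the uniform Poincar\'e inequality \eqref{poinc unif Hm} together with the compact embeddings $H^1(\Om)\hookrightarrow L^2(\Om)$ and $H^1(\Om)\hookrightarrow L^2(\partial\Om)$ (the latter via the compactness of the trace operator, which is available because $\partial\Om$ is $C^{1,1}$), and finally pass to the limit using the weak-$^*$ convergence $b_{h_k}\rightharpoonup^* b_h$ in $L^\infty(\partial\Om)$.

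First I would obtain a uniform $H^1$-bound on the family $\{v_{h_k,f}\}$. Testing the weak formulation of \eqref{eq forte limite} with $v_{h_k,f}$ itself gives
\[\int_\Om |\nabla v_{h_k,f}|^2\,dx+\int_{\partial\Om}b_{h_k}\,v_{h_k,f}^2\,d\Hn=\int_\Om f\,v_{h_k,f}\,dx,\]
so \eqref{poinc unif Hm} (whose constant is independent of $h\in\mathcal{H}_m$) combined with Cauchy--Schwarz and Young's inequality yields $\norma{v_{h_k,f}}_{H^1(\Om)}\le C\norma{f}_{L^2(\Om)}$ with $C=C(\Om,m,\beta)$.

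Next, up to a subsequence (not relabeled), $v_{h_k,f}\rightharpoonup v^*$ weakly in $H^1(\Om)$; by Rellich--Kondrachov and the compactness of the trace operator, the convergence is also strong in $L^2(\Om)$ and the traces converge strongly in $L^2(\partial\Om)$. To identify $v^*$, I would pass to the limit in the weak formulation
\[\int_\Om \nabla v_{h_k,f}\cdot\nabla \varphi\,dx+\int_{\partial\Om}b_{h_k}\,v_{h_k,f}\,\varphi\,d\Hn=\int_\Om f\varphi\,dx,\qquad \varphi\in H^1(\Om).\]
Weak convergence of the gradients handles the first term. For the boundary term, the splitting
\[\int_{\partial\Om}b_{h_k}v_{h_k,f}\varphi\,d\Hn-\int_{\partial\Om}b_h v^*\varphi\,d\Hn=\int_{\partial\Om}b_{h_k}(v_{h_k,f}-v^*)\varphi\,d\Hn+\int_{\partial\Om}(b_{h_k}-b_h)v^*\varphi\,d\Hn\]
reduces the task to two ingredients: the uniform bound $\norma{b_{h_k}}_{L^\infty(\partial\Om)}\le \beta$ together with the strong convergence of traces kills the first summand (since $(v_{h_k,f}-v^*)\varphi\to 0$ in $L^1(\partial\Om)$), and the weak-$^*$ convergence of $b_{h_k}$ against $v^*\varphi\in L^1(\partial\Om)$ kills the second. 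Hence $v^*$ solves \eqref{eq forte limite} with coefficient $b_h$, and by uniqueness $v^*=v_{h,f}$.

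Since the limit is independent of the subsequence, the whole family converges, and the $L^2(\Om)$-convergence is the one already produced by Rellich. The only slightly delicate point is the compatibility of the weak-$^*$ convergence of $b_{h_k}$ with the quadratic boundary term, which is precisely where the compact trace embedding intervenes to promote the weak $H^1$-convergence into the strong $L^2(\partial\Om)$-convergence required to pair with an $L^\infty$-weak-$^*$ limit.
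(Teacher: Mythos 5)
Your proposal is correct and follows essentially the same route as the paper: a uniform $H^1$ bound from the Poincar\'e inequality \eqref{poinc unif Hm}, extraction of a weakly convergent subsequence, identification of the limit by passing to the limit in the weak formulation using compactness of the trace and weak-$^*$ convergence of $b_{h_k}$, and uniqueness of the subsequence limit. The only (cosmetic) difference is the decomposition of the boundary term: you write $b_{h_k}(v_{h_k,f}-v^*)\varphi + (b_{h_k}-b_h)v^*\varphi$, which pairs each summand with exactly one mode of convergence, whereas the paper groups it as $b_h v_{h_{k_j},f}\varphi + (b_{h_{k_j}}-b_h)v_{h_{k_j},f}\varphi$ --- your version is arguably slightly cleaner since the second summand in the paper's splitting still contains two varying factors and implicitly needs your rearrangement anyway.
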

\begin{proof}
    By using the same computations as in \autoref{ipotesi conv astratta}, the Poincaré-like inequality \eqref{poinc unif Hm} implies that there exists $C>0$ such that 
    \[\norma{v_{h_k,f}}_{H^1(\Om)}\leq C\]
    Hence, there exist a subsequence $\set{h_{k_j}}$ and $v\in H^1(\Om)$ such that $v_{h_{k_j},f}$ converges to $v$ weakly in $H^1(\Om)$. Since $v_{h_{k_j},f}$ solves \eqref{eq forte limite}, for all $\varphi\in H^1(\Om)$,
    \begin{equation}\label{eq prima lim}
       \int_\Om \nabla v_{h_{k_j},f}\nabla \varphi\,dx+\int_{\partial \Om}b_{h}v_{h_{k_j},f}\varphi\,d\Hn+\int_{\partial \Om}(b_{h_{k_j}}-b_h)v_{h_{k_j},f}\varphi\,d\Hn=\int_\Om f\varphi\,dx. 
    \end{equation}
    
    Since $v_{h_{k_j},f}\varphi$ converges to $v\varphi$ in $L^1(\partial \Om)$, 
    \[\int_{\partial \Om}(b_{h_{k_j}}-b_h)v_{h_{k_j},f}\varphi\,d\Hn\to 0\text{ as }k \text{ approaches }+\infty.\]
    Taking the limit in \eqref{eq prima lim}, we obtain that for all $\varphi\in H^1(\Om)$
    \[  \int_\Om \nabla v\nabla \varphi\,dx+\int_{\partial \Om}b_{h}v\varphi\,d\Hn=\int_\Om f\varphi\,dx.\]
    Hence $v=v_{h,f}$. Since the limit does not depend on the subsequence, $v_{h_k,f}$ converges to $v_{h,f}$ in $L^2(\Om)$.
\end{proof}
\begin{oss}\label{conv h1}
Notice that the convergence in \autoref{lemma conv L2 h} is actually strong in $H^1(\Omega)$. Indeed, we have shown that any sequence admits a subsequence $\set{v_{h_{k_j}, f}}$ that converges to $v_{h,f}$ weakly in $H^1(\Om)$.  Moreover, using the weak equations, we have that 
\[\int_{\Om}|\nabla v_{h_{k_j},f}|^2\,dx=\int_\Omega f v_{h_{k_j},f}\,dx-\int_{\partial\Omega} b_{h_{k_j}} v_{h_{k_j},f}^2\to\int_\Omega f v_{h,f}\,dx-\int_{\partial\Omega} b_{h} v_{h,f}^2 =\int_{\Om}|\nabla v_{h,f}|^2\,dx. \]
Hence, $v_{h_{k_j},f}$ converges to $v_{h,f}$ strongly in $H^1(\Omega)$.
As the limit does not depend on the subsequence, we deduce that
\[\lim_{k\to+\infty}\norma{v_{h_{k},f}-v_{h,f}}_{H^1(\Om)}=0.\]
\end{oss}

The previous results allows us to prove the existence of a solution to optimisation problems such as maximising the average temperature, that is
\[\max_{h\in\mathcal{H}_m} \int_\Omega v_{h,f}\,dx\]
or minimising the distance from a  desired temperature $v^*$, that is
\[\min_{h\in\mathcal{H}_m} \int_\Omega \abs{v_{h,f}-v^*}^p\,dx,\]
for suitable $p$ (for instance $p\in[1,2^*]$). More generally, we have the following corollary.

\begin{cor}
    Let $j\colon\Omega\times\R\times\R^n\mapsto\R$ and $g\colon\partial\Omega\times\R\times\R^+$ be measurable functions such that \begin{enumerate}[(a)]
        \item  $(y,\xi)\mapsto j(x,y,\xi)$ is lower semicontinuous in $\R\times\R^n$, for almost every $x\in\Omega$,
        \item  $(y,z)\mapsto g(\sigma,y,z)$ is continuous in $\R\times\R^+$, for almost every $\sigma\in\partial\Omega$,
        \item $z\mapsto g(\sigma,y,z)$ is increasing and convex in $\R^+$ for every $(\sigma,y)\in\partial\Omega\times\R$,
        \item $j(x,y,\xi)\ge a_1(x)+a_2(y^2+\abs{\xi}^2)$, where $a_1\in L^1(\Omega)$ and $a_2\in\R$,
        \item $g(\sigma,y,z)\ge a_3(\sigma)+a_4y^2$, where $a_3\in L^1(\partial\Omega)$ and $a_4\in\R$.
    \end{enumerate}
    Let $f\in L^2(\Omega)$, an let
    \[E(h)=\int_\Omega j(x,v_{h,f},\nabla v_{h,f})\,dx+\int_{\partial\Omega} g(\sigma,v_{h,f},h)\,d\Hn.\]
    For every $m>0$ the problem
    \[\min_{h\in\mathcal{H}_m} E(h)\]
    admits a solution.
\end{cor}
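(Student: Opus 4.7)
The plan is to apply the direct method of the calculus of variations. Pick a minimising sequence $\{h_k\}\subset\mathcal{H}_m$. By the compactness of $\mathcal{H}_m$ noted after its definition, up to a subsequence $h_k\to\bar h\in\mathcal{H}_m$, i.e.\ $b_{h_k}\rightharpoonup^{*} b_{\bar h}$ in $L^\infty(\partial\Omega)$; by \autoref{lemma conv L2 h} and \autoref{conv h1} the corresponding states satisfy $v_{h_k,f}\to v_{\bar h,f}$ strongly in $H^1(\Omega)$. Passing to a further subsequence, $v_{h_k,f}$ and $\nabla v_{h_k,f}$ converge pointwise a.e.\ in $\Omega$, while the traces converge strongly in $L^2(\partial\Omega)$ and pointwise $\Hn$-a.e.\ on $\partial\Omega$. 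It then suffices to prove $E(\bar h)\le\liminf_k E(h_k)$.

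For the volume term I would split $j(x,y,\xi)=\bigl[j(x,y,\xi)-a_1(x)-a_2(\abs{y}^{2}+\abs{\xi}^{2})\bigr]+\bigl[a_1(x)+a_2(\abs{y}^{2}+\abs{\xi}^{2})\bigr]$. By (d) the first bracket is nonnegative and still lower semicontinuous in $(y,\xi)$, so Fatou's lemma applies to it; the second bracket passes to the limit via the strong $H^1$ convergence (giving $L^2$-convergence of $v_{h_k,f}$ and $\nabla v_{h_k,f}$), yielding
\[
\int_\Omega j(x,v_{\bar h,f},\nabla v_{\bar h,f})\,dx\;\le\;\liminf_k\int_\Omega j(x,v_{h_k,f},\nabla v_{h_k,f})\,dx.
\]

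The boundary term is the main obstacle, because $h_k$ itself does not converge in any $L^p$ sense: only $b_{h_k}$ converges weakly-$^{*}$. I would change variables $b\in(0,\beta]\mapsto h=1/b-1/\beta$, and define $\Phi(\sigma,y,b):=g(\sigma,y,1/b-1/\beta)$. Since $b\mapsto 1/b-1/\beta$ is convex and decreasing on $(0,\beta]$ while $z\mapsto g(\sigma,y,z)$ is convex and increasing on $\R^{+}$ by (c), the composition $\Phi(\sigma,y,\cdot)$ is convex on $(0,\beta]$, and $(y,b)\mapsto\Phi(\sigma,y,b)$ is jointly continuous by (b). Combined with the strong $L^2$-convergence of the traces of $v_{h_k,f}$ and the weak-$^{*}$ (hence weak $L^{1}$) convergence of $b_{h_k}$, a classical Ioffe-type lower semicontinuity theorem for integrands convex in one variable and continuous in the other - applicable after subtracting the nonnegative affine lower bound provided by (e) - gives
\[
\int_{\partial\Omega}\Phi(\sigma,v_{\bar h,f},b_{\bar h})\,d\Hn\;\le\;\liminf_k\int_{\partial\Omega}\Phi(\sigma,v_{h_k,f},b_{h_k})\,d\Hn,
\]
which reads $\int_{\partial\Omega}g(\sigma,v_{\bar h,f},\bar h)\,d\Hn\le\liminf_k\int_{\partial\Omega}g(\sigma,v_{h_k,f},h_k)\,d\Hn$. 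Adding the two inequalities yields $E(\bar h)\le\liminf_k E(h_k)=\inf_{\mathcal{H}_m}E$, so $\bar h$ is a minimiser.
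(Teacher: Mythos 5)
Your proof is correct and follows essentially the same route the paper sketches: the direct method combined with joint lower semicontinuity of $(v,b)\mapsto\int_\Omega j(x,v,\nabla v)\,dx+\int_{\partial\Omega} g(\sigma,v,1/b-1/\beta)\,d\Hn$ under strong $H^1$ convergence (via \autoref{lemma conv L2 h} and \autoref{conv h1}) and weak-$^{*}$ $L^\infty$ convergence of $b$. You merely make explicit what the paper asserts in one line, supplying Fatou for the volume term and an Ioffe-type theorem (after verifying that convexity and monotonicity of $g$ combined with the convexity of $b\mapsto 1/b-1/\beta$ give convexity of the composed boundary integrand in $b$) for the boundary term.
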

\begin{proof}
By classical results and assumptions (a) and (d),  the functional
    \[v\in H^1(\Om)\mapsto\int_\Om j(x,v,\nabla v)\,dx\]
    is lower semicontinuous in the strong $H^1(\Om)$ topology.
    Similarly, since by assumption (c), the function 
    \[w\in\R^+\mapsto g\left(\sigma,y,\dfrac{1}{w}-\dfrac{1}{\beta}\right)\]
    is convex for every $(\sigma,y)\in\partial\Omega\times\R$, we have that, by assumption (b) and (e), the functional
    \[(v,b)\in L^2(\partial\Omega)\times \mathcal{B}_m(\partial\Omega)\mapsto\int_{\partial\Omega} g\left(\sigma,v,\dfrac{1}{b}-\dfrac{1}{\beta}\right)\,d\Hn,\]
    is lower semicontinuous with respect to the strong convergence in $L^2(\partial\Omega)$ and the weak-* convergence in $L^\infty(\partial\Omega)$. Hence, the assertion follows from \autoref{lemma conv L2 h} (and \autoref{conv h1}).
\end{proof}

\begin{teor}\label{cont lambda(h)}
     Let $\{h_k\}\subset \mathcal{H}_m$ such that  $h_k$ converges to $h$ in $\mathcal{H}_m$. Let $\Set{\lambda^j(h_k)}$ and $\Set{\lambda^j(h)}$ denote the sequences of eigenvalues of problems \eqref{eq forte autoval limite}, counted with multiplicity. Let $\{u^j_{h_k}\}$ be an orthonormal basis of eigenfunctions associated with problem \eqref{eq forte autofunz}. Then the following properties hold:
\begin{enumerate}[(i)]
    \item For every $j\in\mathbb{N}$, 
    \begin{equation}\label{convergenza lambda}
        \lim_{k\to +\infty}\lambda^j(h_k)=\lambda^j(h).
    \end{equation}
    \item There exist $\set{h_{k_i}}$ a subsequence of $\set{h_k}$ and $\{u^j(h)\}$ an orthonormal basis of eigenfunctions associated with problem \eqref{eq forte autoval limite} such that 
    \begin{equation}\label{convergenza estratta autofunzioni lambd h}
        \lim_{i\to+\infty}\norma{u^j_{h_{k_i}}- u^j_{h}}_{L^2(\Om)}= 0.
    \end{equation}
  \item Assume that $\lambda$ is an eigenvalue of multiplicity $l$ for problem \eqref{eq forte autoval limite} such that \[\lambda=\lambda^j(h)=\lambda^{j+1}(h)=\dots=\lambda^{j+l}(h).\] 
  Then for all $w$ such that $w$ is an eigenfunction associated to $\lambda$ for \eqref{eq forte autoval limite}, there exists a sequence $\set{w_k}$ such that  $w_k\in \spn\{u^j(h_k),\dots ,u^{j+l}(h_k)\}$ and
  \begin{equation}\label{conv ad autofunzione lambd h}
      \lim_{k\to +\infty}\norma{ w_k- w}_{L^2(\Om)}= 0.
  \end{equation}
  \end{enumerate}
\end{teor}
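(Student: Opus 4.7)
The plan is to apply the abstract spectral convergence result \autoref{convergenza spettri astratto} with the ambient Hilbert space $H_\eps = H_0 = L^2(\Om)$ independent of the parameter, the restriction operator $R_k = \mathrm{Id}_{L^2(\Om)}$, $\mathcal{A}_k = \mathcal{T}_{h_k}$, $\mathcal{A}_0 = \mathcal{T}_h$, and $\mathcal{V} = L^2(\Om)$. With these choices \Href{H1} is trivial. The eigenvalues $\mu^j_k = 1/\lambda^j(h_k)$ and $\mu^j = 1/\lambda^j(h)$ of the resolvent operators inherit the ordering needed in \autoref{convergenza spettri astratto}, so conclusions (i)--(iii) of that theorem translate directly into the assertions \eqref{convergenza lambda}, \eqref{convergenza estratta autofunzioni lambd h}, and \eqref{conv ad autofunzione lambd h}, with $R_k u^j$ read simply as $u^j$.

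To verify \Href{H2} I would test the weak form of \eqref{eq forte limite} with $v_{h_k,f}$ itself, obtaining
\[\int_\Om |\nabla v_{h_k,f}|^2\,dx + \int_{\partial\Om} b_{h_k}\, v_{h_k,f}^2\,d\Hn = \int_\Om f\, v_{h_k,f}\,dx,\]
and then combine the uniform Poincaré inequality \eqref{poinc unif Hm} with Young's inequality to obtain $\norma{v_{h_k,f}}_{L^2(\Om)} \le C\norma{f}_{L^2(\Om)}$ with $C$ independent of $k$; positivity, compactness and self-adjointness of each $\mathcal{T}_{h_k}$ are standard. Assumption \Href{H3} asks for the strong $L^2(\Om)$-convergence $\mathcal{T}_{h_k}f\to\mathcal{T}_h f$ for every $f\in L^2(\Om)$, which is precisely the content of \autoref{lemma conv L2 h}, so no further work is needed.

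The key step is \Href{H4}. Given $f_k\in L^2(\Om)$ with $\sup_k \norma{f_k}_{L^2(\Om)}<+\infty$, I would apply the same energy identity with $f$ replaced by $f_k$ to obtain a uniform bound on the Dirichlet energy, and, via \eqref{poinc unif Hm}, also on $\norma{v_{h_k,f_k}}_{L^2(\Om)}$. Thus $v_{h_k,f_k}$ is bounded in $H^1(\Om)$ uniformly in $k$, and Rellich--Kondrachov compactness extracts a subsequence converging strongly in $L^2(\Om)$ to some $w_0\in\mathcal{V}$, as required.

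I do not expect a genuine obstacle: all the quantitative tools—the uniform Poincaré inequality \eqref{poinc unif Hm}, the $L^2$-continuity from \autoref{lemma conv L2 h}, and the Rellich compact embedding—are already in place, so the proof amounts to checking the abstract hypotheses. The mildly delicate point is the translation of the abstract conclusions to the present setting, where $R_k$ is the identity and the eigenvalues of the resolvents are reciprocals of the eigenvalues of the original problem; this bookkeeping is routine and yields items (i)--(iii) without further analytic input.
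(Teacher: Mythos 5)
Your proposal follows essentially the same route as the paper: apply \autoref{convergenza spettri astratto} with a fixed Hilbert space $L^2(\Om)$ and $R_k=\mathrm{Id}$ (so \Href{H1} is trivial), derive \Href{H2} and \Href{H4} from the uniform Poincar\'e inequality \eqref{poinc unif Hm} exactly as in \autoref{ipotesi conv astratta}, and invoke \autoref{lemma conv L2 h} for \Href{H3}. The paper merely states this in compressed form, while you spell out the energy estimate and the Rellich--Kondrachov step; the argument is the same.
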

\begin{proof}

It is sufficient to prove that the sequence of operators $\Set{\mathcal{T}_{h_k}}$ and $\mathcal{T}_h$ satisfy the assumption \Href{H1}--\Href{H4} of \autoref{convergenza spettri astratto}. 

Since the Hilbert space is not varying, \Href{H1} is trivial. Moreover, using the same technique as in \autoref{ipotesi conv astratta}, we can deduce assumptions \Href{H2} and \Href{H4} using the Poincaré-like inequality \eqref{poinc unif Hm}. Finally, the assumption \Href{H3} is satisfied by \autoref{lemma conv L2 h}. 
\end{proof}

\begin{proof}[Proof of \autoref{optlambda}]
We have defined a topology on $\mathcal{H}_m$ for which $\mathcal{H}_m$ is compact and the eigenvalues $\lambda^j$ are continuous, as shown in \autoref{cont lambda(h)}. Therefore, we only need to prove that there exists a minimum for $\lambda^j$ that saturates the mass constraint.

Consider $h_1$ and $h_2$ in $\mathcal{H}_m$ such that $h_1\leq h_2$, then  $b_{h_1}\geq b_{h_2}$. Using the min-max characterization of the eigenvalues, there exists  $\bar{V}$ a $j-$dimensional subspace of $H^1(\Om)$ such that 
\[\lambda^j(h_1)=\max_{v\in \bar{V}\setminus\{0\}}\dfrac{\displaystyle\int_{\Om}|\nabla v|^2\,dx+\int_{\partial\Om} b_{h_1}v^2\,d\Hn}{\displaystyle\int_\Om v^2\,dx}\geq \max_{v\in \bar{V}\setminus\{0\}}\dfrac{\displaystyle\int_{\Om}|\nabla v|^2\,dx+\int_{\partial\Om} b_{h_2}v^2\,d\Hn}{\displaystyle\int_\Om v^2\,dx}\geq \lambda^j(h_2)\]
Hence, we have monotonicity with respect to $h$, and we can deduce the existence of a minimum that saturates the mass constraint.
\end{proof}

\section{Asymptotic development}\label{dev}
In this section, we will prove \autoref{asymptotic}, which provides an asymptotic estimate of the eigenvalues $\lambda_\eps^j(h)$. 
To simplify the notation, we will drop the dependence on $h$. As mentioned in the introduction, we will use the \textit{Lemma on small eigenvalues}, originally introduced in \cite{CV86}. This lemma provides a versatile framework for investigating the asymptotic properties of families of eigenvalues. In particular, we will use the following refinement of the \textit{Lemma}, proven in \cite[Proposition B.1]{ALM22} (see also \cite{C95, O25} for other examples of applications). 
\begin{lemma}(Lemma on small eigenvalues)\label{LSE}
Let $(\mathcal H,(\cdot,\cdot)_{\mathcal H})$ be a real Hilbert space and 
let $\mathcal D$ be a dense subspace of $\mathcal{H}$. 
Let $q:\mathcal D \times \mathcal D \to \mathbb R$ be a symmetric bilinear form bounded from below in the sense that
\[
\inf_{u\in\mathcal D\setminus\{0\}} 
\frac{q(u,u)}{\|u\|_{\mathcal H}^2} > -\infty.
\]

Assume that $q$ has a non-decreasing sequence of eigenvalues 
$\{\eta^k\}$ and that there exists an orthonormal basis 
$\{w_k\}$ of $\mathcal H$ with $w_k\in\mathcal D$ such that
\[
q(w_k,v)=\eta^k (w_k,v)_{\mathcal H}
\qquad \text{for every } k\ge1,\; v\in\mathcal D.
\]

Let $l\in\mathbb{N}_0$ and let $F\subset\mathcal D$ be a subspace of dimension $l+1$. 
Suppose that there exist $i\in\mathbb N$ and $\gamma>0$ such that

\begin{itemize}
\item[\Alabel{1}.] $\eta^k\le -\gamma$ for $k\le i-1$, 
\quad $|\eta^k|\le\gamma$ for $k=i,\dots,i+l$, 
\quad and \quad$\eta^k\ge\gamma$ for $k\ge i+l+1$;

\item[\Alabel{2}.] For every $u\in\mathcal{D}$ and $v\in F$ 
\[\dfrac{\abs{q(u,v)}}{\norma{u}_\mathcal{H} \norma{v}_\mathcal{H}} \le\delta,\qquad \text{with}\qquad \delta<\dfrac{\gamma}{\sqrt{2}}.\]

\end{itemize}

Let $\{\xi^j\}_{j=0,\dots,l}$ be the eigenvalues of $q$ restricted to $F$, 
ordered increasingly, that is,
\[
\xi^j := 
\min_{\substack{G\subset F \\ \dim G=j+1}}
\;\max_{\substack{u\in G \\ u\neq0}}
\frac{q(u,u)}{\|u\|_{\mathcal H}^2}.
\]
Then
\[
|\eta^{i+j}-\xi^j|
\le \frac{4\delta^2}{\gamma}
\qquad \text{for all } j=0,\dots,l.
\]

\end{lemma}

Let $\lambda$ be an eigenvalue of multiplicity $l+1$ for the limit eigenvalue problem \eqref{eq forte autoval limite}, that is
\[\lambda=\lambda^i(h)=\dots=\lambda^{i+l}(h)\]
for some $i\in\N$, and denote by $E(\lambda)$ the associated eigenspace in $H^1(\Omega)$. We want to apply the lemma on small eigenvalues to the quadratic form on $H^1(\Om_\eps)$
\[q_{\eps,\lambda}(v,w)=\int_{\Om_\eps}a(d)\nabla v\nabla w\,dx+\beta\int_{\partial\Om_\eps}vw\,d\Hn-\lambda\int_{\Om_\eps}vw\,dx,\]
to compute
\[\lim_{\eps\to0^+}\dfrac{\lambda_\eps^{i+j}(h)-\lambda^{i+j}(h)}{\eps},\]
for $j=0,\dots,l$. In the following we will denote by $q_\lambda=q_{\eps,\lambda}$.\medskip

We start by noticing that for every $j\in\N$ the $j$-th eigenvalues of $q_\lambda$  is exactly $\eta^j_\eps=\lambda^j_\eps(h)-\lambda^j(h)$. By \autoref{cor3.6}, we have that  $\eta^j_\eps$ converges to zero, as $\eps$ goes to zero, if and only if $j=i,\dots,i+l$, so that assumption \Href{C1} is verified for $\eps$ sufficiently small.\medskip

For every $\eps>0$, and $u\in E(\lambda)$ let 
\[\hat{u}_\eps(x):=\begin{cases}
    u(x) &\text{if }x\in \bar{\Omega},\\[5pt]
    u(\sigma(x))\left(1-\dfrac{\beta d(x)}{\eps(1+\beta h(x))}\right) &\text{if } x\in\Sigma_\eps,\\[5pt]
    0 &\text{if }x \in \R^n\setminus\Om_\eps,
\end{cases}\]
and consider the $l+1$ dimensional subspace of $H^1(\Om_\eps)$ defined as
\[F:=\Set{\mathcal{T}_{\eps,h}\hat{u}_\eps \colon u\in E(\lambda)},\]
where we recall that for a function $f\in L^2(\Om_\eps)$, $\mathcal{T}_{\eps,h}f$ denotes the unique weak solution to the Poisson problem \eqref{eq forte}, that is
\[\int_{\Om_\eps}a(d)\nabla \varphi\nabla \mathcal{T}_{\eps,h}f\,dx+\beta\int_{\partial\Om_\eps}\varphi\mathcal{T}_{\eps,h}f\,d\Hn=\int_{\Om_\eps}f\varphi\,dx,\]
for every $\varphi\in H^1(\Om_\eps)$.\medskip

For every $v\in F$ there exists $u\in E(\lambda)$ such that $v=\mathcal{T}_{\eps,h}(\lambda\hat{u}_\eps)$, and, by \autoref{vepsfeps}, $\mathcal{T}_{\eps,h}(\lambda\hat{u}_\eps)$ converges to $\mathcal{T}_{h}(\lambda u)=u$ so that
\[\lim_{\eps\to0^+}\norma{\mathcal{T}_{\eps,h}(\lambda\hat{u}_\eps) - \hat{u}_\eps}_{L^2(\Omega_\eps)}=0.\]
In order to prove our result, we will need the following estimate. For every $u\in E(\lambda)$ with $\norma{u}_{L^2(\Om_\eps)}=1$
 \begin{equation}\label{5.1strong}
      \int_{\Om_\eps} a(d) \abs{\nabla (\mathcal{T}_{\eps,h}(\lambda\hat{u}_\eps)-\hat{u}_\eps)}^2\,dx+\beta\int_{\partial \Om_\eps} (\mathcal{T}_{\eps,h}(\lambda\hat{u}_\eps)-\hat{u}_\eps)^2\,d\Hn=o(\eps).
   \end{equation}

Such an estimate, by the uniform Poincaré-type inequalities of \autoref{lemma:poinc}, in particular, implies the existence of $\delta=\delta(\eps)$, with $\delta^2=o(\eps)$, such that for every $u\in E(\lambda)$,
\begin{equation}\label{sel1}
    \norma{\mathcal{T}_{\eps,h}(\lambda\hat{u}_\eps) - \hat{u}_\eps}_{L^2(\Omega_\eps)}\le \norma{u}_{L^2(\Om)}\delta. 
\end{equation}

By definition of $F$, we have that for every $v\in F$  and $w\in H^1(\Om_\eps)$

\begin{equation}\label{stima q_lambda v w}
q_\lambda(v,w)=\lambda\int_{\Om_\eps}w\left(\hat{u}_\eps-\mathcal{T}_{\eps,h}(\lambda\hat{u}_\eps)\right)\,dx\le \lambda \norma{w}_{L^2(\Om_\eps)} \norma{\mathcal{T}_{\eps,h}(\lambda\hat{u}_\eps) - \hat{u}_\eps}_{L^2(\Omega_\eps)},
\end{equation}
for some $u\in E(\lambda)$. If \eqref{sel1} holds, we then have 
\[\dfrac{q_\lambda(v,w)}{\norma{v}_{L^2(\Om_\eps)} \norma{w}_{L^2(\Om_\eps)} }\le \lambda  \dfrac{\norma{u}_{L^2(\Om)} }{\norma{\mathcal{T}_{\eps,h}(\lambda\hat{u}_\eps)}_{L^2(\Om_\eps)} }\delta \le C\delta,\]
where we remark that in the last inequality we used the fact that the space $E(\lambda)$ is finite-dimensional. Hence, \eqref{sel1} implies assumption \Href{C2} for $\eps$ sufficiently small. Denoting by $\set{\xi_\eps^j}$ the eigenvalues of the restriction of $q_\lambda$ to $F$ in increasing order, and using the lemma on small eigenvalues, we have that for every $j=0,\dots,l$
\[\abs*{\dfrac{\lambda_\eps^{i+j}(h)-\lambda^{i+j}(h)}{\eps}-\dfrac{\xi_\eps^j}{\eps}}\le \dfrac{C \delta^2}{\eps}=o(1),\]
so that we are left to prove that
\[\lim_{\eps\to0^+}\dfrac{\xi_\eps^j}{\eps}=\zeta_\lambda^{j}(h),\]
 the $(j+1)$-th eigenvalue of the bilinear form 
\[Q_\lambda(u,v) =
\int_{\partial \Om}\dfrac{\beta Hh(2+\beta h)}{2(1+\beta h)^2}uv\,d\Hn -\lambda\int_{\partial\Om}\dfrac{h(3+3\beta h+\beta^2h^2)}{3(1+\beta h)^2}uv\,d\Hn,\]
on the $(l+1)$-dimensional space $E(\lambda)$.\medskip

To prove estimate \eqref{5.1strong}, we adapt the idea in \cite{AC25} (see also \cite{ACNT24}) of "stretching" the function, $v_{\eps,f}=\mathcal{T}_{\eps,h}(f)$, solution to the Poisson problem, to the reference domain $\Sigma_1$ (that is $\Sigma_\eps$ for $\eps=1$), and prove that, in the limit, they are linear in the directions normal to the boundary of $\Omega$. In the particular case of $\mathcal{T}_{\eps,h}(\lambda\hat{u}_\eps)$, we obtain that the "stretched" function converges to the function $\hat{u}_1$. 

Let $d$ be the (signed) distance function from the boundary of $\Omega$ and let 
\[
\Gamma_{t}=\Set{x\in\R^n|\, d(x)<t}\setminus\Omega.
\]
The regularity of $\Omega$ ensures that $\Omega$ satisfies a uniform exterior ball condition, and in particular, there exists $d_0>0$ such that $d\in C^{1,1}(\Gamma_{d_0})$. We define the \emph{stretching diffeomorphism} $\Psi_\eps\in C^{0,1}(\Gamma_{d_0};\Gamma_{\eps d_0})$ as the function defined as
\[
\begin{split}
\Psi_\eps(z)&=\sigma(z)+\eps d(z)\nu_0(z) \\[5 pt] 
&=z+(\eps-1)d(z)\nu_0(z).
\end{split}
\]

 Let $f\in L^2(\Omega)$ and assume $f=0$ in $\R^n\setminus\Omega$, and, in the notation of \autoref{spectr}, let $v_\eps=v_{\eps,f}$ and $v_0=v_f$.  In \cite{AC25}, they prove the following theorem.
\begin{teor}[\cite{AC25}, Theorem 1.1]\label{teorAC}
Fix a positive function $h\in C^{1,1}(\Gamma_{d_0})$ such that $h(x)=h(\sigma(x))$. The function 
    \[
    \tilde{v}_\eps(z)=\begin{cases}
        v_\eps(z) &\text{if }z\in\Omega,\\[5 pt]
        v_\eps(\Psi_\eps(z)) &\text{if }z\in\Sigma_1,
    \end{cases}
    \] 
 is equibounded in $H^1(\Omega_1)$ and, up to a subsequence, converges weakly in $H^1(\Omega_1)$, as $\eps$ goes to $0$, to the function 
    \begin{equation}\label{limfun}\tilde{v}_0(z)=\begin{cases} 
v_0(z) &\text{if }z\in\Omega,\\[5 pt]
    v_0(\sigma(z))\left(1-\dfrac{\beta d(z)}{1+\beta h(z)}\right) &\text{if }z\in\Sigma_1.
\end{cases}\end{equation}
\end{teor}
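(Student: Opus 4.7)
The plan is to transplant the problem from the shrinking layer $\Sigma_\eps$ to the fixed reference layer $\Sigma_1$ via $\Psi_\eps$, so that in the stretched variables the Laplacian becomes strongly anisotropic and degenerates, as $\eps\to 0$, to the second normal derivative alone. This forces the limit $\tilde v_0$ on $\Sigma_1$ to be affine along each fibre $\sigma+t\nu_0(\sigma)$, and the boundary conditions inherited from \eqref{eq forte} will then identify the slope and yield the formula \eqref{limfun}.

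First I would prove equiboundedness of $\tilde v_\eps$ in $H^1(\Omega_1)$. On $\Omega$ the estimate $\|v_\eps\|_{H^1(\Omega)}\le C$ is immediate from \eqref{boundedness norma X} and \autoref{lemma:poinc}. On $\Sigma_1$ I would use that $D\Psi_\eps$ acts as the identity in tangential directions and as multiplication by $\eps$ in the normal one (up to $O(\eps)$-curvature corrections), so that $\det D\Psi_\eps=\eps(1+O(\eps))$ and the pullback of $|\nabla v_\eps|^2$ decomposes as $|\nabla_\tau\tilde v_\eps|^2+\eps^{-2}|\partial_{\nu_0}\tilde v_\eps|^2$. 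Changing variables and combining with \eqref{eq:epspoinc2} and \eqref{boundedness norma X} gives $\|\tilde v_\eps\|_{L^2(\Sigma_1)}\le C$ and $\|\partial_{\nu_0}\tilde v_\eps\|_{L^2(\Sigma_1)}\le C$ with no effort. The tangential contribution is the main obstacle: after pullback, $\|\nabla_\tau\tilde v_\eps\|_{L^2(\Sigma_1)}^2$ is comparable to $\eps^{-1}\int_{\Sigma_\eps}|\nabla_\tau v_\eps|^2$, which the naive energy bound only controls by $O(1/\eps^2)$. I would recover the sharper estimate $\int_{\Sigma_\eps}|\nabla_\tau v_\eps|^2\le C\eps$ by a tangential difference-quotient argument: since $\partial\Omega$ is $C^{1,1}$ and $h$ Lipschitz, $\Omega_\eps$ is invariant, up to $O(\eps)$-corrections, under a compactly supported tangential flow, and differentiating the weak formulation of \eqref{eq forte} along the flow produces an energy inequality of the same type as \eqref{boundedness norma X} for the tangential derivative $\partial_\tau v_\eps$; \eqref{eq:epspoinc2} applied to $\partial_\tau v_\eps$ then yields the desired $O(\eps)$ bound on its $L^2(\Sigma_\eps)$-norm.

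Once equiboundedness in $H^1(\Omega_1)$ is in place, I would extract a weakly convergent subsequence $\tilde v_{\eps_k}\rightharpoonup\tilde v_0$. The restriction of $\tilde v_\eps$ to $\Omega$ is $v_\eps$, which converges strongly in $L^2(\Omega)$ to $v_0$ by the $\Gamma$-convergence argument used for \Href{H3} in \autoref{ipotesi conv astratta}, so $\tilde v_0|_\Omega=v_0$. On $\Sigma_1$, $v_\eps$ is harmonic because $f\equiv 0$ outside $\Omega$, and pulling $\Delta v_\eps=0$ back through $\Psi_\eps$ gives an anisotropic equation whose weak form, after multiplication by $\eps$, reduces in the limit to
\[
\int_{\Sigma_1}\partial_{\nu_0}\tilde v_0\,\partial_{\nu_0}\varphi\,dz=0\qquad\forall\,\varphi\in C^\infty_c(\Sigma_1),
\]
because the only surviving coefficient is that of the second normal derivative. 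Hence $\partial^2_{\nu_0}\tilde v_0=0$ distributionally and $\tilde v_0$ is affine in $d(z)$ along each fibre.

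It only remains to fix the two integration constants. Trace continuity across $\partial\Omega$ combined with the strong $L^2$-convergence $v_\eps\to v_0$ yields $\tilde v_0(z)=v_0(\sigma(z))$ at $d(z)=0$. The outer Robin condition $\eps\partial_{\nu_\eps}v_\eps+\beta v_\eps=0$ on $\partial\Omega_\eps$ becomes, in the stretched variables (where $\partial_{\nu_\eps}v_\eps=\eps^{-1}\partial_{\nu_0}\tilde v_\eps$ to leading order), $\partial_{\nu_0}\tilde v_\eps+\beta\tilde v_\eps=0$ at $d=h$ up to $O(\eps)$, and passes to the limit. Writing $\tilde v_0(\sigma+t\nu_0)=v_0(\sigma)+B(\sigma)t$ and enforcing this condition gives $B(\sigma)=-\beta v_0(\sigma)/(1+\beta h(\sigma))$, which is exactly \eqref{limfun}. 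Since the limit is unique, convergence holds along the whole family. The hardest step throughout is the $O(\sqrt{\eps})$ control of the tangential oscillation of $v_\eps$ on $\Sigma_\eps$; everything downstream is a rather mechanical consequence of it.
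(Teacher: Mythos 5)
Your proposal follows the same overall strategy as the paper's proof: stretch the layer via $\Psi_\eps$, establish equiboundedness in $H^1(\Omega_1)$ by a sharp tangential gradient estimate, then identify the limit. The paper organizes this into three steps (limit equation on $\Sigma_1$ from the weak form, $H^1/H^2$ density estimates yielding equiboundedness, identification via a uniqueness argument) whereas you do equiboundedness first and then limit identification; that reordering is harmless. Your observation that the naive energy bound only gives $\|\nabla_\tau\tilde v_\eps\|_{L^2(\Sigma_1)}^2=O(\eps^{-2})$, and that the missing ingredient is $\int_{\Sigma_\eps}|\nabla_\tau v_\eps|^2\le C\eps$, is exactly right and is what the paper's $H^2$ density estimate \eqref{H2eps} provides. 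Your difference-quotient plan is the right idea, though the phrase ``$\Omega_\eps$ is invariant, up to $O(\eps)$-corrections, under a compactly supported tangential flow'' is imprecise --- the layer has variable thickness $\eps h(\sigma)$, so one cannot translate tangentially while staying inside $\Sigma_\eps$; the paper first flattens $\partial\Omega$ and $\partial\Omega_\eps$ simultaneously with a uniform-in-$\eps$ diffeomorphism (\autoref{lemma flattering}) and only then takes difference quotients in the flattened tangential variables.

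There is, however, a genuine gap in your limit identification. You test only against $\varphi\in C_c^\infty(\Sigma_1)$, which yields $\partial_{\nu_0}^2\tilde v_0=0$ (affine in $d$ on each fibre), and then try to fix the slope by ``passing to the limit'' in the pointwise Robin condition $\partial_{\nu_0}\tilde v_\eps+\beta\tilde v_\eps=0$ at $d=h$. This last step is not justified as stated: you only have $\tilde v_\eps\rightharpoonup\tilde v_0$ weakly in $H^1(\Omega_1)$, which gives strong convergence of the trace $\tilde v_\eps\to\tilde v_0$ in $L^2(\partial\Omega_1)$, but says nothing about the trace of $\partial_{\nu_0}\tilde v_\eps$ on $\partial\Omega_1$ --- that trace is not even well-defined from an $H^1(\Omega_1)$ bound alone, and you have not established the uniform control on $\partial^2_{\nu_0}\tilde v_\eps$ that would be needed to make sense of it and pass it to the limit. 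The clean fix is what the paper does in equation \eqref{eqlimweak}: test against $\varphi\in H^1(\Sigma_1)$ vanishing only on $\partial\Omega$ (not on $\partial\Omega_1$). Then the $\eps$-rescaled weak form produces, together with the normal-derivative bulk term, the boundary contribution $\beta\int_{\partial\Omega_1}\tilde v_\eps\varphi\,J_\eps\,d\Hn$, which passes to the limit by compactness of the trace operator. The resulting variational problem on $\Sigma_1$ has a unique solution once the Dirichlet datum at $\partial\Omega$ is fixed (by $L^2(\Omega)$-convergence of $v_\eps$ to $v_0$), and that solution is precisely \eqref{limfun} --- the Robin condition is encoded in the weak form rather than recovered pointwise. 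Your integration-constant computation then becomes a verification rather than a derivation, and the argument closes.
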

The proof of \autoref{teorAC} can be divided into three main steps.\medskip

In the first step, one assumes that the family $\set{\tilde{v}_\eps}$ converges weakly in $H^1(\Omega)$ to a function $\tilde{v}$. Then $\tilde{v}$, in $\Sigma_1$, satisfies the equation 
\begin{equation}\label{eqlimweak}
\int_{\Sigma_1} (\nabla\tilde{v}\cdot\nu_0)(\nabla\varphi\cdot\nu_0)J_0(z)\,dz+\beta\int_{\partial\Omega_1} \tilde{v}(\zeta)\varphi(\zeta)\dfrac{J_0(\zeta)}{\sqrt{1+\abs{\nabla h(\zeta)}^2}}\,d\Hn(\zeta)=0, \end{equation}
for every $\varphi\in H^1(\Sigma_1)$ such that $\varphi=0$ on $\partial\Omega$, where 
\[J_0(z)=\prod_{i=1}^{n-1}\dfrac{1}{1+h(z)k_i(\sigma(z))},\]
and $k_i$ are the principal curvatures of $\partial\Omega$. The solution to \eqref{eqlimweak} is uniquely determined by the Dirichlet boundary datum on $\partial\Omega$. Namely, for every $w\in H^1(\Sigma_1)$ the solution to \eqref{eqlimweak} with $\tilde{v}=w$ on $\partial\Omega$ is 
\[\tilde{v}(z)=w(\sigma(z))\left(1-\dfrac{\beta d(z)}{1+\beta h(z)}\right).\]
\medskip

The second step is to prove uniform $H^1$ and $H^2$ density estimates of the type 
\[\int_{\Omega_\eps}a(d)\abs{\nabla v_\eps}^2\,dx+\beta \int_{\partial\Omega_\eps}v_\eps^2\,d\Hn\le C(\Omega,h,f,\beta)\]
\[\int_{\Omega_\eps}a(d)\abs{D^2 v_\eps}^2\,dx+\beta \int_{\partial\Omega_\eps}\abs{\nabla ^{\partial\Omega_\eps}v_\eps}^2\,d\Hn\le C(\Omega,h,f,\beta),\]
where $\nabla^{\partial\Omega_\eps} v := \nabla v - \left(\nabla v\cdot\nu_\eps\right)\nu_\eps$ is the tangential part of the gradient on $\partial\Omega_\eps$. Such estimates imply that 
\begin{equation}\label{eq:entan}\int_{\Sigma_\eps} \abs{\nabla^{\partial\Omega} v_\eps}^2\,dx\le \eps C(\Omega,h,f,\beta).\end{equation}
By direct computation, one can then prove that 
\[
\int_{\Sigma_1}\abs{\nabla^{\partial\Omega}\tilde{v}_\eps}^2\,dz\le \dfrac{C}{\eps}\int_{\Sigma_\eps}\abs{\nabla^{\partial\Omega} v_\eps}^2\,dx,
\]
\[
\int_{\Sigma_1}\abs{\nabla \tilde{v}_\eps\cdot\nu_0}^2\,dz\le\eps C\int_{\Sigma_\eps} \abs{\nabla v_\eps}^2\,dx,    
\]
and
\[
\int_{\partial\Omega_1}\tilde{v}_\eps^2\,d\Hn\le C\int_{\partial\Omega_\eps}v_\eps^2\,d\Hn.
\]
which together with \eqref{eq:entan} and the $H^1$ estimates imply 
\[\int_{\Omega_1}\abs{\nabla \tilde{v}_\eps}^2\,dx+\beta\int_{\partial\Omega_1}\tilde{v}_\eps^2\,dx\le C(\Omega,h,f,\beta),\]
and, hence, the equiboundedness in $H^1(\Omega_1)$.\medskip

The third and final step goes as follows. From the second step, we have that, up to a subsequence, $\set{\tilde{v}_\eps}$ converges weakly in $H^1(\Omega_1)$ to a function $\tilde{v}$. On the other hand, we already know that $v_\eps$ converges to $v_0$ weakly in $H^1(\Om)$, then, by the first step, if $\tilde{v}_\eps$ converges to $\tilde{v}$ weakly in $H^1(\Omega)$, then $\tilde{v}$ is necessarily the $\tilde{v}_0$ defined in \eqref{limfun}. Finally, we notice that the limit $\tilde{v}_0$ does not depend on the subsequence.\medskip

To apply \autoref{teorAC} to the sequence $\mathcal{T}_{\eps,h}(\lambda\hat{u}_\eps)$, however, we need to observe that the assumptions can be slightly weakened. Namely, we have the following theorem.

\begin{teor}\label{teorACeps}
Let $h\in C^{1,1}(\Gamma_{d_0})$ be a positive function such that $h(x)=h(\sigma(x))$. Let $\mathcal{G}=\set{f_\eps}\subset L^2(\Omega_\eps)$ and assume that there exist a constant $C_\mathcal{G}>0$ such that for every $\eps>0$
\begin{equation}\label{fespbound}
 \int_{\Omega_\eps} f_\eps^2\,dx\le C_\mathcal{G},
\end{equation}
and
\begin{equation}\label{fespconc}
   \int_{\Sigma_\eps}f_\eps^2\,dx\le \eps C_\mathcal{G}.
\end{equation}
Let $v_\eps=v_{\eps,f_\eps}$ and 
    \[
    \tilde{v}_\eps(z)=\begin{cases}
        v_\eps(z) &\text{if }z\in\Omega,\\[5 pt]
        v_\eps(\Psi_\eps(z)) &\text{if }z\in\Sigma_1.
    \end{cases}
    \] 
 Then the family $\set{\tilde{v}_\eps}$ is equibounded in $H^1(\Omega_1)$. Moreover, up to a subsequence, $\set {f_\eps}$ converges weakly in $L^2$ to a function $f$ with $f=0$ almost everywhere in $\R^n\setminus\Omega$ and, up to another subsequence, $\set{\tilde{v}_\eps}$ converges weakly in $H^1(\Omega_1)$, as $\eps$ goes to $0$, to the function 
    \begin{equation}\label{limfun1}\tilde{v}_f(z)=\begin{cases} 
v_f(z) &\text{if }z\in\Omega,\\[5 pt]
    v_f(\sigma(z))\left(1-\dfrac{\beta d(z)}{1+\beta h(z)}\right) &\text{if }z\in\Sigma_1.
\end{cases}\end{equation}
\end{teor}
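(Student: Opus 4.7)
My plan is to repeat the three-step proof of \autoref{teorAC} recalled above, paying attention to the two places where the right-hand side enters: the a priori estimates for $v_\eps$, and the weak form of the equation satisfied by the limit on $\Sigma_1$. First, extending each $f_\eps$ by zero outside $\Omega_\eps$, \eqref{fespbound} yields a uniform $L^2(\R^n)$ bound, hence along a subsequence $f_\eps\rightharpoonup f$ weakly in $L^2(\R^n)$. Assumption \eqref{fespconc} gives $\|f_\eps\chi_{\Sigma_\eps}\|_{L^2(\R^n)}=O(\sqrt{\eps})\to0$, so the support of the weak limit is contained in $\overline{\Omega}$, i.e.\ $f=0$ a.e.\ in $\R^n\setminus\Omega$. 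By \autoref{vepsfeps}, along a further subsequence $v_\eps\rightharpoonup v_f$ weakly in $H^1(\Omega)$, where $v_f$ solves \eqref{eq forte limite} with right-hand side $f$.

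Second, I would check that the $H^1$ and $H^2$ density estimates of \cite{AC25} carry over with $f_\eps$ in place of $f$. The $H^1$ bound is exactly \eqref{boundedness norma X}: test \eqref{eq forte} against $v_\eps$, apply \autoref{lemma:poinc}, and use \eqref{fespbound}. The $H^2$-type estimate in \cite{AC25} is obtained by testing \eqref{eq forte} with tangential differences and applying Young's inequality; inspection of that argument shows that the forcing enters only through $\|f_\eps\|_{L^2(\Omega_\eps)}$, which is uniformly controlled by \eqref{fespbound}. The change-of-variable inequalities that transfer the bounds from $\Sigma_\eps$ to the reference layer $\Sigma_1$ are algebraic and depend only on $h$ and $d_0$. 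Combining them as recalled yields the uniform $H^1(\Omega_1)$ bound for $\set{\tilde{v}_\eps}$, and along a further subsequence $\tilde{v}_\eps\rightharpoonup\tilde{v}$ weakly in $H^1(\Omega_1)$.

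Third, I would identify the weak limit $\tilde{v}$. On $\Omega$ one has $\tilde{v}_\eps=v_\eps$, so $\tilde{v}=v_f$ on $\Omega$ and $\tilde{v}(\sigma)=v_f(\sigma)$ on $\partial\Omega$ by continuity of traces. On $\Sigma_1$, I would rerun the first step of the proof recalled after \autoref{teorAC}: for every $\varphi\in H^1(\Sigma_1)$ with $\varphi=0$ on $\partial\Omega$, push $\varphi$ forward via $\Psi_\eps$ to a test function $\varphi_\eps=\varphi\circ\Psi_\eps^{-1}\in H^1(\Sigma_\eps)$ vanishing on $\partial\Omega$ (extended by zero to a test function on $\Omega_\eps$), and write the weak form of \eqref{eq forte}. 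The only new term compared to \cite{AC25} is $\int_{\Sigma_\eps}f_\eps\varphi_\eps\,dx$; since the Jacobian of $\Psi_\eps$ is of order $\eps$, one has $\|\varphi_\eps\|_{L^2(\Sigma_\eps)}^2=O(\eps)$, and Cauchy--Schwarz combined with \eqref{fespconc} gives $\left|\int_{\Sigma_\eps}f_\eps\varphi_\eps\,dx\right|=O(\eps)\to0$. Passing to the limit shows that $\tilde{v}$ satisfies \eqref{eqlimweak} on $\Sigma_1$ with trace $v_f$ on $\partial\Omega$. The uniqueness of solutions to \eqref{eqlimweak} with prescribed Dirichlet datum on $\partial\Omega$ then identifies $\tilde{v}$ with \eqref{limfun1}, and since the limit does not depend on the subsequence, the full family $\tilde{v}_\eps$ converges weakly in $H^1(\Omega_1)$ to $\tilde{v}_f$.

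The main obstacle I anticipate is verifying that the $H^2$-type density estimate from \cite{AC25} depends on the forcing only through its $L^2(\Omega_\eps)$ norm. If that estimate required higher integrability or regularity of the data, the scheme would not close under hypotheses \eqref{fespbound}--\eqref{fespconc} alone. The weaker assumptions of the target theorem are tailored precisely so that only $L^2$ information on $f_\eps$ enters the estimates, so this should reduce to a careful bookkeeping of the arguments in \cite{AC25}.
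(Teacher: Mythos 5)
Your overall scheme matches the paper's: extract a weakly convergent subsequence of $f_\eps$ with limit supported in $\overline\Omega$, carry over the $H^1$ and $H^2$ density estimates, identify the limit of $\tilde v_\eps$ by its equation on $\Sigma_1$, and invoke \autoref{vepsfeps} to identify the limit on $\Omega$. There is, however, a genuine error in the second step. You assert that in the $H^2$-type estimate ``the forcing enters only through $\|f_\eps\|_{L^2(\Omega_\eps)}$, which is uniformly controlled by \eqref{fespbound}.'' This is false, and the non-concentration hypothesis \eqref{fespconc} is essential there, not a spare. When the equation is tested against tangential difference quotients and Young's inequality is used to absorb the gradient term, the right-hand side contribution appears with the weight $1/\eps(y_n)$: schematically,
\[
\mathcal{J}\ \le\ \tfrac{C_1}{8}\int \eps(y_n)\,\bigl(\Delta_k^{-\eta}(\xi^2\Delta_k^\eta w_\eps)\bigr)^2\,dy\ +\ \tfrac{2}{C_1}\int \tfrac{1}{\eps(y_n)}\,p_\eps^2\,dy,
\]
and the second term, on the strip $\{0<y_n<\eps\}$ (i.e.\ $\Sigma_\eps$), is comparable to $\eps^{-1}\int_{\Sigma_\eps}f_\eps^2\,dx$. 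This is bounded only because of \eqref{fespconc}; under \eqref{fespbound} alone it could blow up as $\eps\to0$. The same $1/\eps(y_n)$ weight reappears when you solve the equation for $\partial^2_{nn}w_\eps$. So the $H^2$ estimate depends on \emph{both} hypotheses \eqref{fespbound}--\eqref{fespconc}, and your ``inspection'' claim would not survive the bookkeeping you defer.

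A secondary slip: your last sentence claims that ``the full family $\tilde v_\eps$ converges weakly in $H^1(\Omega_1)$ to $\tilde v_f$.'' This cannot hold for the full family, since $f$ itself is only defined along a subsequence of $f_\eps$ and may depend on that choice (the paper explicitly flags this after the statement). What the argument does give is convergence of the \emph{whole subsequence} of $\tilde v_\eps$ along which $f_\eps\rightharpoonup f$, by uniqueness of the limit given $f$; the statement of the theorem is carefully worded to claim only convergence up to a (possibly further) subsequence.
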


We remark that if $\set{f_\eps}$ is not convergent, then the limit might depend on the subsequence.\medskip

The key ingredient in the proof of \autoref{teorACeps} is establishing an analogue of the energy estimates used in the second step of \autoref{teorAC}, adapted to a sequence of functions. To this end, we require the following theorem, whose proof is deferred to \autoref{energy}.

\begin{restatable}{teor}{COneEnergy}
\label{teor: C11energy}
Let $h\in C^{1,1}(\Gamma_{d_0})$ be a positive function such that $h(x)=h(\sigma(x))$. Let $\mathcal{G}=\set{f_\eps}$ such that \eqref{fespbound} and \eqref{fespconc} hold, then there exists positive constants $\eps_0(\Omega)$, and $C(\Omega,h,C_\mathcal{G},\beta)$ such that if
\[
\eps\norma{h}_{C^{0,1}}\le \eps_0
\]
Let $v_\eps=v_{\eps, f_\eps}$, then
\begin{equation}\label{H1eps}
\int_{\Omega_\eps}a(d)\abs{\nabla v_\eps}^2\,dx+\beta \int_{\partial\Omega_\eps}v_\eps^2\,d\Hn\le C,
\end{equation}
\begin{equation}\label{H2eps}
\int_{\Omega_\eps}a(d)\abs{D^2 v_\eps}^2\,dx+\beta \int_{\partial\Omega_\eps}\abs{\nabla ^{\partial\Omega_\eps}v_\eps}^2\,d\Hn\le C.
\end{equation}
\end{restatable}

\begin{proof}[Proof of \autoref{teorACeps}]
The proof of the theorem follows as the one of \autoref{teorAC}.\medskip

In particular, step one can be proved by the same computations using the non-concentration assumption \eqref{fespconc}. Hence, we have that, if for some sequence $\set{\eps_k}$ going to zero,   $\tilde{v}_{\eps_k}$ converges to a function $\tilde{v}$, then in $\Sigma_1$ 
\[\tilde{v}(z)=\tilde{v}(\sigma(z))\left(1-\dfrac{\beta d(z)}{1+\beta h(z)}\right).\]

The second step follows by the same computations of \autoref{teorAC}, using the uniform energy estimates \eqref{H1eps} and \eqref{H2eps}. 
Hence, by \eqref{fespbound}, the family $\set{f_\eps}$ is bounded in $L^2$ so that  there exist a subsequence $\set{\eps_k}$ and a function $f\in L^2(\R^n)$  such that $f_{\eps_k}$ converges, weakly in $L^2(\R^n)$, to $f$. Moreover, by \eqref{fespconc} $f=0$ almost everywhere in $\R^n\setminus\Omega$. 

To conclude the proof as in step three, we notice that by \autoref{vepsfeps}, up to a subsequence, $\set{v_{\eps_k}}$ converges weakly in $H^1(\Omega)$ to $v_f$, so that by the previous steps, $\tilde{v}_{\eps_k}$ converges, weakly in $H^1(\Omega_1)$ to the function $\tilde{v}_f$ defined in \eqref{limfun1}.
\end{proof}

We notice that \autoref{teorACeps} can also be applied to sequence of eigenfunctions obtaining the following corollary.
\begin{cor}\label{ossnonconc}
Let $\set{u_\eps^j}$ be a family of normalised eigenfunctions of eigenvalue $\lambda_\eps^j$. Then, up to a subsequence, the functions $\tilde{u}_\eps^j=u_\eps^j\circ\Psi_\eps$ converge weakly in $H^1(\Omega_1)$ and strongly in $H^1(\Omega)$ to 
\[\tilde{v}^j(z)=\begin{cases} 
v^j(z) &\text{if }z\in\Omega,\\[5 pt]
    v^j(\sigma(z))\left(1-\dfrac{\beta d(z)}{1+\beta h(z)}\right) &\text{if }z\in\Sigma_1,
    \end{cases}\]
    where $v^j$ is a normalised eigenfunction of eigenvalue $\lambda^j$.
\end{cor}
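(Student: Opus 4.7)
The plan is to view $u_\eps^j$ as the solution $v_{\eps,h,f_\eps}$ of the source problem \eqref{eq forte} with right-hand side $f_\eps:=\lambda_\eps^j u_\eps^j$ and to invoke \autoref{teorACeps}. I first verify the two hypotheses of that theorem. Since $\lambda_\eps^j\to\lambda^j$ by \autoref{cor3.6}, the normalisation of $u_\eps^j$ gives $\int_{\Omega_\eps}f_\eps^2\,dx=(\lambda_\eps^j)^2\le C$, which is \eqref{fespbound}. For the concentration bound \eqref{fespconc}, I test the eigenvalue problem against $u_\eps^j$ itself, obtaining the energy identity
\[\int_{\Omega_\eps}\eps(d)|\nabla u_\eps^j|^2\,dx+\beta\int_{\partial\Omega_\eps}(u_\eps^j)^2\,d\Hn=\lambda_\eps^j,\]
which I plug into \eqref{eq:epspoinc2} to get $\int_{\Sigma_\eps}(u_\eps^j)^2\,dx\le\eps C$, and hence $\int_{\Sigma_\eps}f_\eps^2\,dx\le\eps C'$.

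Applying \autoref{teorACeps} now yields that, along a suitable vanishing sequence $\eps_k$, the functions $f_{\eps_k}$ converge weakly in $L^2(\R^n)$ to some $f$ vanishing outside $\Omega$, and $\tilde{u}_{\eps_k}^j$ converges weakly in $H^1(\Omega_1)$ to $\tilde{v}_f$. Passing to a further subsequence via \autoref{cor3.6}(ii), I may assume $u_{\eps_k}^j\to v^j$ strongly in $L^2(\Omega)$, where $v^j$ is a normalised eigenfunction of $\lambda^j$ for the limit problem \eqref{eq forte autoval limite}. Combined with $\lambda_{\eps_k}^j\to\lambda^j$, this identifies $f=\lambda^j v^j$; since $v^j$ itself solves \eqref{eq forte limite} with right-hand side $\lambda^j v^j$, uniqueness yields $v_f=v^j$, so $\tilde{v}_f$ is precisely the explicit function $\tilde{v}^j$ appearing in the statement.

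It remains to upgrade the convergence on $\Omega$ from weak to strong $H^1$. I would rely on the energy identities
\[\int_\Omega|\nabla u_\eps^j|^2\,dx=\lambda_\eps^j-\eps\int_{\Sigma_\eps}|\nabla u_\eps^j|^2\,dx-\beta\int_{\partial\Omega_\eps}(u_\eps^j)^2\,d\Hn,\]
\[\int_\Omega|\nabla v^j|^2\,dx=\lambda^j-\int_{\partial\Omega}\dfrac{\beta}{1+\beta h}(v^j)^2\,d\Hn.\]
Using the change of variables $\Psi_\eps$, compactness of the trace on $\partial\Omega_1$, weak-$L^2$ lower semicontinuity of the normal-derivative contribution in $\Sigma_1$, and the explicit linear profile of $\tilde{v}^j$ in $\Sigma_1$, I would show that
\[\liminf_{\eps\to 0}\left[\eps\int_{\Sigma_\eps}|\nabla u_\eps^j|^2\,dx+\beta\int_{\partial\Omega_\eps}(u_\eps^j)^2\,d\Hn\right]\ge\int_{\partial\Omega}\dfrac{\beta}{1+\beta h}(v^j)^2\,d\Hn,\]
which, combined with $\lambda_\eps^j\to\lambda^j$, gives $\limsup\int_\Omega|\nabla u_\eps^j|^2\le\int_\Omega|\nabla v^j|^2$. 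Together with weak lower semicontinuity this forces norm convergence and hence strong $H^1(\Omega)$ convergence. The main technical obstacle is precisely this liminf estimate: weak $H^1(\Omega_1)$ convergence of $\tilde{u}_\eps^j$ gives only lower bounds, and the two contributions must combine to exactly the right Robin weight $\beta/(1+\beta h)$, which seems to require exploiting the explicit linear dependence of $\tilde{v}^j$ on the normal coordinate.
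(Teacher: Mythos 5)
Your set-up and the first two-thirds of the argument coincide with the paper's: you write $u_\eps^j=v_{\eps,f_\eps}$ with $f_\eps=\lambda_\eps^j u_\eps^j$, verify \eqref{fespbound} from $\lambda_\eps^j\to\lambda^j$ and the normalisation, verify \eqref{fespconc} from the energy identity combined with \eqref{eq:epspoinc2}, apply \autoref{teorACeps} to get weak $H^1(\Omega_1)$ convergence, and identify $v_f=v^j$ via \autoref{cor3.6}(ii). That is exactly the paper's route.

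Where you part ways with the paper, and where your argument remains incomplete (as you yourself flag), is the strong $H^1(\Omega)$ convergence. The paper dispatches this step in one line: the energy estimates \eqref{H1eps} and \eqref{H2eps} give a uniform $H^2(\Omega)$ bound on $u_\eps^j$ (since $\eps(d)=1$ on $\Omega$, \eqref{H2eps} controls $\int_\Omega|D^2 u_\eps^j|^2$, and \eqref{H1eps} plus normalisation control the lower-order terms), and the compact embedding $H^2(\Omega)\hookrightarrow\hookrightarrow H^1(\Omega)$ then yields strong $H^1(\Omega)$ subsequential convergence with no further work. You overlooked that these $H^2$ estimates are already available (they are precisely what \autoref{teorACeps} requires and what \autoref{teor: C11energy} establishes), so this is the natural tool to reach for. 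Your alternative --- deriving the liminf estimate
\[
\liminf_{\eps\to 0}\Bigl[\eps\int_{\Sigma_\eps}|\nabla u_\eps^j|^2\,dx+\beta\int_{\partial\Omega_\eps}(u_\eps^j)^2\,d\Hn\Bigr]\ge\int_{\partial\Omega}\dfrac{\beta}{1+\beta h}(v^j)^2\,d\Hn
\]
from weak $H^1(\Omega_1)$ convergence of $\tilde{u}_\eps^j$ --- is in fact correct and provable: pushing $\Sigma_\eps$- and $\partial\Omega_\eps$-integrals to $\Sigma_1$ and $\partial\Omega_1$ via $\Psi_\eps$ (with the Jacobian control of \autoref{itformulas}), using trace compactness, lower semicontinuity of $\int_{\Sigma_1}|\nabla\cdot\,\nu_0|^2$, and the explicit profile $\tilde{v}^j=v^j(\sigma)\bigl(1-\tfrac{\beta d}{1+\beta h}\bigr)$, the two contributions combine to $\int_{\partial\Omega}\tfrac{\beta^2h+\beta}{(1+\beta h)^2}(v^j)^2=\int_{\partial\Omega}\tfrac{\beta}{1+\beta h}(v^j)^2$. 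So the route you sketch can be completed, and it is essentially a $\Gamma$-liminf recovery-sequence computation, but it is considerably heavier than invoking the $H^2$ bound you already have in hand. As written, though, your proposal leaves this key liminf unproved, which is a genuine gap.
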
\label{corimp}
\begin{proof}
Recall that, by \eqref{convergenza lambda},  $\lambda_\eps^j$ converges to $\lambda^j$. To prove the assertion, we want to apply  \autoref{teorACeps} to the sequence of eigenfunctions $\set{u_{\eps}^j}$ and notice that the limit in $\Omega$, $v_f$, is indeed a normalised eigenfunction of eigenvalue $\lambda^j$. Indeed, let $f_\eps=\lambda_{\eps}^ju_\eps^j$, then $v_{\eps,f_\eps}=u_\eps^j$. By the convergence of $\lambda_\eps^j$ to $\lambda^j$ and by \eqref{eq:epspoinc2}, we have that the family $\set{f_\eps}$ satisfies assumptions \eqref{fespbound} and \eqref{fespconc}, and we can apply the theorem.\medskip

By \autoref{convergenza spettri astratto}, we have that, up to a subsequence, $\set{u_\eps^j}$ converges, in $L^2$, to an eigenfunction  $v^j$ of associated eigenvalue $\lambda^j$. Moreover, notice that, by the energy estimates \eqref{H1eps} and \eqref{H2eps}, we have that the sequence $\set{u_\eps^j}$ is equibounded in $H^2(\Omega)$, hence, up to a further subsequence, the convergence is strong in $H^1(\Omega)$. Then, up to a subsequence, $\set{f_\eps}$ converges in $L^2$ to $f=\lambda^j v^j$, and $v_f=v^j$. Hence  $\tilde{u}_\eps^j$ converges weakly in $H^1(\Omega)$ to  
\[\tilde{v}_f(z)=\begin{cases} 
v_f(z) &\text{if }z\in\Omega,\\[5 pt]
    v_f(\sigma(z))\left(1-\dfrac{\beta d(z)}{1+\beta h(z)}\right) &\text{if }z\in\Sigma_1.
    \end{cases}\]
\end{proof}\medskip

Finally, to prove \eqref{5.1strong} we will need the following formulas for integrals of $\Sigma_\eps$ and $\partial\Om_\eps$.

\begin{oss}\label{itformulas}
Using the coarea formula with the distance function $d$, we have that for every $g\in L^1(\Omega_\eps)$ 
\[ \int_{\Sigma_\eps}g(x)\,dx=\int_0^{+\infty}\int_{\set{d=t}} g(\xi)\,\chi_{\Sigma_\eps}\!(\xi)\,d\Hn(\xi)\,dt.\]
Let 
\[
\phi_t\colon x\in\Gamma_{d_0}\mapsto x+t\nu_0(x)\in\Gamma_{td_0},
\]
then $\set{d=t}=\phi_t(\partial\Omega)$, and by the area formula on surfaces (see for instance \cite[Theorem 11.6]{M12}) 
\[\begin{split}\int_{\Sigma_\eps}g(x)\,dx&=\int_0^{+\infty}\int_{\partial\Omega} g(\sigma+t\nu_0)\chi_{\Sigma_\eps}(\sigma+t\nu_0) J^\tau\phi_t(\sigma)\,d\Hn(\sigma)\,dt
\end{split}\]
Similarly,
\[
\int_{\partial\Omega_\eps}g(\xi)\,d\Hn(\xi)=\int_{\partial\Omega} g(\sigma+\eps h\nu_0)J^\tau\phi_{\eps h}(\sigma)\sqrt{1+\eps^2\abs{\nabla h}^2}\,d\Hn(\sigma).
\]
Recall that for a perturbation of the identity $\Phi(x)=x+tX$, the tangential Jacobian can be written as
\[J^\tau \Phi=1+t\divv^\tau(X)+R(t,X),\]
where $R$ is a reminder term. 
We have
\begin{equation}\label{eq:intsigma} \int_{\Sigma_\eps}g(x)\,dx=\int_{\partial\Omega}\int_0^{\eps h(\sigma)} g(\sigma+t\nu_0)\left(1+tH(\sigma)+\eps^2 R_1(\sigma,t,\eps)\right)\,dt\,d\Hn\end{equation}
and 
\begin{equation}\label{eq:intdesigma} \int_{\partial\Omega_\eps}g(\sigma)\,d\Hn=\int_{\partial\Omega} g(\sigma+\eps h\nu_0)\left(1+\eps h(\sigma)H(\sigma)+\eps^2 R_2(\sigma,\eps)\right)\,d\Hn,\end{equation}
where the remainder terms $R_1$ and $R_2$ are bounded functions.
\end{oss}

We are finally ready to prove \eqref{5.1strong}.

\begin{lemma} \label{lemma 5.1strong}
    For every $u\in E(\lambda)$ we have that
    \[\lim_{\eps\to0^+}\dfrac{ \displaystyle\int_{\Om_\eps} a(d) \abs{\nabla (\mathcal{T}_{\eps,h}(\lambda\hat{u}_\eps)-\hat{u}_\eps)}^2\,dx+\beta\int_{\partial \Om_\eps} (\mathcal{T}_{\eps,h}(\lambda\hat{u}_\eps)-\hat{u}_\eps)^2\,d\Hn}{\eps}=0.\]
    That is, \eqref{5.1strong} holds.
\end{lemma}
\begin{proof}
Let $v_\eps=\mathcal{T}_{\eps,h}(\lambda\hat{u}_\eps)$ and let $G_\eps=G_{\eps,\lambda \hat{u}_\eps}$, that is
\[G_\eps(\varphi)=\int_{\Om_\eps} a(d) \abs{\nabla \varphi}^2\,dx+\beta\int_{\partial \Om_\eps} \varphi^2\,d\Hn-2\lambda\int_{\Om_\eps}\hat{u}_\eps\varphi.\]
By direct computation, we obtain 
\begin{equation}\label{norma per sigma eps}
\begin{split}
    \int_{\Om_\eps} a(d) \abs{\nabla (v_\eps-\hat{u}_\eps)}^2\,dx+\beta\int_{\partial \Om_\eps} (v_\eps-\hat{u}_\eps)^2\,d\Hn =&G_\eps(\hat{u}_\eps)+ G_\eps(v_\eps)+2\lambda \int_{\Om_\eps} \hat{u}_\eps^2dx+2\lambda \int_{\Om_\eps}v_\eps\hat{u}_\eps\,dx\\
    &- 2\left(\int_{\Om_\eps} a(d) \nabla v_\eps\nabla \hat{u}_\eps\,dx+\beta\int_{\partial \Om_\eps} v_\eps\hat{u}_\eps\,d\Hn\right).
\end{split}
\end{equation}
Since $v_\eps$ solves
\[\int_{\Om_\eps}a(d)\nabla \varphi\nabla v_\eps\,dx+\beta\int_{\partial\Om_\eps}\varphi v_\eps\,d\Hn=\lambda\int_{\Om_\eps}\hat{u}_\eps\varphi\,dx,\]
   we have that 
   \[\int_{\Om_\eps} a(d) \nabla v_\eps\nabla \hat{u}_\eps\,dx+\beta\int_{\partial \Om_\eps} v_\eps\hat{u}_\eps\,d\Hn=\lambda \int_{\Om_\eps} \hat{u}_\eps^2\,dx,\]
   and 
   \[G_\eps(v_\eps)=-\lambda \int_{\Om_\eps}v_\eps\hat{u}_\eps\,dx.\]
   Thus, by \eqref{norma per sigma eps}
   \[\int_{\Om_\eps} a(d) \abs{\nabla (v_\eps-\hat{u}_\eps)}^2\,dx+\beta\int_{\partial \Om_\eps} (v_\eps-\hat{u}_\eps)^2\,d\Hn=G_\eps(\hat{u}_\eps)-G_\eps(v_\eps).\]
   To prove \eqref{5.1strong}, we prove that 
   \[\lim_{\eps\to0^+}\dfrac{G_\eps(\hat{u}_\eps)-G_\eps(v_\eps)}{\eps}=0.\]\medskip
   
   Let 
   \[G_0(\varphi)=\int_\Om \abs{\nabla\varphi}^2\,dx+\beta\int_{\partial\Om}\dfrac{\varphi^2}{1+\beta h}\,d\Hn-2\lambda\int_\Om u\varphi\,dx,\]
   which admits $u$ as the unique minimiser. Then
   \[\dfrac{G_\eps(\hat{u}_\eps)-G_\eps(v_\eps)}{\eps}\le\dfrac{G_\eps(\hat{u}_\eps)-G_0(u)-(G_\eps(v_\eps)-G_0(v_\eps))}{\eps}.\]
   We start by estimating 
   \[G_\eps(\hat{u}_\eps)-G_0(u)=\eps\int_{\Sigma_\eps}\abs{\nabla \hat{u}_\eps}^2\,dx+\beta\int_{\partial\Om_\eps}\hat{u}^2_\eps\,d\Hn-\beta\int_{\partial\Om}\dfrac{u^2}{1+\beta h}\,d\Hn-2\lambda\int_{\Sigma_\eps}\hat{u}^2_\eps\,dx,\]
   from above. For every $x\in\Sigma_\eps$, $x=\sigma+t\nu_0(\sigma)$ for some $\sigma\in\partial\Omega$ and $t\in(0,h(\sigma))$, and, by direct computations 
   \[\abs{\nabla \hat{u}_\eps}^2(x)\le\dfrac{\beta^2 u^2(\sigma)}{\eps^2(1+\beta h)^2}+C\left(\abs{\nabla u(\sigma)}^2+u^2(\sigma)\right),\]
   where $C=C(h,\beta)$ and
   \[\hat{u}_\eps(x)=u(\sigma)\left(1-\dfrac{\beta t}{\eps(1+\beta h)}\right).\]
   using \eqref{eq:intsigma}, we have
   \begin{equation}\label{suphat1}\begin{split}\eps\int_{\Sigma_\eps}\abs{\nabla\hat{u}_\eps^2}\,dx\le&\beta^2\int_{\partial\Om} \dfrac{u^2}{(1+\beta h)^2}\int_0^{h(\sigma)}\left(1+\eps t H\right)\,dt\,d\Hn+\eps^2 C(\Omega,h,\beta)\int_{\partial\Omega}\left(\abs{\nabla u}^2+u^2\right)\,d\Hn\\[10pt]
   \le&\beta^2\int_{\partial\Om} \dfrac{u^2h}{(1+\beta h)^2}\left(1+\dfrac{\eps h H}{2}\right)\,d\Hn+\eps^2C(\Omega,h,\lambda,\beta), \end{split}\end{equation}
   and
   \begin{equation}\label{suphat2}\begin{split}\int_{\Sigma_\eps}\hat{u}_\eps^2\,dx\ge&\eps\int_{\partial\Omega}u^2\int_0^{h(\sigma)}\left(1-\dfrac{\beta t}{1+\beta h}\right)^2\,dt\,d\Hn-\eps^2C(\Omega,h,\beta)\int_{\partial\Om}u^2\,\Hn\\[10pt]
    \ge&\eps\int_{\partial\Omega}\dfrac{h(3+3\beta h+\beta^2 h^2)}{3(1+\beta h)^2}u^2\,d\Hn-\eps^2C(\Omega,h,\lambda,\beta).\end{split}\end{equation}\medskip

On the other hand, using \eqref{eq:intdesigma}, we have
\begin{equation}\label{suphat3}\beta\int_{\partial\Omega_\eps}\hat{u}_\eps^2\,d\Hn\le\beta\int_{\partial\Om}\dfrac{u^2}{(1+\beta h)^2}(1+\eps h H)\,d\Hn+\eps^2 C(\Omega,h,\lambda,\beta).\end{equation}
Putting together estimates \eqref{suphat1}, \eqref{suphat2} and \eqref{suphat3}, we get
\begin{equation}\label{suphatlast}G_\eps(\hat{u}_\eps)-G_0(u)\le\eps\left[\beta\int_{\partial\Om}\dfrac{h H(2+\beta h)}{2(1+\beta h)^2}u^2\,d\Hn-2\lambda\int_{\partial\Om}\dfrac{h(3+3\beta h+\beta^2 h^2)}{3(1+\beta h)^2}u^2\,d\Hn\right]+\eps^2 C(\Omega,h,\lambda,\beta) \end{equation}

   We now want to estimate from below 
   \[G_\eps(v_\eps)-G_0(v_\eps)=\eps \int_{\Sigma_\eps}|\nabla v_\eps|^2\,dx+\beta\int_{\partial \Om_\eps}v_\eps^2\,d\Hn-\beta \int_{\partial \Om}\dfrac{v_\eps^2}{1+\beta h}\,d\Hn-2\lambda \int_{\Sigma_\eps}\hat u_\eps v_\eps\,dx.\]

Using \eqref{eq:intsigma} and \eqref{eq:intdesigma}, and letting $\bar{R}>0$ such that $\abs{R_1},\abs{R_2}\le \bar{R}$, we have
\begin{equation}\label{stima 1 aeps}
   \eps \int_{\Sigma_{\eps}} |\nabla v_\eps|^2 \, dx \geq \eps\int_{\partial \Omega} \int_0^{{\eps} h(\sigma)} \abs{\nabla v_\eps(\sigma + t \nu_0)}^2 \left(1 + t H(\sigma) - {\eps}^2 \bar{R}\right) \, dt \, d\mathcal{H}^{n-1}
\end{equation}

and
\begin{equation}\label{stima 2 aeps}
    \beta \int_{\partial \Omega_{\eps}} v_\eps^2 \,d\Hn \geq \beta \int_{\partial \Omega} v_\eps^2(\sigma + {\eps} h(\sigma)\nu_0(\sigma)) \left(1 + {\eps} h(\sigma) H(\sigma) - {\eps}^2 \bar{R}\right) \, d\mathcal{H}^{n-1}.
\end{equation}

For ${\eps}$ sufficiently small,   for every $\sigma \in \partial \Omega$ and $0 < t < {\eps} h(\sigma)$, we have that $1 + t H(\sigma) > 0$. Using Hölder’s inequality, we obtain
\[\begin{split}
\int_0^{{\eps} h} |\nabla v_\eps(\sigma + t \nu_0)|^2 (1 + t H) \, dt &\geq \dfrac{1}{{\eps} h} \left( \int_0^{{\eps} h} |\nabla v_\eps(\sigma + t \nu_0)| \sqrt{1 + t H} \, dt \right)^2\\[15 pt]
&\geq \dfrac{1}{{\eps} h} \left( \int_0^{{\eps} h} \dfrac{d}{dt}(v_\eps(\sigma + t \nu_0)) \sqrt{1 + t H} \, dt \right)^2\\[15 pt]
&= \dfrac{1}{{\eps} h} \left( v_\eps(\sigma + {\eps} h \nu_0)\sqrt{1 + {\eps} h H} - \left(v_\eps(\sigma) + \int_0^{{\eps} h} \dfrac{H v_\eps(\sigma + t \nu_0)}{2 \sqrt{1 + t H}} \, dt \right) \right)^2.
\end{split}\]

By Young’s inequality, we deduce that for every $\alpha > 0$  
\begin{equation}\label{stima 3 aeps}
    \begin{split}
        \int_0^{{\eps} h(\sigma)} |\nabla v_\eps(\sigma + t \nu_0)|^2 (1 + t H) \, dt 
\geq& \dfrac{(1 - \alpha)(1 + {\eps} h H) v_\eps(\sigma + {\eps} h \nu_0)^2}{{\eps} h}\\[15 pt]
&+ \dfrac{1}{{\eps} h} \left(1 - \dfrac{1}{\alpha} \right) \left(v_\eps(\sigma) + \int_0^{{\eps} h(\sigma)} \dfrac{H v_\eps(\sigma + t \nu_0)}{2 \sqrt{1 + t H}} \, dt \right)^2.
    \end{split}
\end{equation}

Joining \eqref{stima 1 aeps}, \eqref{stima 2 aeps} and \eqref{stima 3 aeps}, we obtain 

\[\begin{split} G_\eps(v_\eps)-G_0(u)
\geq& \int_{\partial \Omega} \dfrac{1}{ h(\sigma)} (1 - \alpha + \beta h)(1 + {\eps} h H) v_\eps^2(\sigma + {\eps} h \nu_0) \, d\Hn\\[15 pt]
&+ \int_{\partial \Omega} \dfrac{1}{ h} \left( \left(1 - \dfrac{1}{\alpha} \right) \left(v_\eps(\sigma) + \int_0^{{\eps} h} \dfrac{H v_\eps(\sigma + t \nu_0)}{2 \sqrt{1 + t H}} \, dt \right)^2 
- \dfrac{\beta h v_\eps^2(\sigma)}{1 + \beta h} \right) d\mathcal{H}^{n-1} \\
&- \eps^2 \bar{R} \mathcal{R}({\eps}, v_\eps)-2\lambda \int_{\Sigma_\eps} v_\eps \hat{u}_\eps\,dx
\end{split}\]

where
\[
\mathcal{R}({\eps}, v_\eps) = {\eps} \int_{\partial \Omega} \int_0^{{\eps} h(\sigma)} |\nabla v_\eps(\sigma + t \nu_0)|^2 \, d\mathcal{H}^{n-1} \, dt 
+ \beta \int_{\partial \Omega} v_\eps(\sigma + {\eps} h\nu_0)^2 \, d\mathcal{H}^{n-1}\le C(\Omega,h,\lambda,\beta).\]
Then, choosing \(\alpha = \alpha(\sigma) = 1 + \beta h(\sigma)\),

\begin{equation}\label{inf2tolast}G(v_\eps)-G_0(v_\eps)\geq \int_{\partial \Omega} \dfrac{\beta H v_\eps(\sigma)}{ (1 + \beta h)} \int_0^{{\eps} h(\sigma)} \dfrac{v_\eps(\sigma + t \nu_0)}{\sqrt{1 + t H}} dt\, d\mathcal{H}^{n-1}-2\lambda \int_{\Sigma_\eps} v_\eps \hat{u}_\eps\,dx-\eps^2 C(\Omega,h,\lambda,\beta).
\end{equation}

Using \eqref{eq:intsigma}, we obtain that    
\begin{equation}\label{infsigma}\begin{split}
\int_{\Sigma_{\eps}} \hat{u}_\eps v_\eps\, dx =& \int_{\partial \Omega} \int_0^{{\eps} h(\sigma)} \hat{u}_\eps v_\eps(\sigma + t \nu) J(t,\sigma) \, dt  d\Hn\\[10pt]
=& \eps \int_{\partial \Omega} u(\sigma) \int_0^{h(\sigma)} \left(1-\dfrac{\beta s}{1+\beta h}\right)\tilde{v}_\eps(\sigma + s\nu) J(\eps s, \sigma) \, ds \, d\Hn.
\end{split}\end{equation}
If $\eps$ is sufficiently small we have that $\norma{(1+\eps s H)^{-1/2}-1}_\infty,\norma{J(\eps s, \sigma)-1}_\infty\le \eps C(\Om,h)$ so that, putting together \eqref{inf2tolast} and \eqref{infsigma}, we have

\begin{equation}\label{inflast}
\begin{split}G(v_\eps)-G_0(v_\eps)\geq& \eps\beta\int_{\partial \Omega} \dfrac{ H v_\eps(\sigma)}{ (1 + \beta h)} \int_0^{h(\sigma)} \tilde{v}_\eps(\sigma + s \nu_0)ds\, d\mathcal{H}^{n-1}\\[10pt]&-2\lambda\eps  \int_{\partial \Omega} u(\sigma) \int_0^{h(\sigma)} \left(1-\dfrac{\beta s}{1+\beta h}\right)\tilde{v}_\eps(\sigma + s\nu) \, ds \, d\Hn-\eps^2 C(\Omega,h,\lambda,\beta).
\end{split}
\end{equation}

Joining \eqref{suphatlast} and \eqref{inflast} we have
\[\begin{split}\dfrac{G_\eps(\hat{u}_\eps)-G_\eps(v_\eps)}{\eps}\le& \beta\int_{\partial\Omega}H\left( \dfrac{h (2+\beta h)}{2(1+\beta h)^2}u^2- \dfrac{ v_\eps}{ (1 + \beta h)} \int_0^{h} \tilde{v}_\eps(\sigma + s \nu_0)ds\right)\, d\mathcal{H}^{n-1}\\[10pt]
&-2\lambda\int_{\partial\Om}u\left(\dfrac{h(3+3\beta h+\beta^2h^2)}{3(1+\beta h)^2}u - \int_0^{h} \left(1-\dfrac{\beta s}{1+\beta h}\right)\tilde{v}_\eps(\sigma + s\nu) \, ds \right)\, d\Hn\\[10pt]
&+\eps C(\Omega,h,\lambda,\beta).
\end{split}\]

By \autoref{teorACeps} $\tilde{v}_\eps$ converges weakly $H^1(\Omega)$ to 
\[\tilde{u}(z)=\begin{cases}
    u(z) &\text{if }z\in\Omega,\\[5pt]
    u(\sigma(z))\left(1-\dfrac{\beta d(z)}{1+\beta h(z)}\right) &\text{if }z\in\Sigma_1.
\end{cases}\]
In particular,  we have the following convergences in $L^2(\partial\Omega)$
\[\begin{split}v_\eps&\longrightarrow u,\\[10pt]   
\int_0^{h(\sigma)}\tilde{v}_\eps(\sigma+s\nu_0)\,ds&\longrightarrow \int_0^{h(\sigma)}\tilde{u}(\sigma+s\nu_0)\,ds,\\[10pt]
\int_0^{h(\sigma)}\left(1-\dfrac{\beta s}{1+\beta h}\right)\tilde{v}_\eps(\sigma+s\nu_0)\,ds&\longrightarrow \int_0^{h(\sigma)}\left(1-\dfrac{\beta s}{1+\beta h}\right)\tilde{u}(\sigma+s\nu_0)\,ds.
\end{split}\]
Then
\[\begin{split}\lim_{\eps\to0^+}\int_{\partial \Omega} \dfrac{ H v_\eps(\sigma)}{ (1 + \beta h)} \int_0^{h(\sigma)} \tilde{v}_\eps(\sigma + s \nu_0)ds\, d\mathcal{H}^{n-1}=&\int_{\partial \Omega} \dfrac{ H u(\sigma)}{ (1 + \beta h)} \int_0^{h(\sigma)} \tilde{u}(\sigma + s \nu_0)ds\, d\mathcal{H}^{n-1}\\[10pt]&=\int_{\partial\Om}\dfrac{h H(2+\beta h)}{2(1+\beta h)^2}u^2\,d\Hn,\end{split}\]
\[\begin{split}\lim_{\eps\to0^+}\int_{\partial \Omega} u(\sigma) \int_0^{h(\sigma)} \left(1-\dfrac{\beta s}{1+\beta h}\right)\tilde{v}_\eps(\sigma + s\nu) \, ds \, d\Hn=&\int_{\partial \Omega} u(\sigma) \int_0^{h(\sigma)} \left(1-\dfrac{\beta s}{1+\beta h}\right)\tilde{u}(\sigma + s\nu) \, ds \, d\Hn\\[10pt]&=\int_{\partial\Om}\dfrac{h(3+3\beta h+\beta^2 h^2)}{3(1+\beta h)^2}u^2\,d\Hn\end{split}\]
and
\[\lim_{\eps\to0^+}\dfrac{G_\eps(\hat{u}_\eps)-G_\eps(v_\eps)}{\eps}=0,\]
which implies \eqref{5.1strong} and the assertion.

\end{proof}

\begin{oss}
Let $v_\eps=\mathcal{T}_{\eps,h}(\lambda\hat{u}_\eps)$. It actually holds true that 
\[\norma{v_\eps-\hat{u}_\eps}_{L^2(\Om_\eps)}\le \eps C. \]
Indeed, on $\Sigma_\eps$, by \eqref{eq:epspoinc2}, we have
   \[\norma {v_\eps-\hat{u}_\eps}_{L^2(\Sigma_\eps)}^2\leq \eps C_p\left[ \int_{\Om_\eps} a(d) \abs{\nabla (v_\eps-\hat{u}_\eps)}^2\,dx+\beta\int_{\partial \Om_\eps} (v_\eps-\hat{u}_\eps)^2d\Hn\right],\]
   hence by \eqref{5.1strong}, in the proof of the previous lemma, we have 
   \[\norma {v_\eps-\hat{u}_\eps}_{L^2(\Sigma_\eps)}^2\leq \eps^2 C.\]
To prove the estimate in $\Omega$, we use the fact that the function $w_\eps=v_\eps-\hat{u}_\eps$ is a solution to 
 \[\int_\Om \nabla w_\eps\nabla\varphi\,dx+\beta\int_{\partial\Om}\dfrac{ w_\eps\varphi}{1+\beta h}\,d\Hn=\int_{\partial\Om}g_\eps\varphi\,d\Hn,\]
for every $\varphi\in H^1(\Omega)$, where the function  \[g_\eps=\dfrac{\partial v_\eps^- }{\partial \nu_0}+\dfrac{\beta v_\eps}{1+\beta h},\]
is well defined and in $L^2(\partial\Om)$ 
as $v_\eps\in H^1(\Om_\eps)\cap H^2(\Omega\cup\Sigma_\eps)$. We recall that $v_\eps^-$ and $v_\eps^+$ denote the traces of $v_\eps$ on $\partial\Omega$ from $\Omega$ and from $\Sigma_\eps$ respectively. By H\"older's inequality, there exists a constant $C=C(\Om,h)$ such that 
    \[\int_\Om \abs{\nabla w_\eps}^2\,dx+\beta\int_{\partial\Om}\dfrac{ w_\eps^2}{1+\beta h}\,d\Hn\le C\int_{\partial\Om}g_\eps^2\,d\Hn,\]
    and, in particular, by the Poincarè-type inequality \eqref{poinc unif Hm}
    \begin{equation}\label{bestgeps}\int_\Om w_\eps^2\,dx\le C\int_{\partial\Om}g_\eps^2\,d\Hn.\end{equation}
    In order to estimate the $L^2$ norm of $g_\eps$ on $\partial\Om$ we argue as in \cite[Theorem 2.1]{BCF80}. On $\partial\Om,$ $v_\eps$ must satisfy the transmission condition
    \[\dfrac{\partial v_\eps^- }{\partial \nu_0}=\eps\dfrac{\partial v_\eps^+ }{\partial \nu_0}.\]
    Hence, for $\Hn$-almost every $\sigma\in \partial\Om$ 
    \[g_\eps(\sigma)=\eps\dfrac{\partial v_\eps^+ (\sigma)}{\partial \nu_0}+\dfrac{\beta v_\eps(\sigma)}{1+\beta h}.\]
    For $\Hn$-almost every $\sigma\in \partial\Om$ and $t\in(0,\eps h(\sigma))$ we have
     \[\dfrac{\partial v_\eps (\sigma+t\nu_0)}{\partial \nu_0}=-\int_t^{\eps h(\sigma)}\dfrac{\partial^2 v_\eps (\sigma+s\nu_0)}{\partial \nu_0^2} \,ds+\dfrac{\partial v_\eps (\sigma+\eps h\nu_0)}{\partial \nu_0}\]
     and
    \[\begin{split}v_\eps(\sigma)&=-\int_0^{\eps h(\sigma)}\dfrac{\partial v_\eps (\sigma+t\nu_0)}{\partial \nu_0} \,dt+v_\eps(\sigma+\eps h\nu_0)\\[10pt]
    &=\int_0^{\eps h(\sigma)}\int_t^{\eps h(\sigma)}\dfrac{\partial^2 v_\eps (\sigma+s\nu_0)}{\partial \nu_0^2} \,ds \,dt-\eps h \dfrac{\partial v_\eps (\sigma+\eps h\nu_0)}{\partial \nu_0}+v_\eps(\sigma+\eps h\nu_0).
    \end{split}\]
    Then, using the Robin boundary condition on $\partial\Om_\eps$
    \begin{equation}\label{Rbcomeps}\eps \dfrac{\partial v_\eps (\sigma+\eps h\nu_0)}{\partial \nu_\eps}+\beta v_\eps(\sigma+\eps h\nu_0)=0,\end{equation}
    we have
    \[\begin{split}g_\eps(\sigma)=& -\eps \int_0^{\eps h(\sigma)}\dfrac{\partial^2 v_\eps (\sigma+s\nu_0)}{\partial \nu_0^2} \,ds+\dfrac{\beta}{1+\beta h}\int_0^{\eps h(\sigma)}\int_t^{\eps h(\sigma)}\dfrac{\partial^2 v_\eps (\sigma+s\nu_0)}{\partial \nu_0^2} \,ds \,dt\\[10pt]
    &+\dfrac{\eps}{1+\beta h}\nabla v_\eps(\sigma+\eps h\nu_0)\cdot(\nu_0(\sigma)-\nu_\eps(\sigma+\eps h\nu_0)).
    \end{split}\]
so that 
\[g_\eps^2(\sigma)\le \eps^2C(h,\beta)\left[\left( \int_0^{\eps h}\dfrac{\partial^2 v_\eps (\sigma+s\nu_0)}{\partial \nu_0^2} \,ds\right)^2+v_\eps(\sigma+\eps h\nu_0)^2+\eps^2 \abs*{\nabla^{\partial\Omega_\eps}v_\eps(\sigma+\eps h\nu_0)}^2\right],\]
where we used the fact that $\abs{\nu_0-\nu_\eps}\le C(\norma{\nabla h}_\infty)\eps$ and decomposed the gradient of $v_\eps$ in its normal and tangent component to $\partial\Omega_\eps$ and used the Boundary condition \eqref{Rbcomeps} to estimate the normal component. Finally, integrating on $\partial\Omega$ and using \eqref{eq:intsigma} and \eqref{eq:intdesigma}, we have 
\[\int_{\partial\Om}g_\eps^2\,d\Hn\le \eps^2 C\left[\eps\int_{\Sigma_\eps}\abs{D^2 v_\eps}^2\,dx+\eps^2\int_{\partial\Om_\eps}\abs{\nabla^{\partial\Om_\eps} v_\eps}^2\,d\Hn+\int_{\partial\Om_\eps}v_\eps^2\,d\Hn\right],\]
where $C=C(\Omega,h,\beta)$ and the expression in the square brackets, using the energy estimates proved in \autoref{teor: C11energy}, is bounded by a constant depending on $\Omega, h, \beta$ and $\lambda$. Finally, by \eqref{bestgeps}, we have
\[\norma{v_\eps-\hat{u}_\eps}_{L^2(\Om)}\le \eps C,\]
hence the assertion.

\end{oss}

\begin{prop}\label{xitozeta}
    For every $u_1,u_2\in E(\lambda)$ we have
    \[q_\lambda(\mathcal{T}(\lambda\hat{u}_{1,\eps}),\mathcal{T}(\lambda\hat{u}_{2,\eps}))=\eps Q_\lambda(u_1,u_2)+o(\eps).\]
    In particular, by the continuity of matrices' eigenvalues, we have that the eigenvalues of $\eps^{-1}q_\lambda$ restricted to $F$ converge to the ones of $Q_\lambda$ restricted to $E(\lambda)$.
\end{prop}
\begin{proof}
    Let $v_{i,\eps}=\mathcal{T}(\lambda\hat{u}_{i,\eps})$ and $w_{i\eps}=v_{i,\eps}-\hat{u}_{i,\eps}$ for $i=1,2$. We start by noticing that 
    \begin{equation}\label{appross qlamba 1}q_\lambda(v_{1,\eps},v_{2,\eps})=q_\lambda(\hat{u}_{1,\eps},\hat{u}_{2,\eps})+o(\eps),
    \end{equation}
    indeed, by direct computations, we have
    \[q_\lambda(v_{1,\eps},v_{2,\eps})-q_\lambda(\hat{u}_{1,\eps},\hat{u}_{2,\eps})=q_\lambda(v_{1,\eps},w_{2,\eps})+q_\lambda(v_{2,\eps},w_{1,\eps})-q_\lambda(w_{1,\eps},w_{2,\eps}).\]
    Since $q_\lambda$ is continuos in $H^1(\Om_\eps)$ (uniformly in $\eps$) with respect to the weighted norm
    \[\left(\int_{\Om_\eps}a(d)\abs{\nabla w}^2\,dx+\beta\int_{\partial\Om_\eps} w^2\,d\Hn\right)^{1/2}\]
    using \eqref{5.1strong}, we have that 
    \[q_\lambda(w_{1,\eps},w_{2,\eps})=o(\eps),\]
    while, for $i,j=1,2$, by \eqref{stima q_lambda v w}
    \[q(v_{i,\eps},w_{j,\eps})=-\lambda \int_{\Om_\eps} w_{i,\eps}w_{j,\eps}\,dx=o(\eps).\]

    We now need to estimate $q_\lambda(\hat{u}_{1,\eps},\hat{u}_{2,\eps})$. Using the fact that $u_1,u_2\in E(\lambda)$ we have 
    \[q_\lambda(\hat{u}_{1,\eps},\hat{u}_{2,\eps})=\eps\int_{\Sigma_\eps}\nabla \hat{u}_{1,\eps}\nabla\hat{u}_{2,\eps}\,dx+\beta\int_{\partial\Om}\hat{u}_{1,\eps}\hat{u}_{2,\eps}\,d\Hn-\beta\int_{\partial\Om} \dfrac{u_1 u_2}{1+\beta h}\,d\Hn-\lambda\int_{\Sigma_\eps}\hat{u}_{1,\eps}\hat{u}_{2,\eps}\,dx.\]
For every $x\in\Sigma_\eps$, $x=\sigma+t\nu_0(\sigma)$ for some $\sigma\in\partial\Omega$ and $t\in(0,h(\sigma))$, and, by direct computations 
\[
\nabla \hat{u}_{1,\eps}(x)\cdot \nabla\hat{u}_{2,\eps}(x) = \dfrac{\beta^2 u_1(\sigma)u_2(\sigma)}{{\eps}^2 (1 + \beta h)^2} + M_\eps(x),
\]

where 
\[\abs{M_\eps(x)}\le C(h,\beta)(\abs{\nabla u_1(\sigma)}^2+\abs{\nabla u_2(\sigma)}^2+u_1(\sigma)^2+u_2(\sigma)^2.\]Then arguing as in \eqref{suphat1}, \eqref{suphat2} and \eqref{suphat3}, we have that
\begin{equation}\label{suphat1bis}
\eps\int_{\Sigma_\eps}\nabla\hat{u}_{1,\eps}\nabla\hat{u}_{2,\eps}\,dx=\beta^2\int_{\partial\Om} \dfrac{u_1u_2h}{(1+\beta h)^2}\left(1+\dfrac{\eps h H}{2}\right)\,d\Hn+\eps^2\mathcal{R}_1(\eps,u_2,u_2),\end{equation}
\begin{equation}\label{suphat2bis}
\int_{\Sigma_\eps}\hat{u}_{1,\eps}\hat{u}_{2,\eps}\,dx=\eps\int_{\partial\Omega}\dfrac{h(3+3\beta h+\beta^2 h^2)}{3(1+\beta h)^2}u_1u_2\,d\Hn+\eps^2\mathcal{R}_2(\eps, u_1,u_2),\end{equation}
\begin{equation}\label{suphat3bis}
\beta\int_{\partial\Omega_\eps}\hat{u}_{1,\eps}\hat{u}_{2,\eps}\,d\Hn=\beta\int_{\partial\Om}\dfrac{u_1u_2}{(1+\beta h)^2}(1+\eps h H)\,d\Hn+\eps^2 \mathcal{R}_3(\eps,u_1,u_2).\end{equation}
where $\abs{\mathcal{R}_1},\abs{\mathcal{R}_2},\abs{\mathcal{R}_3}\le C(\Omega,h,\lambda,\beta)$. Then, putting together  \eqref{suphat1bis}, \eqref{suphat2bis} and \eqref{suphat3bis} we get 
\[q_\lambda(\hat{u}_{1,\eps},\hat{u}_{2,\eps})=\eps Q_\lambda(u_1,u_2)+O(\eps^2),\]
which, together with \eqref{appross qlamba 1} gives the assertion.
\end{proof}

 We are now ready to prove \autoref{asymptotic}.
 \begin{proof}[Proof of \autoref{asymptotic}]

By \eqref{5.1strong} and the Lemma on small eigenvalues (\autoref{LSE}) we have that for every $j=0,\dots,l$
\[\abs*{\dfrac{\lambda_\eps^{i+j}(h)-\lambda^{i+j}(h)}{\eps}-\dfrac{\xi_\eps^j}{\eps}}\le \dfrac{C \delta^2}{\eps}=o(1). \]
Where we recall that $\set{\xi_\eps^j}$ denotes the eigenvalues of the restriction of $q_\lambda$ to $F$. Then, by \autoref{xitozeta} we have
\[\lim_{\eps\to0^+}\dfrac{\xi_\eps^j}{\eps}=\zeta_\lambda^{j}(h),\]
hence
\[\lim_{\eps\to0^+}\dfrac{\lambda_\eps^{i+j}(h-\lambda^{i+j}(h)}{\eps})=\lim_{\eps\to0+}\dfrac{\xi_\eps^j}{\eps}=\zeta_\lambda^{j}(h).\] 
 \end{proof}

\section{Remarks on the Dirichlet case}\label{Dirichlet}
In this section we remark on how (and when) the presented result generalise to the case in which we replace the Robin boundary condition, in the original problem, with a Dirichlet one.\medskip

Fix $h$ a strictly positive Lipschitz function on $\partial\Omega$. For every $\eps>0$ we consider the following eigenvalue problem with Dirichlet boundary condition.
\begin{equation}\label{eqD forte autofunz}
\begin{cases}
-\Delta u = \lambda u & \text{in } \Omega, \\[5 pt]
-\eps\Delta u = \lambda u  & \text{in } \Sigma_\eps, \\[5 pt]
u^-=u^+ & \text{on } \partial \Om, \\[5 pt]
\displaystyle\dfrac{\partial {{u}}^-}{\partial \nu_0} = \varepsilon \dfrac{\partial {u}^+}{\partial \nu_0} & \text{on } \partial \Omega,\\[10 pt]

u = 0 & \text{on } \partial \Omega_\eps, 
\end{cases}
\end{equation}
which admits a discrete spectrum 
\[0<\lambda^1_{ \eps}(h)\le\lambda^2_{ \eps}(h)\le\dots\le\lambda^j_{ \eps}(h)\le\dots\to+\infty.\]
In such case the limit eigenvalue problem is the following Robin boundary value problem on $\Omega$
\begin{equation}\label{eqD forte autoval limite}
\begin{cases}
-\Delta u = \lambda u & \text{in } \Omega, \\[5 pt]
\displaystyle\dfrac{\partial u}{\partial \nu_0} + \dfrac{1}{h} u = 0 & \text{on } \partial \Omega,
\end{cases}
\end{equation}
which admits a discrete spectrum 
\[0<\lambda^1(h)\le\lambda^2(h)\le\dots\le\lambda^j(h)\le\dots\to+\infty.\]
We remark that the boundary condition in \eqref{eqD forte autoval limite} can be formally obtained by the one in \eqref{eq forte autoval limite} setting $\beta=+\infty$. We show that, in light of the classical results by \cite{BCF80} and \cite{AB86}, \autoref{cor3.6} and \autoref{asymptotic} still holds true formally setting $\beta=+\infty$ in the formulas. However, we are not able to fully recover \autoref{optlambda}. 

\subsection*{Convergence of the spectrum}

As in \autoref{spectr}, all $\eps>0$ and $f\in L^2(\Om_\eps)$ we consider $v_{\eps,h, f}=\mathcal{T}_{\eps,h} f\in H^1_0(\Om_\eps)$ the unique weak solution of
\[
\begin{cases}
-\Delta v_{\varepsilon,h,f} = f & \text{in } \Omega, \\[5 pt]
-\eps\Delta v_{\varepsilon,h,f} = f & \text{in } \Sigma_\eps, \\[5pt]
v_{\eps, h,f}^-=v_{\eps,h,f}^+&\text{on } \partial \Om, \\[5 pt]
\displaystyle\dfrac{\partial {v_{\eps, h,f}^-}}{\partial \nu_0} = \varepsilon \dfrac{\partial v_{\eps, h,f}^+}{\partial \nu_0} & \text{on } \partial \Omega,\\[10 pt]
 v_{\varepsilon,h, f} = 0 & \text{on } \partial \Omega_\varepsilon. 
\end{cases}
\]
which again, is the unique solution of
\begin{equation}\label{eqD forte}\int_{\Omega} \nabla v_{\eps,h,f}\nabla \varphi\,dx+\eps\int_{\Sigma_\eps} \nabla v_{\eps,h,f}\nabla \varphi\,dx=\int_{\partial\Omega_\eps}f\varphi\,dx,\end{equation}
for every $\varphi\in H^1_0(\Omega_\eps)$.
Moreover, $v_{\eps,h,f}$ is also the unique minimiser in $H^1_0(\Om_\eps)$ of the functional
\begin{equation}\label{funzionaleD Gepsf}
    G_{\eps,f}(w):=\int_{\Om_\eps}a(d)\abs{\nabla w}^2 \,dx- 2\int_{\Om_\eps} w f\,dx.
\end{equation}

For all $f\in L^2(\Om)$, consider $v_{h,f}=\mathcal{T}_hf\in H^1(\Om)$ the unique weak solution of the boundary value problem 
\[
\begin{cases}
-\Delta v_{h,f} = f & \text{in } \Omega, \\[5 pt]
\displaystyle\dfrac{\partial v_{h,f}}{\partial \nu_0} + \dfrac{1}{ h} v_{h,f} = 0 & \text{on } \partial \Omega.
\end{cases}
\]
That is the unique solution of
\begin{equation}\label{eqD forte limite}\int_\Omega\nabla v_{h,f}\nabla\varphi\,dx+\int_{\partial\Omega}\dfrac{v_{h,f}\varphi}{ h}\,d\Hn=\int_\Om f\varphi\,dx,\end{equation}
for every $\varphi\in H^1(\Omega)$.
$v_{h,f}$ is also the unique minimiser in $H^1(\Om)$ of the functional
\begin{equation}\label{GD 0 f}
G_{f}(w):=\int_{\Om}|\nabla w|^2 \,dx+\int_{\partial\Om} \dfrac{w^2}{h} \,d\Hn- 2\int_{\Om} w f\,dx.
\end{equation}

Again, for $\eps$, the operator $ \mathcal{T}_{\eps,h} : L^2(\Omega_\varepsilon) \to L^2(\Omega_\varepsilon) $, is positive, linear, continuous, compact, and self-adjoint. Similarly, as $h^{-1}$ is bounded and strictly positive, $ \mathcal{T}_h : L^2(\Omega) \to L^2(\Omega) $ satisfies the same properties. In order to apply \autoref{convergenza spettri astratto} we need to show that $\mathcal{T}_{\eps,h}$ and $\mathcal{T}_h$ satisfy assumptions \Href{H2}--\Href{H4} (assumption \Href{H1} is trivially satisfied setting $\mathcal{V}=L^2(\Om)$ and $R_\eps$ the extension to $0$ outside $\Om$). As in \autoref{ipotesi conv astratta}, is enough to have an uniform Poincaré-type inequality and the $\Gamma$-convergence of the functional $G_{\eps,f}$ to $G_{f}$ in $L^2(\R^n)$. The latter has been famously proved in \cite{AB86}, while the former can be find in the proof of Lemma 1.1 of the seminar paper \cite{BCF80}, and are namely equations (1.14) and (1.13), which we state below. Notice that the aforementioned inequalities are stated for solutions to \eqref{eqD forte} but are true for any function in $H^1_0(\Om_\eps)$.
\begin{lemma}\label{lemma:Dpoinc}
Let $h\colon\partial\Omega\to\R$ be a positive  Lipschitz function . Then there exist positive constants $d_0=d_0(\Omega)$, and $C_p=C_p(\Omega,\norma{h}_{C^{0,1}})$ such that if
\begin{equation}
    \eps\norma{h}_{\infty}\le d_0,
\end{equation}
then for every $w\in H^1_0(\Omega_\eps)$
\begin{equation}\label{eq:Depspoinc}\int_{\Omega_\eps} w^2\,dx\le C_p\int_{\Omega_\eps} a(d) \abs{\nabla w}^2\,dx,\end{equation}
and
\begin{equation}\label{eq:Depspoinc2}\int_{\Sigma_\eps} w^2\,dx\le \eps^2 C_p\int_{\Sigma_\eps} \abs{\nabla w}^2\,dx.\end{equation}
\end{lemma}
Thus, \autoref{cor3.6} also holds true in this case. 

\subsection*{Optimisation problems}

We start by noticing that for a general, non-negative, function $h\in L^1(\partial\Om)$, the Poisson problem
\begin{equation}\label{eqDP}
    \int_\Om \nabla v\nabla\varphi\,dx+\int_\Om\dfrac{v\varphi}{h}\,d\Hn=\int_\Om f\varphi,
\end{equation}
with $f\in L^2(\Om)$, it may not be well defined in $H^1(\Omega)$ unless we require additional assumptions on $h$, such as $h^{-1}\in L^p(\partial\Om)$ with $p>n-1$ (see, for instance, \cite[\S3.6]{LU68}).\medskip

A simple solution to such a problem is to restrict our attention to the functions that satisfy a uniform lower bound. Fix $c_1>0$, then we consider
\[\mathcal{H}_m(\partial\Om, c_1):=\Set{h\in L^1(\Om): h\geq c_1,\,\text{ and }\,\int_{\partial\Om}(h-c_1)\,d\Hn\leq m}.\]
As in \autoref{opt}, we say that a sequence  $\set{h_k}$ converges in $\mathcal{H}_m(\partial\Om, c_1)$ to a function $h$ if and only if $h_k^{-1}$ converges to $h^{-1}$ weakly-* in $L^{\infty}(\partial\Om)$. With this topology $\mathcal{H}_m(\partial \Om,c_1)$ is than compact. The following Poincaré-type inequality holds in $\mathcal{H}_m$, which is a consequence of the optimisation of the principal eigenvalue, $\lambda^1(h)$, of \eqref{eqD forte autoval limite} proved in \cite{BBN17}. 
\begin{lemma}
    Fix $m>0$. There exists a constant $C=C(\Om,m)$ such that for all $h\in\mathcal{H}_m$ and all $w\in H^1(\Om)$ 
    \[\int_\Om w^2\,dx\le C \left[\int_\Om \abs{\nabla w}^2\,dx+\int_{\partial\Om}\dfrac{w^2}{h}\,d\Hn\right].\]
\end{lemma}
Then, as in \autoref{opt} the solutions to the Poisson problem \eqref{eqDP} as well as the eigenvalues of \eqref{eqD forte autoval limite} are continuous in $\mathcal{H}_m(\partial\Om,c_1)$, in particular we have that the optimisation problem
\[\min_{h\in\mathcal{H}_m(\partial\Om,c_1)}F(\lambda^{j_1}(h),\lambda^{j_2}(h),\dots,\lambda^{j_r}(h))\]
admits a solution for every lower semicontinuous function $F\colon\R^r\to\R$ and indices $j_1,j_2,\dots,j_r\in\N$.

\subsection*{Asymptotic development}
Fix $h\in C^{1,1}(\Gamma_{d_0})$. As in \autoref{dev}, let $\lambda$ be an eigenvalue of multiplicity $l+1$ for the limit eigenvalue problem \eqref{eqD forte autoval limite}, that is
\[\lambda=\lambda^i(h)=\dots=\lambda^{i+l}(h)\]
for some $i\in\N$, and denote by $E(\lambda)$ the associated eigenspace in $H^1(\Omega)$. We want to apply the lemma on small eigenvalues to the quadratic form on $H^1_0(\Om_\eps)$
\[q_{\lambda}(v,w)=\int_{\Om_\eps}a(d)\nabla v\nabla w\,dx-\lambda\int_{\Om_\eps}vw\,dx,\]
to compute
\[\lim_{\eps\to0^+}\dfrac{\lambda_\eps^{i+j}(h)-\lambda^{i+j}(h)}{\eps},\]
for $j=0,\dots,l$. For every $j\in\N$ the $j$-th eigenvalues of $q_\lambda$  is exactly $\eta^j_\eps=\lambda^j_\eps(h)-\lambda^j(h)$, hence $\eta^j_\eps$ converges to zero, as $\eps$ goes to zero, if and only if $j=i,\dots,i+l$, so that assumption \Href{C1}, in the Lemma on small eigenvalue, is verified for $\eps$ sufficiently small.\medskip

In analogy to the Robin case, for every $\eps>0$, and $u\in E(\lambda)$ we define
\[\hat{u}_\eps(x):=\begin{cases}
    u(x) &\text{if }x\in \bar{\Omega},\\[5pt]
    u(\sigma(x))\left(1-\dfrac{d(x)}{\eps h(x)}\right) &\text{if } x\in\Sigma_\eps,\\[5pt]
    0 &\text{if }x \in \R^n\setminus\Om_\eps,
\end{cases}\]
and consider the $l+1$ dimensional subspace of $H^1_0(\Om_\eps)$ defined as
\[F:=\Set{\mathcal{T}_{\eps,h}\hat{u}_\eps \colon u\in E(\lambda)}.\]

For every $v\in F$ there exists $u\in E(\lambda)$ such that $v=\mathcal{T}_{\eps,h}(\lambda\hat{u}_\eps)$ and
\[\lim_{\eps\to0^+}\norma{\mathcal{T}_{\eps,h}(\lambda\hat{u}_\eps) - \hat{u}_\eps}_{L^2(\Omega_\eps)}=0.\]
To prove \autoref{asymptotic} we will need the following estimate. For every $u\in E(\lambda)$ with $\norma{u}_{L^2(\Om_\eps)}=1$
 \begin{equation}\label{5.1strongD}
      \int_{\Om_\eps} a(d) \abs{\nabla (\mathcal{T}_{\eps,h}(\lambda\hat{u}_\eps)-\hat{u}_\eps)}^2\,dx=o(\eps).
   \end{equation}

As for the Robin case, such an estimate in particular, implies the existence of $\delta=\delta(\eps)$, with $\delta^2=o(\eps)$, such that for every $u\in E(\lambda)$,
\begin{equation}\label{selD1}
    \norma{\mathcal{T}_{\eps,h}(\lambda\hat{u}_\eps) - \hat{u}_\eps}_{L^2(\Omega_\eps)}\le \norma{u}_{L^2(\Om)}\delta. 
\end{equation}
By \eqref{selD1}, for every $w\in H^1_0(\Om_\eps)$ and $v\in F$, we have 
\[\dfrac{q_\lambda(v,w)}{\norma{v}_{L^2(\Om_\eps)} \norma{w}_{L^2(\Om_\eps)} }\le \lambda  \dfrac{\norma{u}_{L^2(\Om)} }{\norma{\mathcal{T}_{\eps,h}(\lambda\hat{u}_\eps)}_{L^2(\Om_\eps)} }\delta \le C\delta,\]
for some $u\in E(\lambda)$. Hence,  for $\eps$ sufficiently small, \eqref{5.1strongD} implies the validity of assumption \Href{C2}. Finally, and using the lemma on small eigenvalues, we have that for every $j=0,\dots,l$
\[\lim_{\eps\to0^+}\dfrac{\lambda_\eps^{i+j}(h)}{\eps}=\lim_{\eps\to0^+}\dfrac{\xi_\eps^j}{\eps}\]
so that we are left to prove that
\[\lim_{\eps\to0^+}\dfrac{\xi_\eps^j}{\eps}=\zeta_\lambda^{j}(h),\]
 the $(j+1)$-th eigenvalue of the bilinear form 
\begin{equation}\label{eqDzeta}Q_\lambda(u,v) =
\dfrac{1}{2}\int_{\partial \Om}Huv\,d\Hn -\dfrac{1}{3}\lambda\int_{\partial\Om}huv\,d\Hn,\end{equation}
on the $(l+1)$-dimensional space $E(\lambda)$.\medskip

Estimate \eqref{5.1strongD} and equation \eqref{eqDzeta} follows from the same computations as in \autoref{lemma 5.1strong} and \autoref{xitozeta} provided that we prove the following proposition.

\begin{prop}\label{teorACepsD}
Let $h\in C^{1,1}(\Gamma_{d_0})$ be a positive function such that $h(x)=h(\sigma(x))$. Let $\mathcal{G}=\set{f_\eps}\subset L^2(\Omega_\eps)$ and assume that there exist a constant $C_\mathcal{G}>0$ such that for every $\eps>0$
\begin{equation*}
 \int_{\Omega_\eps} f_\eps^2\,dx\le C_\mathcal{G},
\end{equation*}
and
\begin{equation*}
   \int_{\Sigma_\eps}f_\eps^2\,dx\le \eps C_\mathcal{G}.
\end{equation*}
Let $v_\eps=v_{\eps,f_\eps}$ and 
    \[
    \tilde{v}_\eps(z)=\begin{cases}
        v_\eps(z) &\text{if }z\in\Omega,\\[5 pt]
        v_\eps(\Psi_\eps(z)) &\text{if }z\in\Sigma_1.
    \end{cases}
    \] 
 Then the family $\set{\tilde{v}_\eps}$ is equibounded in $H^1(\Omega_1)$. Moreover, up to a subsequence, $\set {f_\eps}$ converges weakly in $L^2$ to a function $f$ with $f=0$ almost everywhere in $\R^n\setminus\Omega$ and, up to another subsequence, $\set{\tilde{v}_\eps}$ converges weakly in $H^1(\Omega_1)$, as $\eps$ goes to $0$, to the function 
    \begin{equation*}\tilde{v}_f(z)=\begin{cases} 
v_f(z) &\text{if }z\in\Omega,\\[5 pt]
    v_f(\sigma(z))\left(1-\dfrac{d(z)}{h(z)}\right) &\text{if }z\in\Sigma_1.
\end{cases}\end{equation*}
\end{prop}
\begin{proof}
The proof follows the same scheme as the one of \autoref{teorAC}, hence we limit ourself to point out the meaningful differences.

In the first step, one assumes that the family $\set{\tilde{v}_\eps}$ converges weakly in $H^1_0(\Omega_1)$ to a function $\tilde{v}$. Then $\tilde{v}$, in $\Sigma_1$, satisfies the equation 
\begin{equation}\label{eqlimweakD}
\int_{\Sigma_1} (\nabla\tilde{v}\cdot\nu_0)(\nabla\varphi\cdot\nu_0)J_0(z)\,dz=0, \end{equation}
for every $\varphi\in H^1_0(\Sigma_1)$, where 
\[J_0(z)=\prod_{i=1}^{n-1}\dfrac{1}{1+h(z)k_i(\sigma(z))},\]
and $k_i$ are the principal curvatures of $\partial\Omega$. The solution to \eqref{eqlimweakD} is uniquely determined by the Dirichlet boundary datum on $\partial\Omega$. Namely, for every $w\in H^1(\Sigma_1)$, the solution to \eqref{eqlimweakD} with $\tilde{v}=w$ on $\partial\Omega$ is 
\[\tilde{v}(z)=w(\sigma(z))\left(1-\dfrac{d(z)}{h(z)}\right).\]

The second step is to use the uniform $H^1$ and $H^2$ density estimates 
\begin{equation}\label{BCFpoinc}\int_{\Omega_\eps}a(d)\abs{\nabla v_\eps}^2\,dx+\int_{\Omega_\eps}a(d)\abs{D^2 v_\eps}^2\,dx\le C(\Omega,h,C_\mathcal{G}))\end{equation}
proved in \cite[Lemma 1.1]{BCF80}, to show the equiboundedness in $H^1_0(\Omega_1)$ of the family $\set{\tilde{v}_\eps}$. Indeed, by direct computation
\begin{equation}\label{dircomp}\int_{\Sigma_1}\abs{\nabla \tilde{v}_\eps}^2\,dz\le C\left[\eps^{-1}\int_{\Sigma_\eps}\abs{\nabla^{\partial\Om_\eps} v_\eps}^2\,dx+\eps\int_{\Sigma_\eps}\abs{\nabla v_\eps}^2\,dx\right].\end{equation}
Using the Dirichlet boundary condition, the gradient of $v_\eps$ on $\partial\Om_\eps$ is normal to the boundary, so that $\nabla^{\partial\Om_\eps}v_\eps|_{\partial\Om_\eps}=0$ and, we can use \eqref{eq:Depspoinc2} to estimate its $L^2$ norm on $\Sigma_\eps$ as
\begin{equation}\label{nablasmall}\int_{\Sigma_\eps}\abs{\nabla^{\partial\Om_\eps} v_\eps}^2\,dx\le\eps^2C_p \int_{\Sigma_\eps}\abs{D^2 v_\eps}^2\,dx.\end{equation}
putting together \eqref{dircomp}, \eqref{nablasmall} and \eqref{BCFpoinc} we have
\[\int_{\Sigma_1}\abs{\nabla \tilde{v}_\eps}^2\,dz\le C(\Omega,h,C_\mathcal{G}).\]
The third and last step, is to notice that, if $f_{\eps_k}$ converges weakly to $f$ in $L^2$, then $v_{\eps_k}$ converges, weakly in $H^1(\Omega)$ to $v_f$. Hence, by step one and step two, up to a further subsequence $\tilde{v}_{\eps_k}$ converges weakly $H^1_0(\Om_1)$ to $\tilde{v}_f$.
\end{proof}

\section*{Appendix}
\addcontentsline{toc}{section}{Appendix}
\numberwithin{equation}{section}
\renewcommand{\thesection}{A}    
\setcounter{teor}{0}
\setcounter{equation}{0}
For the sake of completeness, in this section, we include the proof of the energy estimates used in \autoref{dev}.

\label{energy}
We now prove uniform $H^2$ for the functions $v_\eps$, solution to the weak equation 
\begin{equation}\label{weps}
\int_{\Omega_\eps} a(d)\nabla v_\eps\nabla\varphi\,dx+\beta\int_{\partial\Omega_\eps} v_\eps\varphi\,d\Hn = \int_{\Omega_\eps} f_\eps\varphi\,dx,
\end{equation}
for every $\varphi\in H^1(\Omega_\eps)$, where we recall 
\[
a(t) = \begin{cases} 
\eps & \text{if } t > 0, \\
1 & \text{if } t \leq 0.
\end{cases}
\]

 In the following, we will always assume that $\norma{h}_\infty < d_0$ and $\eps<1$, so that the distance function $d\in C^{1,1}(\Sigma_\eps)$. Let 
\[\nabla^{\partial \Om_\eps} v_\eps = \nabla v_\eps - \left(\nabla v_\eps \cdot \nu_\eps\right)\nu_\eps.\]
We prove \autoref{teor: C11energy}, of which we recall the statement.

\COneEnergy*

The regularity of $v_\eps$ is classical and well known (see, for instance, \cite[\S 4.16]{LU68}). We start by noticing that the $H^1$ energy estimate
\begin{equation}\label{H1est}
    \int_{\Om_\eps}a(d)\abs{\nabla v_\eps}^2\,dx+\beta\int_{\partial\Om_\eps}v_\eps^2\,d\Hn\le C.
\end{equation}
holds, indeed, in the notation of \autoref{spectr}, $v_\eps=\mathcal{T}_{\eps,h} f_\eps$,  hence by \eqref{boundedness norma X}
\[\int_{\Om_\eps}a(d)\abs{\nabla v_\eps}^2\,dx+\beta\int_{\partial\Om_\eps}v_\eps^2\,d\Hn\le C \norma{f_\eps}_{L^2(\Omega_\eps)}^2\]
which together with \eqref{fespbound}, gives the claim.\medskip

Following \cite{BCF80,F80}, we locally represent $\partial \Omega$ and $\partial \Omega_\eps$ uniformly with respect to $\eps$. This allows us to construct a local diffeomorphism that maps $\partial \Omega$ and $\partial \Omega_\eps$ into subsets of two parallel hyperplanes. For every $\delta > 0$ we define the cube
\[
\tilde{Q}_{\delta} = \{ y \in \mathbb{R}^n \mid |y_i| \leq \delta, \text{ for every } 1 \leq i \leq n \}.
\]
We recall the following lemma \cite[Lemma 5.3]{AC25}
\begin{lemma}\label{lemma flattering}
Let  $h \in C^{1,1}(\Gamma_{d_0})$ be a positive function such that $h(x)=h(\sigma(x))$, and let $\sigma_0 \in \partial \Omega$. There exists $\eps_0 = \eps_0(\Omega, \sigma_0)$ such that, if
\[
\eps \| h \|_{C^{0,1}} < \eps_0,
\]
then there exists an open set $V$ containing $\sigma_0$ and $\sigma_0 + \eps h(\sigma_0)\nu_0(\sigma_0)$, and there exist functions $g, k_\eps: \mathbb{R}^{n-1} \to \mathbb{R}$ such that, up to a rototranslation,
\[
\begin{aligned}
\Omega \cap V &= \left\{ (x', x_n) \,\middle|\, x_n \leq g(x') \right\} \cap V, \\
\Omega_\eps \cap V &= \left\{ (x', x_n) \,\middle|\, x_n \leq g(x') + \eps k_\eps(x') \right\} \cap V,
\end{aligned}
\]
and
\[
\begin{aligned}
\partial \Omega \cap V &= \left\{ (x', x_n) \,\middle|\, x_n = g(x') \right\} \cap V, \\
\partial \Omega_\eps \cap V &= \left\{ (x', x_n) \,\middle|\, x_n = g(x') + \eps k_\eps(x') \right\} \cap V.
\end{aligned}
\]
Moreover, the map 
\[
\Phi_{\sigma_0} : (x', x_n) \in V \mapsto \left( x', \dfrac{x_n - g(x')}{k_{\eps}(x')} \right) \in \tilde{Q}_{\delta_0},
\]
for some $\delta_0 = \delta_0(\sigma_0, \Omega) > 0$, is invertible and  there exists a positive constant $C=C(\Omega,\norma{h}_{C^{1,1}})$ such that 
    \begin{equation}
        \label{eq: kepsstime}
        \min k_\eps\ge \dfrac{1}{C}, \qquad\qquad  \norma{k_\eps}_{C^{1,1}}+\norma{\Phi}_{C^{1,1}}+\norma{\Phi^{-1}}_{C^{1,1}}\le C.
    \end{equation}
\end{lemma}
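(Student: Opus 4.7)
Since $\partial\Omega$ is $C^{1,1}$, after a rototranslation sending $\sigma_0$ to the origin and $\nu_0(\sigma_0)$ to $e_n$, one can locally parametrise $\partial\Omega$ as the graph of a function $g\in C^{1,1}(\R^{n-1})$ with $g(0)=0$ and $\nabla g(0)=0$. The layer boundary $\partial\Omega_\eps$ is the image of $\partial\Omega$ under the near-identity map $F_\eps(\sigma)=\sigma+\eps h(\sigma)\nu_0(\sigma)$. My plan is to show that the projection of $F_\eps$ onto the first $n-1$ coordinates is an $O(\eps)$-perturbation of the identity and hence invertible via a quantitative inverse function theorem; inverting this projection expresses $\partial\Omega_\eps$ as a graph over the same $(n-1)$-dimensional domain and yields the function $k_\eps$.

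\textbf{Constructing $k_\eps$.} Writing $F_\eps(x',g(x'))=(P_\eps(x'),Q_\eps(x'))$, I get
\[P_\eps(x')=x'-\eps\,\dfrac{h(x',g(x'))\nabla g(x')}{\sqrt{1+|\nabla g(x')|^2}},\qquad Q_\eps(x')=g(x')+\eps\,\dfrac{h(x',g(x'))}{\sqrt{1+|\nabla g(x')|^2}}.\]
Thus $P_\eps=\mathrm{Id}+\eps A_\eps$ with $\norma{A_\eps}_{C^{1,1}}$ controlled only by $\norma{h}_{C^{1,1}}$ and $\norma{g}_{C^{1,1}}$. A quantitative $C^{1,1}$-inverse function theorem then supplies $\eps_0=\eps_0(\Omega)$ such that, whenever $\eps\norma{h}_{C^{0,1}}\le\eps_0$, $P_\eps$ is a $C^{1,1}$-diffeomorphism between cube-neighborhoods of $0$, with inverse $\Psi_\eps$ uniformly bounded in $C^{1,1}$. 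Setting $\tilde g_\eps:=Q_\eps\circ\Psi_\eps$, the graph $\{x_n=\tilde g_\eps(x')\}$ coincides with $\partial\Omega_\eps$ over this cube, and I define $k_\eps:=(\tilde g_\eps-g)/\eps$.

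\textbf{Uniform estimates and $\Phi_{\sigma_0}$.} Writing $\Psi_\eps(x')-x'=\eps\psi_\eps(x')$ with $\psi_\eps$ uniformly bounded in $C^{1,1}$ (by the previous step), the fundamental theorem of calculus gives
\[k_\eps(x')=\Bigl(\int_0^1\nabla g\bigl(x'+t\eps\psi_\eps(x')\bigr)\,dt\Bigr)\cdot\psi_\eps(x')+\dfrac{h(\Psi_\eps(x'),g(\Psi_\eps(x')))}{\sqrt{1+|\nabla g(\Psi_\eps(x'))|^2}},\]
which yields $\norma{k_\eps}_{C^{1,1}}\le C(\Omega,\norma{h}_{C^{1,1}})$. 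Moreover, as $\eps\to 0$ one checks $k_\eps\to h\sqrt{1+|\nabla g|^2}$ uniformly, so after possibly shrinking $\eps_0$ one has $\min k_\eps\ge (\min h)/\bigl(2\sqrt{1+\norma{\nabla g}_\infty^2}\bigr)=:1/C$. Finally, $\Phi_{\sigma_0}(x',x_n)=(x',(x_n-g(x'))/k_\eps(x'))$ has the explicit inverse $(y',y_n)\mapsto(y',g(y')+k_\eps(y')y_n)$, and both maps are $C^{1,1}$ with norms controlled by $\norma{g}_{C^{1,1}}$, $\norma{k_\eps}_{C^{1,1}}$ and $(\min k_\eps)^{-1}$, all uniform in $\eps$; shrinking the neighborhood gives the cube $\tilde Q_{\delta_0}$ as image.

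\textbf{Main obstacle.} The delicate point is obtaining \emph{$\eps$-independent} $C^{1,1}$ bounds on $\Psi_\eps$ and on the scaled height difference $k_\eps=(\tilde g_\eps-g)/\eps$. This requires a form of the inverse function theorem that tracks Lipschitz seminorms quantitatively through inversion, coupled with a Taylor-expansion argument that extracts the singular $1/\eps$-factor in $k_\eps$ without losing any regularity. The smallness condition $\eps\norma{h}_{C^{0,1}}\le\eps_0$ is exactly what makes both of these steps uniform in $\eps$ and provides the positive lower bound on $k_\eps$.
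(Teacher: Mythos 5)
The paper does not prove this lemma itself; it quotes it verbatim from \cite[Lemma 5.3]{AC25}, so there is no in-paper proof to compare your argument against. Evaluated on its own terms, your proposal has a genuine regularity gap. You parametrise $\partial\Omega_\eps$ through the normal map $F_\eps(\sigma)=\sigma+\eps h(\sigma)\nu_0(\sigma)$ and then assert that $P_\eps=\mathrm{Id}+\eps A_\eps$ satisfies a $C^{1,1}$ bound on $A_\eps$ in terms of $\norma{h}_{C^{1,1}}$ and $\norma{g}_{C^{1,1}}$. But for a domain with only $C^{1,1}$ boundary, $\nabla g$ is merely Lipschitz (its second derivative lies in $L^\infty$ but is not itself Lipschitz), hence
\[
A_\eps(x')=-\dfrac{h(x',g(x'))\,\nabla g(x')}{\sqrt{1+\abs{\nabla g(x')}^2}}
\]
is only $C^{0,1}$, and the quantitative $C^{1,1}$ inverse function theorem you invoke has nothing to act on. The same obstruction is inherited by your formula for $k_\eps$, which contains $\nabla g$ as an undifferentiated factor; so the argument as written can give at best $\eps$-uniform $C^{0,1}$ control of $k_\eps$, $\Phi$ and $\Phi^{-1}$, not the bound \eqref{eq: kepsstime}. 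That bound is not cosmetic: the $H^2$ energy estimates in the appendix run a difference-quotient argument on the pulled-back coefficient matrix $A_\eps=k_\eps (D\Phi^{-1})^{-1}(D\Phi^{-1})^{-T}$, which must be uniformly Lipschitz, and that uses exactly the $C^{1,1}$ control of $k_\eps$ and $\Phi$.

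The route consistent with the paper's references \cite{BCF80,F80} is to avoid the normal map altogether and represent $\partial\Omega_\eps\setminus\partial\Omega$ as the level set $\set{d=\eps h}$ of the $C^{1,1}$ function $d-\eps h$ on $\Gamma_{d_0}$: by the assumption $h(x)=h(\sigma(x))$ one has $\nabla h\cdot\nu_0=0$, so $\abs{\nabla(d-\eps h)}=\sqrt{1+\eps^2\abs{\nabla h}^2}$ is bounded away from zero, and flattening via the implicit function theorem applied to a $C^{1,1}$ defining function keeps every object at the $C^{1,1}$ level. Your normal map $F_\eps$, by contrast, already drops to $C^{0,1}$ the moment $\nu_0$ enters. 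Your \textquotedblleft Main obstacle\textquotedblright\ paragraph correctly identifies $\eps$-uniformity as the crux, but the parametrisation you chose cannot carry the regularity that the statement (and its later use) requires; the $1/\eps$-extraction for $k_\eps$ must be carried out on the implicitly-defined graph function $G_\eps$ of the $C^{1,1}$ level set, not on a composition involving $\nu_0$.
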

For simplicity's sake, we will drop the explicit dependence on the point $\sigma_0 \in \partial \Omega$ when possible. Notice that the diffeomorphism $\Phi$ flattens  $\partial\Om$ and $\partial\Om_\eps$, in the sense that
\[
\Phi(\partial \Omega \cap V) = \{ y_n = 0 \} \cap \tilde{Q}_{\delta_0},\quad \text{and}\quad  \Phi(\partial \Omega_{\eps} \cap V) = \{ y_n = \eps \} \cap \tilde{Q}_{\delta_0}.
\]
 For every $0 < \delta < \delta_0$ we define
\[
Q_{\delta} = \Phi^{-1}(\tilde{Q}_{\delta}).
\]

\begin{oss}\label{defveps}
    Consider $v_\eps$ the solution to equation \eqref{weps}, fix \( \sigma_0 \in \partial \Omega \), and define \( w_\eps(y): = v_\eps(\Phi^{-1}(y)) \). Then, we have:
\[
w_\eps \in H^1(\{ y_n < \eps \} \cap \tilde{Q}_{\delta_0}) \cap H^2((\{ y_n < \eps \} \cap \tilde{Q}_{\delta_0}) \setminus \{ y_n = 0 \}),
\]
and, it solves the equation:
\begin{equation}\label{eqveps}
\int_{\{ y_n < \eps \} \cap \tilde{Q}_{\delta_0}} a(y_n) A_{\eps} \nabla w_\eps \cdot \nabla \varphi\, dy + \beta \int_{\{ y_n = \eps \} \cap \tilde{Q}_{\delta_0}} w_\eps \varphi J_{\eps} \, d\mathcal{H}^{n-1} =\int_{\{ y_n < \eps \} \cap \tilde{Q}_{\delta_0}} p_\eps\varphi \, dy,
\end{equation}
for all \( \varphi \in H^1_0(\tilde{Q}_{\delta_0}) \), where
\[
A_{\eps}(y) = k_{\eps}(y') (D(\Phi^{-1})(y))^{-1} (D(\Phi^{-1})(y))^{-T},\quad 
 J_{\eps}(y) = \sqrt{1 + \abs{\nabla g(y')}^2 + \eps \abs{\nabla k_{\eps}(y')}^2},
\]
and
\[p_\eps(y)=f_\eps(\Phi^{-1}(y))k_\eps(y').\]

Notice that $A_\eps$ is elliptic and bounded, uniformly in $y$ and $\eps$. Using \eqref{H1est} we have that there exists a constant $C=C(\Om,h,C_\mathcal{G},\beta,\sigma_0)$ such that

\begin{equation}\label{vH1est}
   \int_{\{ y_n < \eps \} \cap \tilde{Q}_{\delta_0}} a(y_n) \abs{ \nabla w_\eps }^2 \, dy + \beta \int_{\{ y_n = \eps \} \cap \tilde{Q}_{\delta_0}} w_\eps^2 \, d\mathcal{H}^{n-1} \leq C. 
\end{equation}
\end{oss}

Since $w_\eps\in H^2((\{ y_n < \eps \} \cap \tilde{Q}_{\delta_0}) \setminus \{ y_n = 0 \})$ and $v_\eps\in H^2((\Om_\eps\cap V)\setminus\partial V)$, for every $\delta\in(0,\delta_0)$, we can consider 
\[\Tilde{I}_{\delta,\sigma_0}:=\int_{\{y_n<\eps\}\cap \tilde{Q}_\delta}a(y_n)\abs{D^2w_\eps}^2dy+\beta\int_{\{y_n=\eps\}\cap \tilde{Q}_\delta}\abs{\nabla_{n-1}w_\eps}^2d\Hn,\]
and
\[I_{\delta,\sigma_0}:=\int_{\Om_\eps\cap Q_\delta}a(d)\abs{D^2v_\eps}^2dx+\beta\int_{\partial \Om_\eps\cap {Q}_\delta}\abs{\nabla^{\partial\Om_\eps}v_\eps}^2d\Hn,\]
where 
\[\nabla_{n-1}w_\eps=\nabla w_\eps-\dfrac{\partial w_\eps}{\partial n}\textbf{e}_n\quad \text{and}\quad \nabla^{\partial\Om_\eps}v_\eps=\nabla v_\eps-(\nabla u_\eps\cdot \nu_\eps)\nu_\eps. \]
For simplicity's sake, we will drop the explicit dependence on the point $\sigma_0 \in \partial \Omega$ when possible.
\begin{oss}\label{equivalbound}
We aim to bound $I_\delta$ uniformly. From \cite[Lemma 5.6]{AC25} we have that

\[
I_{\delta} \leq C \left( \tilde{I}_{\delta} + \int_{\{y_n < \epsilon\} \cap \tilde{Q}_{\delta}} \epsilon(y_n) |\nabla w_\eps|^2 \, dy \right),
\]
and
\[
\tilde{I}_{\delta} \leq C \left( I_{\delta}+ \int_{\Omega_{\epsilon} \cap Q_{\delta}} \epsilon(d) |\nabla v_\eps|^2 \, dx \right),
\]
for some positive constant $C=C(\Om,\norma{h}_{C^{1,1}},\sigma_0)$.
Then, using \eqref{H1est} and \eqref{vH1est},  $I_{\delta}$  is bounded if and only if $\tilde{I}_{\delta}$ is bounded.
\end{oss}

\begin{lemma}\label{H2estloc}
   Let $h\in C^{1,1}(\Gamma_{d_0})$ be a positive function such that $\eps \norma{h}_{C^{1,0}}\leq \eps_0$ and $h(x)=h(\sigma(x))$. If $\sigma_0\in \partial \Om$ and  $w_\eps$ is as in \autoref{defveps}, then there exists $C=C(\Om,\norma{h}_{C^{1,1}},C_\mathcal{G},\beta, \sigma_0)$ such that 
    \[\tilde{I}_{{\delta_0}/{2}}\leq C\]
\end{lemma}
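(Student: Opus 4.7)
The plan is to establish the local $H^2$ bound for $w_\eps$ by a tangential difference-quotient (Nirenberg) argument combined with a direct reading of the PDE in the normal direction to recover $\partial_{nn} w_\eps$. The enabling feature is \autoref{lemma flattering}: in the flattened coordinates the geometric singularity of the problem is concentrated on the two parallel hyperplanes $\{y_n=0\}$ and $\{y_n=\eps\}$, while the coefficients $A_\eps$ and $J_\eps$ are uniformly $C^{0,1}$ in $\eps$ by \eqref{eq: kepsstime}. The coefficient $\eps(y_n)$ is the only jumping quantity, and it is constant in each half-space, which is exactly what makes the scheme below work.

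First I would fix a cut-off $\eta\in C_c^\infty(\tilde Q_{\delta_0})$ with $\eta\equiv 1$ on $\tilde Q_{\delta_0/2}$, and for each tangential index $i\in\{1,\dots,n-1\}$ and $|\tau|$ small I would test \eqref{eqveps} with $\varphi=-\Delta^i_{-\tau}(\eta^2\Delta^i_\tau w_\eps)$, where $\Delta^i_\tau$ is the difference quotient in the direction $e_i$. Being a translation in a tangential direction, $\varphi\in H^1_0(\tilde Q_{\delta_0})$ for small $\tau$, does not cross the interface $\{y_n=0\}$, and preserves the boundary hyperplane $\{y_n=\eps\}$, so both the two-phase structure and the Robin term go through unchanged. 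Expanding via the discrete product rule, using uniform ellipticity of $A_\eps$, the $C^{0,1}$ bounds on $A_\eps$ and $J_\eps$, and absorbing with Young's inequality, I would obtain
\begin{equation*}
\int_{\{y_n<\eps\}\cap\tilde Q_{\delta_0}}\eps(y_n)\eta^2|\nabla\Delta^i_\tau w_\eps|^2\,dy+\beta\int_{\{y_n=\eps\}\cap\tilde Q_{\delta_0}}\eta^2|\Delta^i_\tau w_\eps|^2 J_\eps\,d\Hn\le C,
\end{equation*}
the right-hand side being controlled by $\norma{p_\eps}_{L^2}$, the $H^1$-energy \eqref{vH1est}, and $\norma{\nabla\eta}_{L^\infty}$, all uniformly in $\eps$. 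Sending $\tau\to 0$ and summing over $i=1,\dots,n-1$ gives a uniform $L^2(\tilde Q_{\delta_0/2})$ bound on $\eps(y_n)^{1/2}\nabla\partial_i w_\eps$ and on the tangential trace $\partial_i w_\eps|_{\{y_n=\eps\}}$, i.e.\ the tangential content of $\tilde I_{\delta_0/2}$.

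For the missing normal piece $\partial_{nn} w_\eps$ I would read \eqref{eqveps} in strong form on each side of the interface,
\begin{equation*}
-\divv(A_\eps\nabla w_\eps)=\dfrac{p_\eps}{\eps(y_n)}\quad\text{in }\{y_n<0\}\cap\tilde Q_{\delta_0}\text{ and in }\{0<y_n<\eps\}\cap\tilde Q_{\delta_0},
\end{equation*}
and solve algebraically for $\partial_{nn}w_\eps$, using uniform positivity of $[A_\eps]_{nn}$ to obtain
\begin{equation*}
|\partial_{nn}w_\eps|^2\le C\left(\dfrac{|p_\eps|^2}{\eps(y_n)^2}+\sum_{i=1}^{n-1}|\nabla\partial_i w_\eps|^2+|\nabla w_\eps|^2\right).
\end{equation*}
Multiplying by $\eps(y_n)$ and integrating, the lower side contributes $\int_{\{y_n<0\}}|p_\eps|^2\,dy+\cdots$, which is controlled by \eqref{fespbound} after the change of variables (since $p_\eps=f_\eps\circ\Phi^{-1}\cdot k_\eps$ and $k_\eps$ is uniformly bounded), while on the upper side the dangerous ratio $|p_\eps|^2/\eps$ is precisely compensated by \eqref{fespconc}, which yields $\int_{\{0<y_n<\eps\}}|p_\eps|^2\,dy\le C\eps$. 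Combining with Step~1 and \eqref{vH1est} produces the uniform bound on $\tilde I_{\delta_0/2}$.

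The main obstacle is not the structure of the argument but bookkeeping uniformity in $\eps$: one must verify that every constant arising from the discrete product rule, from moving the Robin boundary factor $J_\eps$ through the translation, and from solving for $\partial_{nn} w_\eps$ (where $[A_\eps]_{nn}^{-1}$ appears) is bounded independently of $\eps$. The $C^{1,1}$ bounds \eqref{eq: kepsstime} on $k_\eps$, $\Phi$, $\Phi^{-1}$ from \autoref{lemma flattering} are exactly what ensures this uniform control, and the non-concentration assumption \eqref{fespconc} is the one genuinely necessary ingredient to prevent the $\eps^{-1}$ appearing on the $\Sigma_\eps$ side from destroying the estimate.
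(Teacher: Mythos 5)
Your proposal follows essentially the same route as the paper's proof: tangential difference quotients with a cut-off $\eta^2\Delta^i_{\tau}$ tested against the two-phase weak equation \eqref{eqveps} (the paper uses $\xi^2\Delta^\eta_k$), uniform ellipticity and the $C^{1,1}$ control \eqref{eq: kepsstime} to absorb lower-order terms via Young, and then a pointwise algebraic recovery of $\partial_{nn}w_\eps$ from the strong form of the PDE using $[A_\eps]_{nn}\ge C_1$, with the singular factor $\eps(y_n)^{-1}$ on the source being exactly tamed by the non-concentration hypothesis \eqref{fespconc}. The identification of \eqref{fespconc} as the genuinely necessary input, and the observation that tangential translations commute with both the interface and the Robin boundary, are precisely the points the paper relies on (cf.\ the estimate \eqref{vnonconc} and the split into $\mathcal{I}$ and $\mathcal{J}$), so the two arguments coincide.
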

\begin{proof}
     Let $\xi \in C_c^\infty(\tilde{Q}_{\delta_0})$ be a non-negative function such that $\xi \equiv 1$ in $\tilde{Q}_{\delta_0/2}$. Choosing $|\eta|$ small enough, we have
\[
\text{supp} \xi + \eta e_i \subset \tilde{Q}_{\delta_0}
\]
for every $k = 1, \ldots, n - 1$.  In the following, for any $L^2$ function $\psi$, we denote by
\[
\Delta_k^\eta \psi(y) = \dfrac{\psi(y + \eta e_k) - \psi(y)}{\eta}
\]
the difference quotients (see \cite[\S5.8.2]{E10}). For any couple of functions $\psi_1$ and $\psi_2$ we recall that
\begin{equation}\label{prodiff}
\Delta_k^\eta (\psi_1 \psi_2)(y) = \Delta_k^\eta \psi_1(y) \psi_2(y) + \psi_1(y + \eta e_k) \Delta_k^\eta \psi_2(y),
\end{equation}
moreover, if $\psi_1$ and $\psi_2$ are measurable and
\begin{equation}\label{eqsupp}
(\text{supp} \psi_1 \cap \text{supp} \psi_2) \pm \eta e_k \subset \tilde{Q}_\delta,
\end{equation}
then, for every $k = 1, \ldots, n - 1$, it holds
\begin{equation}\label{partidifquo}
    \int_{\tilde{Q}_{\delta_0}} \psi_1(y) \Delta_k^\eta \psi_2(y) \, dy = -\int_{\tilde{Q}_{\delta_0}} \Delta_k^{-\eta} \psi_1(y) \psi_2(y) \, dy.
\end{equation}
Finally, if $U\subset \tilde{Q}_{\delta_0}$ such that $U\pm \eta e_k \subset \tilde{Q}_{\delta_0}$ and $\psi\in H^{1}(\{y_n<\eps\}\cap \tilde{Q}_{\delta_0})$ then we have 
\begin{equation}\label{diffeder}
   \int_U(\Delta^\eta_k\psi)^2\leq\int_{\tilde{Q}_{\delta_0}}|\partial_k \psi|^2. 
\end{equation}
Let \[\varphi=-\Delta^{-\eta}_k(\xi^2\Delta^\eta_kw_\eps).\] For $\abs{\eta}$ small enough, we can use  $\varphi$ as a test function in equation \eqref{eqveps}. Let 
\[\mathcal{I} := \int_{\{ y_n < \eps \} \cap \tilde{Q}_{\delta_0}} a(y_n) A_{\eps} \nabla w_\eps \cdot \nabla \varphi \, dy + \beta \int_{\{ y_n = \eps \} \cap \tilde{Q}_{\delta_0}} w_\eps \varphi J_{\eps} \, d\mathcal{H}^{n-1} \]
be the left-hand side of the equation
and let
\[\mathcal{J} := \int_{\{ y_n < \eps \} \cap \tilde{Q}_{\delta_0}}p_\eps\varphi \, dy\]
be the right-hand side.

Using standard elliptic regularity arguments (see \cite[Lemma 5.7]{AC25} for the details), there exists a constant $C=C(\Om,\norma{h}_{C^{1,1}}, \sigma_0)>0$ such that
\begin{equation}\label{dismathI}
    \begin{split}
        \mathcal{I}\geq \dfrac{C_1}{2} \int_{\{y_n < \eps\} \cap \tilde{Q}_{\delta_0}} a(y_n) \xi^2 \sum_{j=1}^{n} \left( \Delta_k^{\eta} (\partial_j w_\eps) \right)^2 \, dy - C \int_{\{y_n < \eps\}} a(y_n) |\nabla w_\eps|^2 \, dy \\
        + C\beta\int_{\{y_n = \eps\} \cap \tilde{Q}_{\delta_0}} \xi^2 (\Delta_k^\eta w_\eps)^2 \, d\Hn - C \beta \int_{\{y_n = \eps\} \cap \tilde{Q}_{\delta_0}} w_\eps^2 \, d\Hn.
    \end{split}
\end{equation}
where $C_1$ bounds from below the ellipticity constant of $A$. \\
Moreover, using Young's inequality
\[ab\le \dfrac{\gamma}{2}a^2+\dfrac{1}{2\gamma}b^2\]
with $\gamma=\dfrac{C_1a(y_n)}{4}$ we have that 
\[\mathcal{{J}}\leq \dfrac{C_1}{8}\int_{\{ y_n < \eps \} \cap \tilde{Q}_{\delta_0}}a(y_n)(\Delta^{-\eta}_k(\xi^2\Delta^{\eta}_kw_\eps))^2dy+\dfrac{2}{C_1}\int_{\{ y_n < \eps \} \cap \tilde{Q}_{\delta_0}}\dfrac{1}{a(y_n)}p_\eps^2dy \]
Using \eqref{fespbound} and \eqref{fespconc}, we obtain that for some positive constant $C=C(\Om,\norma{h}_{C^{1,1}},C_\mathcal{G},\beta, \sigma_0)$
\begin{equation}\label{vnonconc}\dfrac{2}{C_1}\int_{\{ y_n < \eps \} \cap \tilde{Q}_{\delta_0}}\dfrac{1}{a(y_n)}p_\eps^2\,dy\leq C\end{equation}
As a consequence, applying \eqref{diffeder} and Young's inequality, we have that 
\begin{equation}\label{dismathJ}
\begin{split}
    \mathcal{J}&\leq C+\dfrac{C_1}{8}\int_{\{ y_n \leq \eps \} \cap \tilde{Q}_{\delta_0}}a(y_n)(\partial_k(\xi^2\Delta_k^\eta w_\eps))^2\,dy\\
    &\leq C+\dfrac{C_1}{4}\int_{\{ y_n < \eps \} \cap \tilde{Q}_{\delta_0}} a(y_n)\big[\big(2\xi(\partial_k\xi)\Delta^\eta_kw_\eps\big)^2+\xi^4(\Delta^\eta_k\partial_kw_\eps)^2\big]\,dy\\
    &\leq C+C\int_{\{ y_n < \eps \} \cap \tilde{Q}_{\delta_0}} a(y_n)\abs{\nabla w_\eps}^2\,dy+\dfrac{C_1}{4}\int_{\{ y_n < \eps \} \cap \tilde{Q}_{\delta_0}} a(y_n) \xi^2\sum_{j=1}^n(\Delta^\eta_k\partial_j w_\eps)^2\,dy.
\end{split}
\end{equation}
Combining \eqref{eqveps}, \eqref{dismathI} and \eqref{dismathJ} and using that $\xi=1$ on $\tilde{Q}_{\delta_0/2}$ we have that 
\begin{equation}
    \begin{split}
 \mathcal{K}&:=\dfrac{C_1}{4} \int_{\{y_n < \epsilon\} \cap \tilde{Q}_{\delta_0 / 2}} a(y_n) \sum_{j=1}^n (\Delta_k^\eta \partial_j w_\eps)^2 \, dy 
+ C \int_{\{y_n = \epsilon\} \cap \tilde{Q}_{\delta_0 / 2}} (\Delta_k^\eta w_\eps)^2 \, d\Hn \\
&\leq C     \bigg( \int_{\{y_n < \eps\} \cap \tilde{Q}_{\delta_0}} a(y_n) |\nabla w_\eps|^2 \, dy 
+ \beta \int_{\{y_n = \eps\} \cap \tilde{Q}_{\delta_0}} w_\eps^2 \, d\Hn +1\bigg).
    \end{split}
\end{equation}
Then, by \eqref{vH1est}, the right-hand side of the previous inequality is bounded, we have that for every $ k = 1, \ldots, n - 1,$  $\partial_kw_\eps\in H^1(\{y_n<\eps\}\cap \tilde{Q}_{\delta_0 / 2})$  and
\begin{equation}\label{H2esttan}
\int_{\{ y_n < \eps\} \cap \tilde{Q}_{\delta_0 / 2}} a(y_n) \sum_{j=1}^n (\partial_k \partial_j w_\eps)^2 \, dy +\int_{\{y_n = \epsilon\} \cap \tilde{Q}_{\delta_0 / 2}} \abs{\nabla_{n-1} w_\eps}^2 \, d\Hn \leq C
\end{equation}
We can now estimate $\partial_{nn}^2w_\eps$. From \eqref{eqveps}, we have that almost everywhere 
\[
     -\sum_{\substack{1\le i,j\le n \\ (i,j)\ne(n,n)}}a(y_n)\partial_i(a_{ij}\partial_{j}w_\eps)-a(y_n)\partial_n a_{nn}\partial_n w_\eps-a(y_n)a_{nn}\partial^2_{nn}w_\eps=p_\eps.
    \]
    From the uniform ellipticity of $A_\eps$, we have that $a_{nn}=A_\eps\mathbf{e}_n\cdot\mathbf{e}_n\ge C_1$, hence $\partial_{nn}^2w_\eps$ can be estimated in terms $A_\eps$, the other derivatives of $w_\eps$ and $p_{\eps}$. Namely we have 
    \begin{equation}\label{H2estnormal}\begin{split}
    \int_{\set{y_n<\eps}\cap \tilde{Q}_{\delta_0/2}} a(y_n) \abs{\partial_{nn}^2w_\eps}^2\,dy\le C\Bigg(&\int_{\set{y_n<\eps}\cap\tilde{Q}_{\delta_0/2}}a(y_n)\sum_{k=1}^{n-1}{\abs{\nabla \partial_k v}}^2\,dy \\[7pt] &+\int_{\set{y_n<\eps}\cap\tilde{Q}_{\delta_0/2}}a(y_n)\abs{\nabla v}^2\,d\Hn + \int_{\set{y_n<\eps}\cap \tilde{Q}_{\delta_0/2}} \dfrac{1}{a(y_n)}p_\eps^2\,dy \Bigg),
    \end{split}\end{equation}
    for some constant $C=C(\Om,h,\beta,\sigma_0)$. Finally, joining \eqref{H2esttan}, \eqref{H2estnormal}, \eqref{vH1est},  \eqref{fespbound}, and \eqref{fespconc} we have
    \[\tilde{I}_{\delta_0/2}\le C.\]
\end{proof}
We can now prove the following theorem.

\begin{proof}[Proof of \autoref{teor: C11energy}]
    Let $v_\eps$ be the solution to \eqref{weps}. Recall the definition of 
    \[I_{\delta,\sigma_0}=\int_{\Om_\eps\cap Q_\delta}a(d)\abs{D^2w_\eps}^2dx+\beta\int_{\partial \Om_\eps\cap {Q}_\delta}\abs{\nabla^{\partial\Om_\eps}w_\eps}^2d\Hn.\]
    From \autoref{equivalbound} and \autoref{H2estloc}, we have that for every $\sigma_0\in\partial\Omega$ there exists $\eps_0=\eps_0(\Om,\sigma_0)>0$, $\delta_0=\delta_0(\Om,\sigma_0)>0$ and a constant $C=C(\Om,h,\beta,\sigma_0)$, such that 
    \[I_{\delta_0/2,\sigma_0}\le C.\]
    From the boundedness of $\Om$ we have that there exists a finite number $\sigma_1,\sigma_2,\dots,\sigma_m\in\partial\Om$ with associated $\eps_i=\eps(\Om,\sigma_i)$, $\delta_i=\delta(\Om,\sigma_i)$  and $C_i=C(\Om,h,C_\mathcal{G},\beta,\sigma_i)$ such that 
    \[\Sigma_\eps\subset\bigcup_{i=1}^m \Phi^{-1}_{\sigma_i}(\tilde{Q}_{\delta_i/2})=:V_0\]
    for every $\eps<\eps_0(\Om):=\min\set{\eps_1,\eps_2,\dots,\eps_m}$ and
    \begin{equation}\label{locest}I_{\delta_i/2,\sigma_i}\le C_i,
    \end{equation}
    for every $i=1,2,\dots,m$.
    Let $U$ be an open set such that $U\subset\subset\Om$ and $\Om_\eps\subset U\cup V_0$. By standard elliptic regularity arguments, and by \eqref{fespbound}, there exists $C_0=C(\Om,C_\mathcal{G})>0$ such that
    \[\int_U\abs{D^2 v_\eps}^2\,dx\le C_0.\]
    Finally, summing the previous estimate to the estimates \eqref{locest}, we have 
    \[\int_{\Omega_\eps} a(d)\abs{D^2 v_\eps}^2\,dx+\beta\int_{\partial\Omega_\eps} \abs{\nabla^{\partial\Omega_\eps} v_\eps}^2\,d\Hn\le \sum_{i=0}^{m}C_i=:C(\Om,h,C_\mathcal{G},\beta).\]
\end{proof}

\subsubsection*{Acknowledgements} 
The authors would like to thank Roberto Ognibene for the helpful advice he provided.

The authors are members of Gruppo Nazionale per l’Analisi Matematica, la Probabilità e le loro Applicazioni
(GNAMPA) of Istituto Nazionale di Alta Matematica (INdAM). 

The author Emanuele Cristoforoni was partially supported by the INdAM - GNAMPA Project, 2025,
”Esistenza, unicità, simmetria e stabilità per problemi ellittici nonlineari e nonlocali”, CUP\_E5324001950001.


\subsubsection*{Statements and Declarations}
All authors declare that they have no conflicts of interest.

\printbibliography[heading=bibintoc]

\Addresses
\end{document}